\numberwithin{equation}{section}
\renewcommand\d{\partial}
\def\eps{\varepsilon }
\renewcommand\d{\partial}
\newcommand\R{\mathbb R}
\newcommand\C{\mathbb C}
\def\eps{\varepsilon}
\newcommand\br{\begin{remark}}
\newcommand\er{\end{remark}}
\newcommand\bp{\begin{pmatrix}}
\newcommand\ep{\end{pmatrix}}
\newcommand{\be}{\begin{equation}}
\newcommand{\ee}{\end{equation}}
\newcommand\ba{\begin{equation}\begin{aligned}}
\newcommand\ea{\end{aligned}\end{equation}}
\newcommand{\qut}[1]{\textquotedblleft #1\textquotedblright}
\newcommand{\kUnstableLowerLeft}{0.199910210210210}
\newcommand{\kUnstableUpperRight}{0.999999999997}
\newcommand{\bap}{\begin{app}}
\newcommand{\eap}{\end{app}}
\newcommand{\begs}{\begin{exams}}
\newcommand{\eegs}{\end{exams}}
\newcommand{\beg}{\begin{example}}
\newcommand{\eeg}{\end{exaplem}}
\newcommand{\bpr}{\begin{proposition}}
\newcommand{\epr}{\end{proposition}}
\newcommand{\bt}{\begin{theorem}}
\newcommand{\et}{\end{theorem}}
\newcommand{\bc}{\begin{corollary}}
\newcommand{\ec}{\end{corollary}}
\newcommand{\bl}{\begin{lemma}}
\newcommand{\el}{\end{lemma}}
\newcommand{\bd}{\begin{definition}}
\newcommand{\ed}{\end{definition}}
\newcommand{\brs}{\begin{remarks}}
\newcommand{\ers}{\end{remarks}}
\newcommand{\Id}{{\rm Id }}
\newcommand{\I}{{\rm I}}
\newcommand{\diag}{{\rm diag }}
\newtheorem{theorem}{Theorem}[section]
\newtheorem{proposition}[theorem]{Proposition}
\newtheorem{corollary}[theorem]{Corollary}
\newtheorem{lemma}[theorem]{Lemma}
\theoremstyle{remark}
\newtheorem{remark}[theorem]{Remark}
\theoremstyle{definition}
\newtheorem{definition}[theorem]{Definition}
\newtheorem{example}[theorem]{Example}
\newcommand{\RM}{\mathbb{R}}
\newcommand{\CM}{\mathbb{C}}
\newcommand{\cn}{\operatorname{cn}}
\newcommand{\ac}{\text{\rm{ac}}}
\newcommand{\beq}{\begin{equation}}
\newcommand{\eeq}{\end{equation}}
\newcommand{\atau}{\alpha_{\tau}}
\newcommand{\au}{\alpha_{u}}
\newcommand{\ak}{\alpha_{k}}
\renewcommand{\ac}{\alpha_{c}}
\title{Stability of viscous St. Venant roll-waves: from
 onset to infinite-Froude number limit}
\author{Blake Barker}
\address {Brown University, Providence, RI 02912}
\email{Blake\underline{ }Barker@brown.edu}
\thanks{Research of B.B. was partially supported
under NSF grants no. DMS-0300487, DMS-0801745, and CNS-0723054.}
\author{Mathew A. Johnson}
\address{University of Kansas, Lawrence, KS 66045}
\email{matjohn@ku.edu}
\thanks{Research of M.J. was partially supported under NSF grant no. DMS-1211183.
 }
\author{Pascal Noble}
\address{Institut de Math\'ematiques de Toulouse, Toulouse, France}
\email{pascal.noble@math.univ-toulouse.fr}
\thanks{Research of P.N. was partially supported by the French ANR Project no.
ANR-09-JCJC-0103-01.}
\author{L.Miguel Rodrigues}
\address{Universit\'e Lyon 1, Institut Camille Jordan, INRIA \'EP Kaliffe, Villeurbanne, France}
\email{rodrigues@math.univ-lyon1.fr}
\thanks{Research of M.R. was partially supported by the ANR project
BoND ANR-13-BS01-0009-01.}
\author{Kevin Zumbrun}
\address{Indiana University, Bloomington, IN 47405}
\email{kzumbrun@indiana.edu} 
\thanks{Research of K.Z. was partially supported
under NSF grant no. DMS-0300487.}
\begin{document}

\begin{abstract}
We study the spectral stability of roll-wave solutions of the viscous St. Venant equations modeling inclined shallow-water flow,
both at onset in the small-Froude number or ``weakly unstable'' limit
$F\to 2^+$ and for general values of the Froude number $F$, including the limit $F\to +\infty$. In the former, $F\to 2^+$, limit,
the shallow water equations are formally approximated by a Korteweg de Vries/Kuramoto-Sivashinsky (KdV-KS) equation 
that is a singular perturbation of the standard Korteweg de Vries (KdV) equation
modeling horizontal shallow water flow. Our main analytical result is to rigorously validate this formal limit, showing that 
stability as $F\to 2^+$ is equivalent to stability of the corresponding
KdV-KS waves in the KdV limit. Together with recent results obtained 
for KdV-KS by Johnson--Noble--Rodrigues--Zumbrun and Barker, 
this gives 
not only the first rigorous verification of stability for any single 
viscous St. Venant roll wave, but
a complete classification of stability in the weakly unstable limit. 
In the remainder of the paper, we investigate numerically and analytically the  
evolution of the stability diagram as Froude number increases to infinity. Notably, we find transition at around $F=2.3$ from weakly unstable to different, large-$F$ behavior, with stability determined by simple power law relations. 
The  latter stability criteria are potentially useful in hydraulic engineering 
applications, for which typically $2.5\leq F\leq 6.0$.
\end{abstract}

\date{\today}
\maketitle


\section{Introduction}\label{s:introduction}
In this paper, we investigate the stability of periodic wavetrain,
or {\it roll-wave}, solutions of the inclined viscous shallow-water equations
of St. Venant, appearing in nondimensional Eulerian form as
\begin{equation}\label{swe}
\displaystyle
\partial_t h+\partial_x (hu)=0,\quad \partial_t (hu)+\partial_x \left(hu^2+\frac{h^2}{2F^2}\right)=h-|u| u
+\nu\partial_x(h\partial_x u),
\end{equation} 
where $F$ is a Froude number, given by the ratio between (a chosen reference) speed of the fluid 
and speed of gravity waves, and $\nu=R_e^{-1}$, with $R_e$ the Reynolds number of the fluid.
System \eqref{swe} describes the motion of a thin layer of fluid flowing down an inclined plane,
with $h$ denoting fluid height, $u$ fluid velocity averaged with respect
to height, $x$ longitudinal distance along the plane, and $t$ time.
The terms $h$ and $|u|\,u$ on the righthand side of the second
equation model, respectively, gravitational force and turbulent friction along the bottom.\footnote{For simplicity, henceforth we restrict to cases where $u\geq0$ and write the latter term simply as $u^2$.}

Roll-waves are well-known hydrodynamic instabilities 
of \eqref{swe}, arising in the region $F>2$ for which constant solutions, 
corresponding to parallel flow, are unstable.
They appear in the modeling of such diverse
phenomena as landslides, river and spillway flow, and the topography of
sand dunes and sea beds;
see Fig. \ref{fig_phy} (a)-(b) for physical examples of roll waves and
Fig. \ref{fig_phy} (c)
for a typical wavetrain solution of \eqref{swe}.
As motivated by these applications, their stability properties have been studied
formally, numerically, and experimentally in various physically interesting regimes;
see, for example, \cite{BM} for a useful survey of this literature.
However, up until now, there has been no complete rigorous stability 
analysis of viscous St. Venant roll-waves either at the linear (spectral) or nonlinear level.

Recently, the authors, in various combinations, have developed a theoretical framework for the study of nonlinear stability of these and related periodic waves. Specifically, for the model at hand, it was shown in \cite{JZN} that, under standard {\it diffusive spectral stability assumptions} (conditions (D1)--(D3) in \S \ref{s:conditions}) together with a 
technical ``slope condition'' (\eqref{e:Eslope} below)
satisfied for ``moderate'' values $2<F\lessapprox 3.5$ of $F$
and a genericity assumption ((H1) below) satisfied almost everywhere
in parameter space,\footnote{Indeed, this appears numerically
 to be satisfied for all profiles.}
{\it roll-waves are nonlinearly stable in the sense that localized perturbations converge to 
localized spatial modulations of the background periodic wave.}
See also \cite{BJNRZ1,BJNRZ2} for discussions in the related context of 
the Kuramoto--Sivashinsky equation \cite{K,KT,Si1,Si2}. 
More recently, for general (partially) parabolic systems, detailed nonlinear asymptotic behavior under localized and
nonlocalized perturbations has been established in \cite{JNRZ2} in terms of certain formal modulation, 
or ``Whitham,'' equations.\footnote{See \cite{OZ3} for the easier multidimensional case, in which behavior is asymptotically linear due to faster decay of the linearized propagator.}

This reduces the study of stability and asymptotic behavior, at least for 
moderate values of $F$, 
to verification of the spectral stability conditions (D1)--(D3), concerning Floquet spectrum of the associated eigenvalue ODE.
However, 
it is in general a hard problem to verify such spectral assumptions analytically. 
Indeed, up to now, spectral stability has not been rigorously verified for 
{\it any} roll wave solution of the viscous St. Venant equations \eqref{swe}.

\begin{figure}[htbp]
\begin{center}
$
\begin{array}{lll}
(a) \includegraphics[scale=.3]{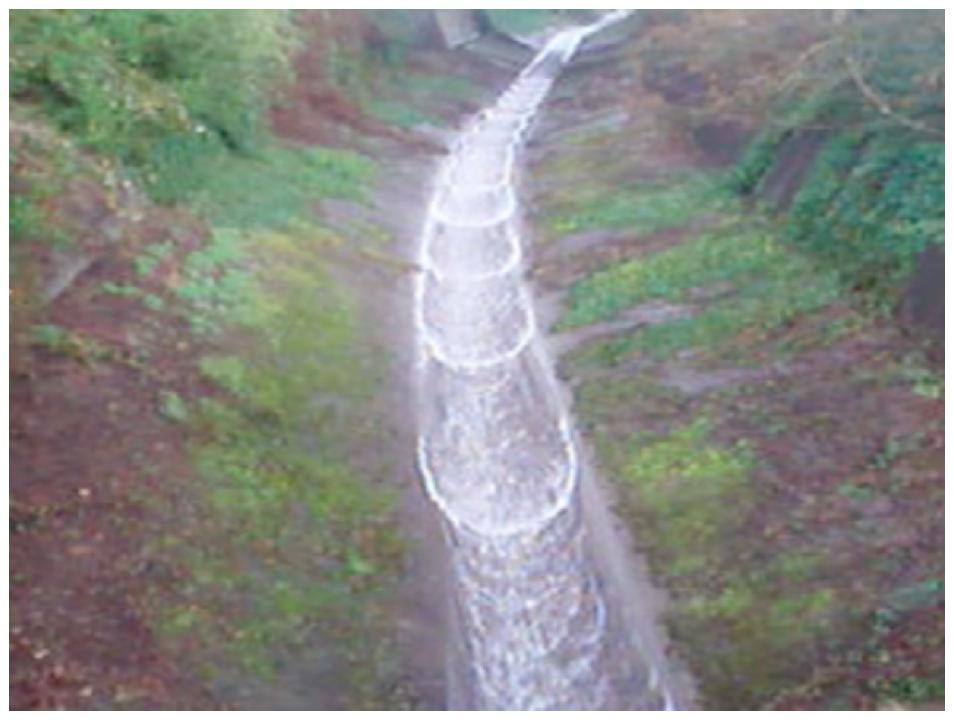}
   & (b)\ \includegraphics[scale=.35]{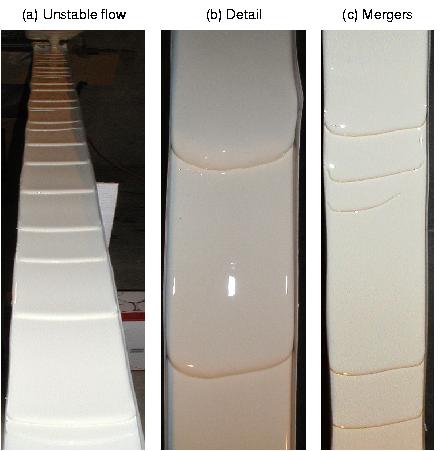}&
 (c) \includegraphics[scale=.3]{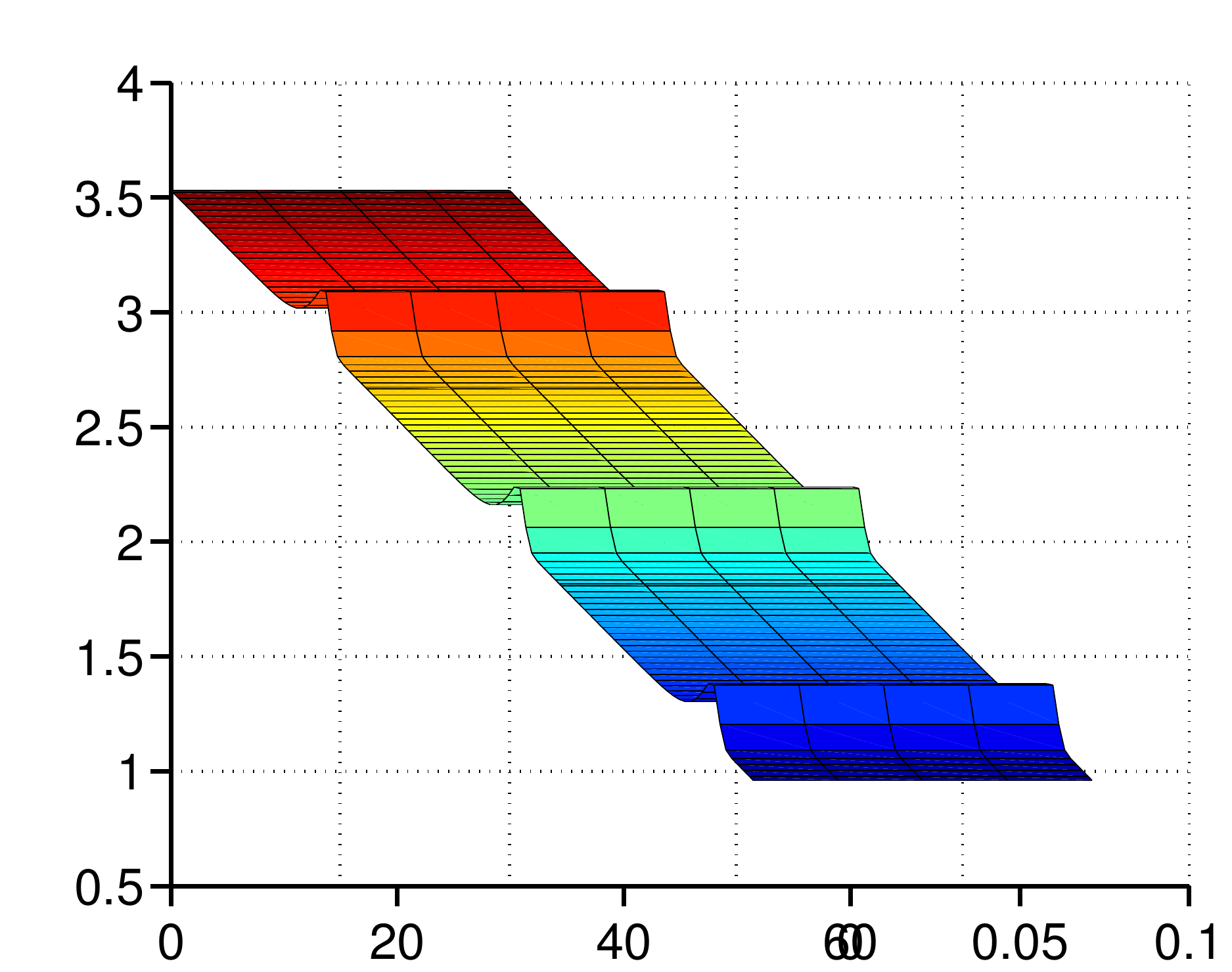}
\end{array}
$
\end{center}
\caption{Roll waves (a) on a spillway and (b) in the lab: 
pictures courtesy of Neil Balmforth, UBC. (c) Periodic profile of \eqref{swe}, 
$F = \sqrt{6}$, $\nu = 0.1$, $q = 1.5745$, $X = 17.15$.  For better comparison to experiment, we extended the profile here as constant in transverse direction.}
\label{fig_phy}\end{figure}

In some particular 
situations,
for example, at the onset of hydrodynamical instability, 
analytical proof of spectral stability
may be possible using perturbation techniques. However, most of the known examples concern reaction diffusion equations and related models like the Swift Hohenberg equations, Rayleigh B\'enard convection or  Taylor Couette flows that are all described, near the instability threshold of a background constant solution, by a Ginzburg-Landau equation derived as an amplitude equation \cite{M3}. Associated with classical Hopf bifurcation, this normal form may be rigorously validated in terms of existence and stability
by Lyapunov-Schmidt reduction about a limiting constant-coefficient operator \cite{CE,M1,M2}.

By contrast, the corresponding model for onset of hydrodynamic (roll-wave) instability in \eqref{swe}
is, at least formally, the {\it Korteweg--de Vries/Kuramoto--Sivashinsky equation} (KdV-KS)
\begin{equation}\label{kdv-ks}
\displaystyle
\partial_S v+v\partial_Y v+\varepsilon\partial_Y^3 v+\delta\big(\partial_Y^2 v+\partial_Y^4 v\big)=0,\quad \forall S>0,\forall Y\in\R,
\end{equation}
\noindent
with $0<\delta\ll 1$, $\eps>0$, a singular perturbation of the Korteweg-de Vries (KdV) equation\footnote{Without loss of generality, one can assume that $\eps^2+\delta^2=1$. See \cite{BJNRZ2}.}. Equation \eqref{kdv-ks} is derived as an amplitude equation for the shallow water system \eqref{swe} near the critical value $F=2$ above which steady constant-height flows are unstable, in the small-amplitude limit $h=\bar h+\delta^2 v$ and in the KdV time and space scaling $(Y,S)=(\delta (x-c_0 t),\delta^3 t)$ with $\delta=\sqrt{F-2}$, where $c_0$ is an appropriate
reference wave speed: see Section \ref{s:existence} below for details in the Lagrangian formulation. Alternatively, it may be derived from the full Navier-Stokes equations with free boundary from which \eqref{swe} is derived in the shallow-water limit, for
Reynolds number $R$ near the critical value $R_c$ above which steady Nusselt flows are unstable;
see \cite{Wi,YY}.

In 
this
case, neither existence nor stability reduce to computations involving constant-coefficient operators; rather, the reference states are arbitrary-amplitude periodic solutions of KdV, and the relevant operators (variable-coefficient) linearizations thereof.
{\it This makes behavior considerably richer, and both analysis and validation of the amplitude equations considerably more complicated
than in the Ginzburg--Landau case mentioned above.} Likewise, onset occurs for \eqref{swe} not through Hopf bifurcation from a single
equilibrium, but through Bogdanev--Takens, or {\it saddle-node} bifurcation involving collision of two equilibria, as discussed, e.g., in \cite{HC,BJRZ}, with limiting period thus $+\infty$, consistent with the $1/\delta$ spatial scaling of the formal model.
(The standard unfolding of a Bogdanev--Takens bifurcation as a perturbed Hamiltonian system is also consistent with KdV-KS; see Remark \ref{btrmk}.)

Nevertheless, similarly as in previous works by Mielke \cite{M1,M2} in the reaction diffusion setting, where the stability of periodic waves for the amplitude (Ginzburg-Landau) equation provides a stability result for periodic waves of the full system (Swift Hohenberg equation or Rayleight B\'enard convection), we may expect that stability for the amplitude equation, here the KdV-KS equation \eqref{kdv-ks} will provide some information on the stability of periodic waves for the viscous St. Venant system \eqref{swe}, at least in the weakly-unstable limit $F\to 2^+$. 
Our first main goal is to 
rigorously validate
this conjecture, showing that stability of roll-waves in the weakly unstable limit $F\to 2^+$ is determined by stability of corresponding solutions of \eqref{kdv-ks} under the rescaling described above. Together with previous results \cite{BN,JNRZ1,B} on stability for \eqref{kdv-ks}, this gives the first complete nonlinear stability results for roll waves of \eqref{swe}: more, {\it it gives a complete classification of stability in the weakly unstable limit.}

This gives at the same time a rigorous justification in a particular instance of the much more generally applicable and better-studied
\eqref{kdv-ks} as a canonical model for weak hydrodynamic instability in inclined thin-film flow; see, e.g., \cite{BN,CD,CDK,PSU}.
Looked at from this opposite point of view, \eqref{swe} gives an interesting extension in a specific case
of \eqref{kdv-ks} into the large-amplitude, strongly-unstable regime. Our second main goal is, by a combination of rigorous analysis and (nonrigorous but numerically well-conditioned) numerical experiment, to continue our analysis into this large-amplitude regime,
performing a systematic stability analysis for $F$ on the entire range of existence $F>2$ of periodic roll-wave
solutions of \eqref{swe}.
Our main finding here is a remarkably simple power-law description of curves bounding the region of stability in parameter space from above and below, across which particular high-frequency and low-frequency stability transitions
occur.
These curves eventually meet, yielding instability for $F$ sufficiently large. The large-$F$ description is quite different from the small-$F$ description of weakly unstable theory; indeed, there is a dramatic transition from small- to large-$F$ behavior at $F\approx2.3$, with behavior governed thereafter by the large-$F$ version. This distinction appears important for hydraulic engineering applications, where $F$ is typically $2.5-6.0$ and sometimes $10-20$ or higher \cite{Je,Br1,Br2,A,RG1,RG2,FSMA}.
Far from the onset, nonlinear stability for spectrally stable waves remains an interesting open question due to failure of the
technical (slope) condition \eqref{e:Eslope}; as discussed below, there
is reason to think this may be dropped.


\subsection{Summary of previous work}\label{s:previous}
We begin by recalling some known results that will be relied upon throughout our analysis. In particular,  we begin by recalling how spectral stability (in a suitable diffusive sense) may provide a detailed nonlinear stability result, a fact that strongly underpins and motivates our spectral studies.  We then recall the relevant numerical and analytical results for the amplitude equation \eqref{kdv-ks}, upon which our entire weakly-unstable analysis for $0<F-2\ll 1$ hinges.

\subsubsection{Diffusive spectral stability conditions}\label{s:conditions}
We first recall the standard diffusive spectral stability conditions as defined in various contexts in, for example, \cite{S1,S2,JZ1,JZ2,JZN,BJNRZ2,JNRZ2}.

Let $u(x,t)=\bar u(x-ct)$ define a spatially periodic traveling-wave solution of a general 
partial differential equation $\partial_t u=\mathcal{F}(u)$ with period (without loss of generality) one, or, equivalently, $\bar u$ be a stationary solution of $\partial_t u=\mathcal{F} + c\partial_x u$ with period one,
and let $L:=(d\mathcal{F}/du)(\bar u)+c\partial_x$ denote the associated linearized operator
about $\bar u$.  
As $L$ is a linear differential operator with $1$-periodic coefficients, standard results from Floquet theory dictate that non-trivial solutions
of $Lv=\lambda v$ can not be integrable on $\RM$, 
more generally
they can not have finite norm in $L^p(\mathbb{R})$ for any $1\leq p<\infty$.  
Indeed, it follows by standard arguments that the $L^2(\RM)$-spectrum of $L$ is purely continuous and that $\lambda\in \sigma_{L^2(\RM)}(L)$ if and only if the spectral problem $Lv=\lambda v$ has an $L^\infty(\RM)$-eigenfunction of the form
\[
v(x;\lambda,\xi)=e^{i\xi x}w(x;\lambda, \xi)
\]
for some $\xi\in[\pi,\pi)$ and $w\in L^2_{\rm per}([0,1])$; see \cite{G} or \cite[p.30-31]{R} for details.
In particular, $\lambda\in\sigma_{L^2(\RM)}(L)$ if and only if there exists a $\xi\in[-\pi,\pi)$ such
that there is a non-trivial $1$-periodic solution of the equation
\[
L_\xi w=\lambda w,\quad\textrm{where}\quad \left(L_\xi w\right)(x):=e^{-i\xi x}L\left[e^{i\xi\cdot}w(\cdot)\right](x).
\]
and
\[
\sigma_{L^2(\RM)}(L)=\sigma_{L^\infty(\RM)}(L)=\bigcup_{\xi\in[-\pi,\pi)}\sigma_{L^2_{\rm per}([0,1])}(L_\xi).
\]
The parameter $\xi$ is referred to as the Bloch or Floquet frequency and the operators $L_\xi$ are the Bloch operators associated to $L$.  Since the Bloch operators have compactly embedded domains in $L^2_{\rm per}([0,T])$ their spectrum consists entirely of discrete eigenvalues that depend continuously on the Bloch parameter $\xi$.  
Thus, the spectrum of $L$ consists entirely of $L^\infty(\RM)$-eigenvalues and may be decomposed into countably many curves
$\lambda(\xi)$ such that $\lambda(\xi)\in\sigma_{L^2_{\rm per}([0,1])}(L_\xi)$ for $\xi\in[-\pi,\pi)$.

Suppose, further, that $\bar u$ is a transversal\footnote{In a sense compatible with the algebraic structure of the system.} orbit of the traveling-wave ODE $\mathcal{F}(u)+c\partial_x u=0$.  Then near $\bar u$, the Implicit Function Theorem 
guarantees
a smooth manifold of nearby $1$-periodic traveling-wave solutions of 
(possibly) different speeds, with some dimension
$N\in\mathbb{N}$,\footnote{For both \eqref{swe} and \eqref{kdv-ks}, an easy dimensional count gives
$N=2$ \cite{JZN,BJNRZ3} because of the presence of one local conservation law in the respective sets of equations.} 
not accounting for invariance under translations. Then, the \emph{diffusive spectral stability conditions are}:

\begin{enumerate}
  \item[{(D1)}]
$\sigma_{L^2(\RM)}(L)\subset\{\lambda\ |\ \Re \lambda<0\}\cup\{0\}$.
  \item[{(D2)}]
There exists a $\theta>0$ such that for all $\xi\in[-\pi,\pi)$ we have
$\sigma_{L^2_{\rm per}([0,1])}(L_{\xi})\subset\{\lambda\ |\ \Re \lambda\leq-\theta|\xi|^2\}$.
  \item[{(D3)}]
$\lambda=0$ is an eigenvalue of $L_0$ with generalized eigenspace $\Sigma_0\subset L^2_{\rm per}([0,1])$ of dimension $N$.
\end{enumerate}

Under mild additional technical hypotheses to do with regularity of the coefficients of $\mathcal{F}$, 
hyperbolic-parabolic structure, etc., conditions (D1)--(D3) have been shown in all of the above-mentioned
settings --- in particular for periodic waves of either \eqref{swe} or
\eqref{kdv-ks} --- to imply {\it nonlinear modulation stability, at Gaussian rate}:
more precisely, provided $\|(\tilde u-\bar u)|_{t=0}\|_{L^1(\RM)\cap H^s(\RM)}$ is sufficiently small for some $s$ sufficiently large, 
there exists a function $\psi(x,t)$
with $\psi(x,0)\equiv 0$ such that
the solution satisfies
\be\label{gauss}
\|\tilde u(\cdot,t)-\bar u(\cdot-\psi(\cdot,t)-ct)\|_{L^p(\RM)}
\ +\ 
\| \nabla_{x,t}\psi(\cdot,t) \|_{L^{p}(\RM)}\leq C(1+t)^{-\frac 12 (1-1/p)},
\qquad 2\leq p\leq \infty,
\ee
valid for all $t>0$;
see \cite{JZ1,JZ2,JZN,BJNRZ2,JNRZ2}. In the case of \eqref{swe}, \eqref{kdv-ks}, for which coefficients depend analytically on the solution, 
essentially there suffices the single technical hypothesis:
\begin{enumerate}
  \item[{(H1)}]
The $N$ zero eigenvalues of $L_0$ split linearly as $\xi$ is varied with $|\xi|$ sufficiently small, in the sense that they may be expanded as
$\lambda_j(\xi)= \alpha_j \xi + o(\xi)$ for some constants $\alpha_j\in\CM$ distinct.
\end{enumerate}

We note that, since the existence of the expansion $\lambda_j(\xi)= \alpha_j \xi + o(\xi)$ in (H1) may be proved independently, the hypothesis (H1) really concerns distinctness of the $\alpha_j$, which is equivalent to the condition that the characteristics of a (formally) related first-order Whitham modulation system
be distinct, a condition that, in the case of analytic dependence of the underlying equations, as here, either holds generically with respect to nondegenerate parametrizations of the manifold of periodic traveling waves, or else uniformly fails. 
For \eqref{swe}, there is an additional {\it slope condition}
\be\label{e:Eslope}
h_x/h<(c\nu F)^{-2},
\ee
used to obtain hyperbolic-parabolic damping and high-frequency resolvent estimates by Kawashima-type energy estimates, necessary to obtain
the desired nonlinear modulational stability result; see \cite[Section 4.3]{JZN}.
Condition \eqref{e:Eslope} is known to be sufficient but a priori not necessary. As discussed in Section \ref{s:discussion}, we believe that this can be replaced by its average, $0<(c\nu F)^{-2}$, hence dropped\footnote{See the Authors Note at the end of Section \ref{s:discussion} for recent progress in this direction.}. However condition \eqref{e:Eslope} holds evidently in the small-amplitude limit $F\to 2^+$, and is observed numerically for moderate values $2<F\lessapprox 3.5$. 
The above
nonlinear stability results motivate a detailed analytical inspection of the conditions (D1)-(D3) and 
(H1), 
which is precisely the intent of the weakly nonlinear
analysis presented in Section \ref{s:F-->2} below.  We note that these conditions may be  readily checked numerically, in  a well-conditioned way, using either {\it Hill's method} (Galerkin approximation), or {\it numerical Evans function analysis} (shooting/continuous orthogonalization); 
see \cite{BJNRZ1,BJNRZ2,BJNRZ3}.

\subsubsection{Numerical evaluation for viscous St. Venant and KdV-KS}\label{s:KdVKS}
The diffusive spectral stability conditions (D1)--(D3) have been studied numerically for the viscous St. Venant equations \eqref{swe} in \cite{BJNRZ3} 
for certain ``typical'' waves and Froude numbers $F$, with results indicating existence of both stable
%
and unstable waves: more precisely, the existence of a single ``band'' of stable waves as period is varied for fixed $F$.
This echoes the much earlier numerical study of roll-waves of the classical Kuramoto--Sivashinsky equation (KS) ($\eps=0$ for \eqref{kdv-ks}) in \cite{FST} and elsewhere, that obtained similar results.

Equations \eqref{kdv-ks} have received substantially more attention, as canonical models for hydrodynamical instability in a variety of thin-film settings; as derived formally in \cite{CDK}, see also \cite[p.16, footnote~10]{R}, the model \eqref{kdv-ks} with the addition of a further term $D(v_Y^2)_Y$, $D$ constant, gives a general form for such instabilities in the weakly unstable regime. A systematic numerical study of this more general model was carried out in \cite{CDK}, across all values of $\eps$, $\delta$, $D$, and the period $X$ of the wave, and, by different methods in \cite{BJNRZ2}, for the value $D=0$ only; see Figure \ref{f:islands}, reprinted from \cite{BJNRZ2} (in close agreement also with the results of \cite{CDK}). As noted in \cite{CDK}, it may be observed from Figure 2 that the small stable band for $\eps/\delta \ll 1$ enlarges with addition of dispersion/decrease in $\delta$, reaching its largest size at $\delta/\eps=0$ (corresponding to the singular KdV) limit. For intermediate ratios of $\delta/\eps$, behavior can be considerably more complicated, with bifurcation to multiple stable bands as this ratio is varied.

\begin{figure}[htbp]
\begin{center}
\includegraphics[scale=.35]{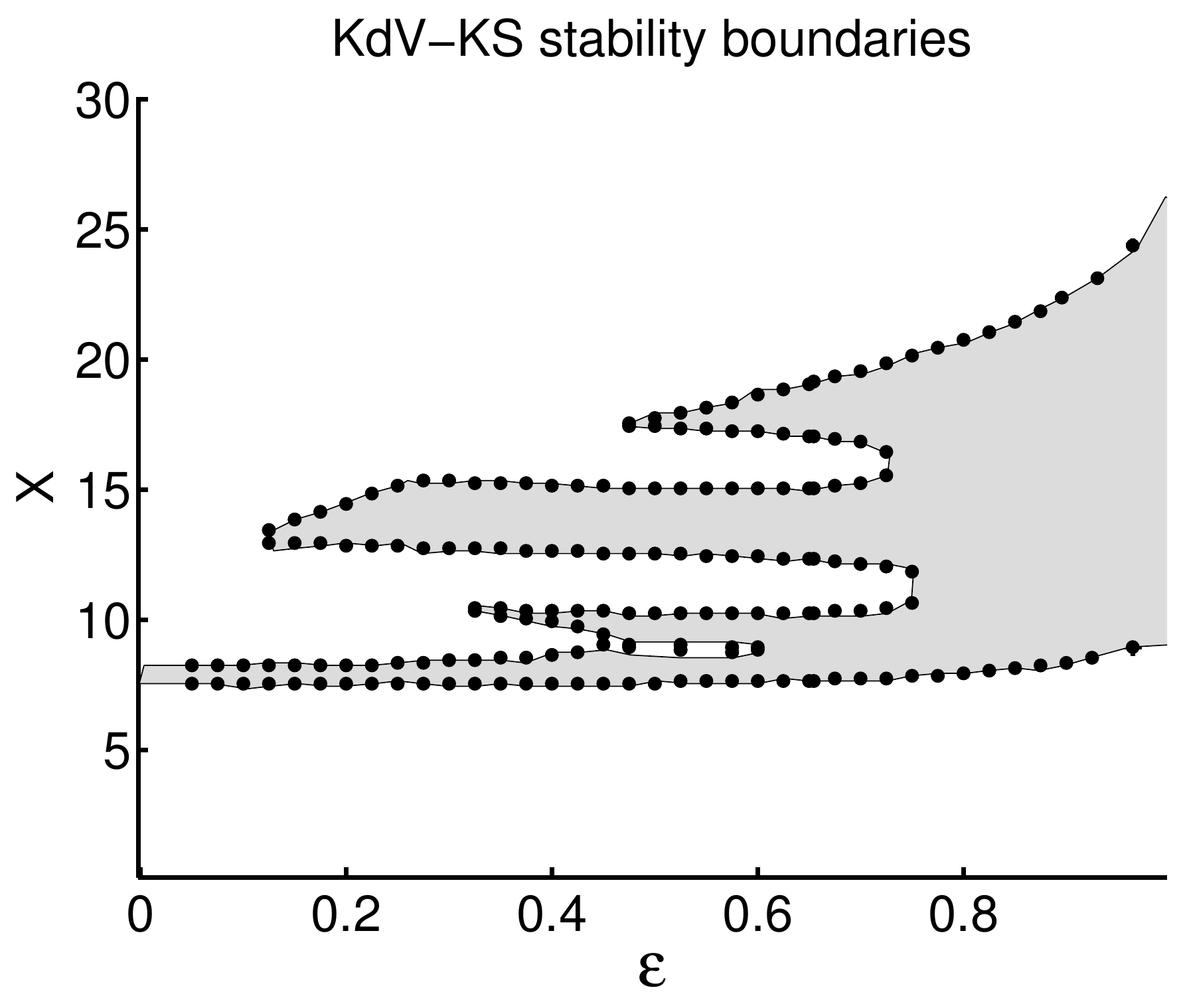}\quad
\caption{Stability boundaries (in period $X$) vs. parameter $\eps$ 
for the KdV-KS equation \eqref{kdv-ks} with $\eps^2+\delta^2=1$.
}\label{f:islands}  
\end{center}
\end{figure}

\subsubsection{The KdV limit $\delta\to0^+$}\label{s:limit}
Of special interest for us is the KdV limit $\delta \to 0^+$ for \eqref{kdv-ks}, 
treated with varying degrees of rigor in \cite{EMR,BN,NR2,JNRZ2,B}:
a singularly perturbed Hamiltonian --- indeed, {\it completely integrable} --- system.
We cite briefly the relevant results; for details, see \cite{JNRZ2,B}.

\begin{proposition}[Existence \cite{EMR}]\label{p:kdvsolnexpand}
Given any positive integer $r\geq 1$, there exists $\delta_0>0$ such that there exist periodic traveling wave solutions 
$v_\delta(\theta)$, $\theta=Y-\sigma_\delta S$, of \eqref{kdv-ks} (with $\epsilon=1$) that are analytic functions of $\theta\in\RM$ and $C^r$ functions of $\delta\in[0,\delta_0)$.  
When $r\geq 3$, profiles $v_\delta$ expand as $\delta\to 0^+$ as a $2$-parameter family
\begin{equation}\label{kdvsolnexpand}
\left\{\begin{aligned}
v_\delta(\theta;a_0,k)&=T_0(\theta; a_0,k,\kappa)+\delta T_1(\theta)+\delta^2 T_2(\theta)+O(\delta^3),\\
\sigma_\delta &=\sigma_0(a_0,k,\kappa)+\delta^2 \sigma_2+O(\delta^3),
\end{aligned}\right.
\end{equation}
where 
$$
\displaystyle
T_0(\theta;a_0,k, \kappa)\ =\ 
a_0+12k^2\kappa^2\cn^2\left(\kappa\,\theta,k\right),\quad 
\sigma_0\ =\ a_0+4\kappa^2\,(2k^2-1),
$$
comprise the $3$-parameter family (up to translation) of periodic KdV profiles and their speeds;
$\cn(\cdot,k)$ is the Jacobi elliptic cosine function with elliptic modulus $k\in(0, 1)$;
$a_0$ is a parameter related to Galilean invariance;
and $\kappa=\mathcal{G}(k)$ is determined via the selection principle 
$$
\displaystyle
\left(\frac{K(k)\mathcal{G}(k)}{\pi}\right)^2=
   \frac{7}{20}\frac{2(k^4-k^2+1)E(k)-(1-k^2)(2-k^2)K(k)}{(-2+3k^2+3k^4-2k^6)E(k)+(k^6+k^4-4k^2+2)K(k)},
$$
where $K(k)$ and $E(k)$ are the complete elliptic integrals of the first and second kind. The period $X(k)=2K(k)/\mathcal{G}(k)$ is in one-to-one correspondence with $k$. Moreover the functions $(T_i)_{i=1,2}$ are (respectively odd and even) solutions of the linear equations
\be\label{cork}
\mathcal{L}_0[T_0]T_1=T_0''+T_0'''',\quad
\mathcal{L}_0[T_0]T_2=\left(\frac{T_1^2}{2}-\sigma_2 T_0\right)'+ T_1''+ T_1'''',
\ee
on $(0,2K(k)/\mathcal{G}(k))$ with periodic boundary conditions,
where 
$\mathcal{L}_0[T_0]:=-\partial_\theta^3-\partial_\theta\left(T_0-\sigma_0\right)$
denotes the linearized KdV operator about $T_0$.
\end{proposition}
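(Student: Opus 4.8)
The plan is to read \eqref{cork} and the selection principle as the order‑by‑order content of a Lyapunov--Schmidt reduction of the traveling‑wave profile equation about the completely integrable KdV limit $\delta=0$. Inserting $v=v(\theta)$, $\theta=Y-\sigma S$, into \eqref{kdv-ks} with $\eps=1$ and integrating once gives $v''+\delta(v'+v''')+\tfrac12 v^2-\sigma v=c$ for an integration constant $c$; this is third order for $\delta>0$ but degenerates to second order at $\delta=0$, where it is exactly the cnoidal‑wave ODE of KdV with the explicit three‑parameter family $T_0(\theta;a_0,k,\kappa)$ (with $c$ fixed by $(a_0,k,\kappa)$, or absorbed by the Galilean symmetry $(v,\sigma)\mapsto(v+\mathrm{const},\sigma+\mathrm{const})$). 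The essential difficulty is that the perturbation is \emph{singular}: at $\delta=0$ the solution set is a whole manifold of periodic orbits of varying period, so the implicit function theorem cannot be applied directly, and the KS terms enter the linearization at \emph{higher} differential order than the unperturbed operator. To organize the reduction I would fix the period --- rescaling $\theta$ onto a fixed circle so that the wavenumber (equivalently $\kappa$) appears as an explicit parameter --- and quotient the translation and Galilean invariances by two scalar normalizations (say $\langle v-T_0,\partial_\theta T_0\rangle=0$ and a prescribed mean), leaving the corrector $w:=v-T_0$ and the speed correction as unknowns, with parameters $\delta,a_0,k,\kappa$.

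The next step is the Fredholm analysis of the linearized KdV operator $\mathcal L_0[T_0]=-\partial_\theta\circ N$, $N:=\partial_\theta^2+(T_0-\sigma_0)$. Differentiating the profile ODE gives $NT_0'=0$, and one‑dimensionality of $\ker N$ for the (Lamé‑type) cnoidal potential is the generic (``non‑degenerate'') situation, so on $X$‑periodic functions $\ker\mathcal L_0[T_0]=\operatorname{span}\{T_0',u_*\}$ with $Nu_*=1$, while $\mathcal L_0[T_0]^{\,*}=N\circ\partial_\theta$ gives $\operatorname{coker}\mathcal L_0[T_0]=\operatorname{span}\{1,T_0\}$. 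Because the whole profile equation is a total $\theta$‑derivative, the solvability condition against the constant is automatic at every order; the remaining scalar condition, pairing against $T_0$, together with the implicit function theorem applied on the complement of the kernel (where $\mathcal L_0[T_0]$ is boundedly invertible in appropriately chosen Sobolev scales, accommodating the order mismatch), yields $w$ and the speed correction as $C^r$ functions of the parameters and reduces the problem to a single scalar bifurcation equation. Expanding it in $\delta$ with $v=T_0+\delta T_1+\delta^2T_2+\cdots$, $\sigma=\sigma_0+\delta\sigma_1+\cdots$ reproduces exactly the hierarchy \eqref{cork}: parity of $T_0$ forces $\sigma_1=0$ and $T_1$ odd (hence $T_2$ even); the order‑$\delta$ condition is the secular‑nonresonance identity, which by $\langle T_0''+T_0'''',T_0\rangle=\|T_0''\|^2-\|T_0'\|^2$ amounts to balancing $\|T_0''\|^2$ against $\|T_0'\|^2$ over one period. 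Evaluating these periodic integrals in closed form --- reducing $\int_0^{2K}\!\sn^2\cn^2\dn^2$ and $\int_0^{2K}\!\big[(\sn\cn\dn)'\big]^2$ via the standard recursion for $\int_0^{2K}\!\sn^{2n}$ --- collapses the condition to $K$ and $E$ and yields precisely the displayed ratio defining $\kappa=\mathcal G(k)$; the order‑$\delta^2$ condition then determines $\sigma_2$ (and $T_2$ via \eqref{cork}).

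To finish I would verify the non‑degeneracy needed to close the reduction: that the scalar bifurcation map vanishes to second order in $\delta$ (its $\delta^0$, $\delta^1$ parts being the KdV equation and the automatic/parity conditions), that $\partial_\kappa$ of its leading ($\delta^2$) coefficient is nonzero at $\kappa=\mathcal G(k)$ --- equivalently strict monotonicity of $\kappa\mapsto\|T_0''\|^2/\|T_0'\|^2$, together with non‑vanishing of the denominator in the formula for $\mathcal G$ on $(0,1)$, checked from monotonicity of the relevant elliptic‑integral combinations --- and that the transverse operator is invertible. Dividing out the factor $\delta^2$ (Taylor's theorem, at the cost of regularity in $\delta$, which is why the two‑term expansion needs $r\ge3$ while mere existence of the smooth branch needs only $r\ge1$; the finite‑regularity/shrinking‑$\delta_0$ feature is the residue of the singular perturbation) and invoking the implicit function theorem produces the branch $v_\delta$, with $\kappa=\mathcal G(k)+O(\delta)$ reabsorbed into the higher correctors. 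Analyticity of $v_\delta$ in $\theta$ is automatic from the analytic (polynomial) right‑hand side of the autonomous profile ODE, and the bijectivity $k\leftrightarrow X(k)=2K(k)/\mathcal G(k)$ follows from strict monotonicity of the period function.

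The step I expect to be the main obstacle is twofold: setting up the singular reduction so that the implicit function theorem genuinely applies with bounds uniform as $\delta\to0^+$ despite the order mismatch between $\mathcal L_0[T_0]$ and the KS perturbation (this is what forces the $C^r$, $r$‑dependent‑$\delta_0$ formulation), and the explicit elliptic‑function computation identifying the leading solvability condition with the stated $\mathcal G(k)$ and confirming the attendant non‑degeneracies (invertibility of the transverse operator, non‑vanishing denominator of $\mathcal G$, monotone period). An alternative route, closer to \cite{EMR}, is to bypass the reduction by Whitham/averaging over the integrable KdV flow, realizing the selected waves as non‑degenerate equilibria of the averaged modulation system; the analytical content --- a solvability condition reducing to elliptic integrals, plus a non‑degeneracy --- is unchanged.
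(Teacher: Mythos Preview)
The paper does not actually prove this proposition: it is stated as a cited result from \cite{EMR} (with additional references to \cite{JNRZ1,B} for details), so there is no ``paper's own proof'' to compare against directly. What the paper \emph{does} contain is the closely related proof of Theorem~\ref{maine} for the St.~Venant profiles, and Remark~\ref{r:selection} commenting on the selection principle; these are the only internal hints at how the argument goes.

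Your outline is a correct and standard route. You have identified the key structural points: the profile ODE degenerates in order at $\delta=0$; the leading operator $\mathcal L_0[T_0]$ has a two-dimensional periodic kernel/cokernel with the constant direction automatically satisfied because the equation is in divergence form; the single remaining solvability condition against $T_0$ gives exactly $\int T_0(T_0''+T_0'''')=0$, i.e.\ $\|T_0'\|^2=\|T_0''\|^2$, which after the elliptic-integral reductions is the displayed formula for $\mathcal G(k)$; parity kills $\sigma_1$; and the loss of regularity in $\delta$ (hence $C^r$ with $r$-dependent $\delta_0$) is the trace of the order mismatch. All of this is right.

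Where your plan differs from the paper's treatment of the \emph{analogous} St.~Venant case (Section~\ref{s:existence}) is in organization rather than content. There the authors do not set up an abstract Lyapunov--Schmidt/Fredholm scheme; instead they write the profile equation in the form $\tfrac{1}{\tilde\delta}H'=(\text{perturbation})$ with $H$ the KdV Hamiltonian, and then invoke ``standard arguments in the study of regular perturbations of planar Hamiltonian systems'' \cite[Chapter~4]{GH}, i.e.\ a Melnikov-type persistence criterion. That immediately yields the selection integral $\int_0^X T_0'((\tilde c+1)T_0-\tfrac12 T_0^2)'\,dx=0$, which they then rewrite (using the KdV profile equation) as $\int_0^X T_0(T_0''+T_0'''')\,dx=0$, identical to yours and to the KdV--KS condition of Remark~\ref{r:selection}. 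The advantage of the Melnikov viewpoint is that it is low-tech and geometric; the advantage of your Lyapunov--Schmidt formulation is that it makes the Fredholm bookkeeping and the origin of \eqref{cork} at each order completely transparent, and it is better adapted to the genuinely singular KdV--KS case (the St.~Venant reduction is, as noted in Remark~\ref{regrmk}, a \emph{regular} perturbation, so the planar-Hamiltonian argument applies cleanly there but would need modification for KdV--KS). Your closing remark that the averaging/Whitham route is closer to \cite{EMR} is accurate.
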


Throughout this manuscript, we let $v_\delta(\cdot;a_0,k)$ and $\sigma_\delta(a_0,k)$ denote the periodic traveling wave profiles and wave speeds, respectively, as described in Proposition \ref{kdvsolnexpand}.  

\br\label{r:selection}
\textup{
The additional parameter $\kappa$ for periodic KdV waves as compared to KdV-KS waves reflects the existence of
the additional conserved quantity of the Hamiltonian at $\delta=0$. The selection principle $\kappa=\mathcal{G}(k)$ is precisely the condition that
the periodic Hamiltonian orbits at $\delta=0$ persist, to first order, for $0<\delta\ll 1$.  Alternatively, this condition
can be written explicitly as $\int_0^{2K(k)/\kappa}T_0\,(T_0''+T_0'''')dx=0$.
}
\er

By Galilean invariance of the underling equation \eqref{kdv-ks}, the stability properties of the above-described $X=X(k)$-periodic solutions 
are independent of the parameter $a_0$.  Hence, for stability purposes one may identify waves with a common period, 
fixing $a_0$ and studying stability of a one-parameter family in $k$. It is known \cite{KSF,Sp,BD} that the spectra of the linearized operator $\mathcal{L}[T_0]$, considered
on $L^2(\RM)$, about a periodic KdV wave $T_0$ is spectrally stable in the neutral, 
Hamiltonian, sense, i.e., all eigenvalues of the Bloch operators
\[
\mathcal{L}_\xi[T_0]:=(\partial_Y+i\xi)\left(-(\partial_Y+i\xi)^2- T_0+\sigma_0\right):L^2_{\rm per}(0,X)\to L^2_{\rm per}(0,X),
\]
considered with compactly embedded domain $H^3_{\rm per}(0,X)$, are purely imaginary for each $\xi\in[-\pi/X,\pi/X)$. Moreover, the explicit description of the spectrum obtained in \cite{BD} also yields\footnote{In \cite{BJNRZ2,B}, some of these facts remained unnoticed to the authors. In particular, in \cite{BJNRZ2}, the condition that only $\xi=0$ yields $\lambda=0$ was gathered to condition (A) below to form condition (A1) and distinctness of the $\alpha_j$ was identified as condition (A2).} that $\lambda=0$ is
an eigenvalue of $\mathcal{L}_0[T_0]$ of algebraic multiplicity three, that $\lambda=0$ is an eigenvalue of $\mathcal{L}_\xi[T_0]$ only if $\xi=0$, and that the three zero eigenvalues of $\mathcal{L}_0[T_0(\cdot;a_0,k,\mathcal{G}(k))]$ expand about for $|\xi|\ll 1$ as
\begin{equation}\label{kdvspecexpand}
\lambda_j(\xi)=i\alpha_j(\xi)\xi=i\xi \alpha_j + O(\xi^2),~~~j=1,2,3
\quad
\hbox{\rm with $\alpha_j\in\RM$ distinct.}
\end{equation}
We introduce one final technical condition, first observed then proved numerically to hold, at least for 
KdV waves that are limits as $\delta\to 0$ of
stable waves of \eqref{kdv-ks} \cite{BD,BJNRZ2,B}:\footnote{
	In fact (A) has been verified (see Proposition \ref{p:blake} below)
	on essentially the entire range $k\in (0,1)$; 
	we know of no instance where it fails.
}
\begin{itemize}
\item[(A)]
A given parameter $k\in(0,1)$ is said to satisfy condition (A) if 
the non-zero eigenvalues of the linearized (Bloch) KdV operator $\mathcal{L}_\xi[T_0]$ about $T_0(\cdot; a_0,k, \mathcal{G}(k))$
are simple for each $\xi\in[-\pi/X,\pi/X)$.
\end{itemize}
Note that the set of 
$k\in(0,1)$ for which property (A) holds is open. 

Given a periodic traveling wave solution $T_0(\cdot;a_0,k,\mathcal{G}(k))$ of the KdV equation with 
elliptic modulus $k\in(0,1)$ satisfying condition (A) above,
we now consider the spectral stability of the associated family of periodic traveling wave solutions $v_\delta(\cdot;a_0,k)$,
defined for $\delta\in[0,\delta_0)$, with $\delta_0$ as in Proposition \ref{kdvsolnexpand}, as solutions of the KdV-KS equation \eqref{kdv-ks}. To this end, notice that,
	assuming $k\in(0,1)$ satisfies condition (A),
	the non-zero Bloch-eigenvalues $\lambda(\xi)$ of the linearized KdV-KS operator
\[
L_\xi[v_\delta]=e^{-i\xi \cdot}\left[-\delta\left(\partial_Y^4+\partial_Y^2\right)-\partial_Y^3-\partial_Y\left(v_\delta-\sigma_\delta\right)\right]e^{i\xi \cdot}:L^2_{\rm per}(0,X)\to L^2_{\rm per}(0,X)
\]
admit a smooth\footnote{In the sense of Proposition \ref{p:kdvsolnexpand} that one can reach arbitrary prescribed regularity.} expansion in $\delta$ for $0<\delta\ll 1$. 
In particular, for each pair $(\xi,\lambda_0)$ with $\lambda_0\in\sigma(\mathcal{L}_\xi[T_0])\setminus\{0\}$
and $\xi\in[-\pi/X,\pi/X)$ there is a unique spectral curve $\lambda(\xi,\lambda_0,\delta)$ bifurcating from $\lambda_0$ 
smoothly in $\delta$, and it takes the form
\begin{equation}\label{kdvks-evexpand}
\lambda(\delta;\xi,\lambda_0)\ =\ \lambda_0+\delta\lambda_1(\xi,\lambda_0)+O(\delta^2)
\end{equation}
for some $\lambda_1(\xi,\lambda_0)$.
It is then natural to expect that the signs of the real parts of the first order correctors $\lambda_1(\xi,\lambda_0)$ in the above expansion be indicative of stability or instability of the near-KdV profiles $u_\delta$ for $0<\delta\ll 1$.
With this motivation in mind, for any $k\in(0,1)$ that satisfies condition (A) above\footnote{Condition (A) is independent of $a_0$, 
	holding
	for every $a_0$ or for none. Likewise, 
$ {\rm Ind}(k)$
is independent of $a_0$.}, we define
\begin{equation}\label{index}
{\rm Ind}(k):=\sup_{\substack{\lambda_0\in\sigma(\mathcal{L}_\xi[T_0(\cdot;a_0,k,\mathcal{G}(k))])\setminus\{0\}\\ \xi\in[-\pi/X(k),\pi/X(k))}}\Re\left(\lambda_1(\xi,\lambda_0)\right).
\end{equation}
Evidently, ${\rm Ind}(k)>0$ is a sufficient condition for the spectral instability for $0<\delta\ll 1$ of the near-KdV waves $v_\delta$ bifurcating from $T_0$.  The next proposition states that the condition ${\rm Ind}(k)<0$ is also sufficient for the diffusive spectral stability of the $v_\delta$, for $0<\delta\ll 1$. Define the open set
\[
\mathcal{P}\ :=\ \left\{\ k\in(0,1)\ \middle|\ \textrm{condition}~(A)~\textrm{holds for}~k~\textrm{and Ind}(k)<0\ \right\}.
\] 

\begin{proposition}[Limiting stability conditions \cite{JNRZ1}]\label{p:kdvstab}
For each $k\in\mathcal{P}$ there exists a neighborhood $\Omega_k\subset(0,1)$ of $k$ and  $\delta_0(k)>0$ such that $\Omega_k\subset\mathcal{P}$ and for any $(a_0,\tilde k,\delta)\in\R\times\Omega_k\times(0,\delta_0(k))$ the non-degeneracy and spectral stability conditions (H1) and (D1)-(D3) hold for $v_\delta(\cdot; \tilde a_0, \tilde k)$. In particular, $\mathcal{P}$ is open and 
$\delta_0(\cdot)$ can be chosen uniformly on compact subsets of $\mathcal{P}$.
\end{proposition}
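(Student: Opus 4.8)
\emph{Sketch of proof.} The plan is to view the Bloch family $L_\xi[v_\delta]$, $\xi\in[-\pi/X,\pi/X)$, as a singular perturbation of the purely imaginary KdV family $\mathcal{L}_\xi[T_0]$ and to verify (D1)--(D3) and (H1) region by region: (I) the three ``critical'' eigenvalue branches that continue in $\xi$ from the triple eigenvalue of $\mathcal{L}_0[T_0]$ at $\lambda=0$; (II) the remaining spectrum at bounded frequency, which perturbs off the \emph{simple} KdV eigenvalues; and (III) the high-frequency spectrum. Region (I) furnishes (D3), (H1), and the $|\xi|\ll1$ part of (D2); regions (II)--(III) furnish (D1) and the remainder of (D2). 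Openness of $\mathcal{P}$, the neighborhood $\Omega_k$, and uniformity of $\delta_0(\cdot)$ on compacts will follow at the end from continuity in $k$ of every ingredient together with a covering argument, while independence of $a_0$ is immediate from Galilean invariance.

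\emph{Region (I): low frequencies and the origin.} I would fix $\rho,\eps_0>0$ with $\partial B(0,\rho)\cap\sigma(\mathcal{L}_\xi[T_0])=\emptyset$ for $|\xi|\le\eps_0$; this is possible since, by the explicit description of \cite{BD} recalled above, for small $\xi$ the only spectrum of $\mathcal{L}_\xi[T_0]$ near the origin is the triple eigenvalue at $\xi=0$ splitting into the three curves of \eqref{kdvspecexpand}. For $\delta$ small the Riesz projection $\Pi(\delta,\xi)$ of $L_\xi[v_\delta]$ onto $B(0,\rho)$ has rank $3$, is analytic in $\xi$, and is $C^r$ in $\delta$ for any prescribed $r$ (Proposition \ref{p:kdvsolnexpand}); let $M(\delta,\xi)$ be the associated $3\times3$ reduced matrix, whose characteristic polynomial one expands in $(\delta,\xi)$. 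At $\delta=0$ the eigenvalues of $M$ are $i\alpha_j\xi+O(\xi^2)$ with the $\alpha_j$ real and distinct by \eqref{kdvspecexpand}, so for $0<\delta\ll1$ the three branches still split linearly in $\xi$ with distinct slopes converging to the $\alpha_j$ --- this is (H1). The existence of the \emph{two}-parameter family $v_\delta(\cdot;a_0,k)$ (Proposition \ref{p:kdvsolnexpand}) together with translation invariance provides two of the three directions as a generalized kernel of $L_0[v_\delta]$ for every $\delta$ --- the selection principle $\kappa=\mathcal{G}(k)$ of Remark \ref{r:selection} being precisely the first-order solvability condition that makes this so --- so $\lambda=0$ remains an eigenvalue of $L_0[v_\delta]$ with $N=2$-dimensional generalized eigenspace, which is (D3), while the third eigenvalue of $M(\delta,0)$ is expelled to $\delta\mu_1+O(\delta^2)$. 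Two sign facts remain, both of which vanish identically at $\delta=0$ because KdV is Hamiltonian (its Bloch spectra are purely imaginary) and therefore reduce to $\delta$-derivatives at $\delta=0$ of explicit functionals of $T_0$ and the correctors $T_1,T_2$ of \eqref{cork}: (a) $\Re\mu_1<0$ for the expelled branch, and (b) the $\xi^2$-coefficient of $\Re\lambda_j(\xi)$ is strictly negative for the two persistent branches. Granting (a)--(b), taking $\theta=\theta(\delta)$ a sufficiently small multiple of $\delta$ gives (D2) on $|\xi|\le\eps_0$, and (a) accounts for the part of (D1) near the origin.

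\emph{Regions (II)--(III): the rest of the spectrum.} Let $\rho'>0$ be a uniform lower bound for $\mathrm{dist}\big(0,\sigma(\mathcal{L}_\xi[T_0])\big)$ over $\eps_0\le|\xi|\le\pi/X$ (positive by compactness, since $0\in\sigma(\mathcal{L}_\xi[T_0])$ only at $\xi=0$), and set $\rho''=\min(\rho,\rho')$. By condition (A) every $\lambda_0\in\sigma(\mathcal{L}_\xi[T_0])$ with $\rho''\le|\lambda_0|\le R$ is simple --- uniformly so, since distinct continuous branches cannot merge without producing a non-simple nonzero eigenvalue and the parameter set is compact --- so the bifurcating branch \eqref{kdvks-evexpand} is well defined with uniform remainder, and $\Re\lambda(\delta;\xi,\lambda_0)=\delta\,\Re\lambda_1(\xi,\lambda_0)+O(\delta^2)\le\delta\,{\rm Ind}(k)+O(\delta^2)<0$ for $\delta$ small; since ${\rm Ind}(k)<0$ this is also $\le-\theta(\delta)|\xi|^2$ once $\theta(\delta)$ is a small enough multiple of $\delta$. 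Together with region (I) this disposes of all of $\sigma(L_\xi[v_\delta])$ with $|\lambda|\le R$. For $|\lambda_0|\ge R$ one proceeds in two overlapping sub-ranges. When $R\le|\lambda_0|\lesssim\delta^{-3/2}$ the expansion \eqref{kdvks-evexpand} is still valid (the $O(\delta)$ displacement stays below the $O(|\lambda_0|^{2/3})$ gaps of the high-frequency KdV eigenvalues) and $\Re\lambda_1(\xi,\lambda_0)\sim-|\lambda_0|^{4/3}$, because the top-order perturbation is $-\delta(\partial_Y+i\xi)^4$; hence $\Re\lambda\le-c\,\delta R^{4/3}<0$. For the remaining $|\lambda|\gtrsim\delta^{-1}$ one uses instead the energy identity, for a normalized eigenfunction $w$,
\[
\Re\lambda=-\delta\|(\partial_Y+i\xi)^2w\|^2+\delta\|(\partial_Y+i\xi)w\|^2+\Re\bigl\langle-(\partial_Y+i\xi)\bigl((v_\delta-\sigma_\delta)w\bigr),\,w\bigr\rangle,
\]
together with the elementary bound $|\Im\lambda|\lesssim\|(\partial_Y+i\xi)^2w\|^{3/2}+1$, which forces $\|(\partial_Y+i\xi)^2w\|\gtrsim|\Im\lambda|^{2/3}$ and hence $\Re\lambda\le-c\,\delta\,|\Im\lambda|^{4/3}+\text{(lower order)}<0$; the two sub-ranges overlap for small $\delta$, and since $|\xi|$ is bounded these bounds trivially dominate $-\theta(\delta)|\xi|^2$.

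Combining the three regions gives (D1)--(D3) and (H1) for $\tilde k=k$, every $a_0$, and $0<\delta<\delta_0(k)$. Each ingredient --- the rank of $\Pi$, distinctness of the $\alpha_j$, the simplicity in condition (A), the sign of ${\rm Ind}$, and the two sign computations (a)--(b) --- is stable under small changes of $k$, so it persists on a neighborhood $\Omega_k\subset\mathcal{P}$ with $\delta_0$ locally uniform; a covering argument then makes $\delta_0(\cdot)$ uniform on compact subsets of $\mathcal{P}$, and in particular $\mathcal{P}$ is open. The two places I expect to be genuinely hard are: the sign computations (a)--(b) in region (I) --- establishing dissipativity of the low-frequency (Whitham) dynamics and of the expelled mode directly from the explicit KdV profiles and the particular $\partial_Y^2+\partial_Y^4$ structure of \eqref{kdv-ks} --- and the uniformity of the perturbation analysis as $|\lambda_0|\to\infty$ \emph{simultaneously} with $\delta\to0$, i.e.\ matching the bounded-$\lambda_0$ expansion to the high-frequency energy estimate robustly in the singular limit in which the regularizing term $\delta\,\partial_Y^4$ itself degenerates.
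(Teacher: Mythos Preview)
Your overall architecture is right and close to what \cite{JNRZ1} does: split into a near-origin block carried by a rank-three reduction, a mid-frequency block handled by the expansion \eqref{kdvks-evexpand} under condition (A), and a high-frequency tail. The paper (following \cite{JNRZ1}) organizes this through the periodic Evans function rather than Riesz projections, but that is only a difference of language.

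There is, however, a genuine gap in Region (I). You isolate the two sign conditions (a) (for the expelled mode at $\xi=0$) and (b) (for the $\xi^2$-coefficients of the two persistent branches) and then say ``granting (a)--(b)'' you are done, flagging them at the end as the hard direct computations. But in the statement you are proving, the \emph{only} hypothesis available near the origin is $k\in\mathcal{P}$, i.e.\ condition (A) together with ${\rm Ind}(k)<0$, and the paper stresses immediately after the Proposition that ``the fact that ${\rm Ind}(k)$ is sufficient for stability is far from a foregone conclusion, and this represents the main contribution of \cite{JNRZ1}.'' In other words, the heart of the proof is precisely to show that (a) and (b) are \emph{consequences} of ${\rm Ind}(k)<0$, not independent facts to be checked. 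Your sketch never makes that link: you use ${\rm Ind}(k)<0$ only in Region (II), and in Region (I) you propose to compute the signs directly from $T_0,T_1,T_2$. If those signs were not implied by ${\rm Ind}(k)<0$, the Proposition as stated would simply be false; so a direct computation, even if feasible, would be proving a different (stronger) statement than the one at hand.

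The way \cite{JNRZ1} closes this gap also explains why your single joint expansion of $M(\delta,\xi)$ is too coarse. Because the KdV zero eigenvalue is a triple point and the perturbation carries two small parameters, the correct unfolding depends on the ratio $\delta/|\xi|$, and \cite{JNRZ1} analyzes three asymptotic regimes separately --- $0<\delta\lesssim|\xi|$, $\delta\sim|\xi|$, and $0\le|\xi|\lesssim\delta$ (see the summary in Section~\ref{evans_expansion}). In the first regime the three critical curves are governed, to leading order, by exactly the same corrector $\lambda_1(\xi,\lambda_0)$ that defines ${\rm Ind}$, now evaluated in the limit $\lambda_0\to 0$ along the three KdV branches; this is what converts ${\rm Ind}(k)<0$ into a set of ``subcharacteristic conditions'' that force the correct signs. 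The other two regimes then show that these subcharacteristic conditions continue to control the picture when $|\xi|$ is comparable to or smaller than $\delta$, in particular pinning down the sign of your $\mu_1$ for the expelled mode. A bare Taylor expansion of the $3\times3$ characteristic polynomial in $(\delta,\xi)$ does not by itself separate these regimes or exhibit the connection to ${\rm Ind}$. Your high-frequency discussion in Region (III) is fine in spirit, though in \cite{JNRZ1} this is again packaged via the Evans function rather than the energy identity you wrote down.
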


Proposition \ref{p:kdvstab} reduces the question of diffusive spectral stability and nonlinear stability --- in the sense defined in Section \ref{s:conditions} --- of the near-KdV profiles constructed in Proposition \ref{kdvsolnexpand} to the verification of 
the structural condition (A) and the evaluation of the function ${\rm Ind}(k)$.  
Note condition (A) is concerned only with the spectrum
of the linearized KdV operator about the limiting KdV profile $v_0$; its validity is discussed in detail in \cite[Section~1]{JNRZ2}.
Further, note that due to the triple eigenvalue of the KdV linearized operator
at the origin, the fact that $\textrm{Ind}(k)$ is sufficient for stability is far from a foregone conclusion, and this represents
the main contribution 
of \cite{JNRZ1}.

To evaluate ${\rm Ind}(k)$, using the complete integrability of the KdV equation to find an explicit parametrization of the KdV spectrum and eigenfunctions about $v_0$ \cite{BD}, one can construct a continuous multi-valued mapping\footnote{While $\lambda_1(\xi,\lambda_0)$ is defined above only when $\lambda_0$ is a simple eigenvalue of the limiting linearized KdV operator $\mathcal{L}_\xi[v_0]$ , it possesses an explicit expression in terms of $k$ and an auxiliary Lax spectral parameter that extend this function to points where simplicity fails. Note in particular that this extension is triple-valued at $(0,0)$.}
\[
[-\pi/X,\pi/X)\times\RM i\ni (\xi,\lambda_0)\mapsto \Re(\lambda_1(\xi,\lambda_0))\in\RM
\]
that is explicitly computable in terms of Jacobi elliptic functions; see \cite{BN} or \cite[Appendix A.1]{BJNRZ2}. This mapping may then be analyzed numerically. Numerical investigations of \cite{BN,BJNRZ2} suggest stability for limiting periods $X=X(k)$ in an open interval $(X_m,X_M)$ and instability for $X$ outside $[X_m,X_M]$, with $X_m\approx 8.45$ and $X_M\approx 26.1$, thus completely classifying stability for $0<\delta\ll1$. The following result of Barker \cite{B}, established by numerical proof using
interval arithmetic, 
gives rigorous validation of these observations for all limiting periods $X(k)$
except for a set near the boundaries
of the domain of existence $X(k)\in (2\pi, +\infty)\approx (6.2832, +\infty)$
corresponding to the limits $k\to 0,1$.

%
\begin{proposition}[Numerical proof \cite{B}]\label{p:blake}
With $k_{\min} =$ \kUnstableLowerLeft \   and $k_{\max}=$ \kUnstableUpperRight, corresponding to $X_{\min}\approx 6.284$ and $X_{\max}\approx 48.3$, condition (A) holds on $[k_{\min},k_{\max}]$. Moreover, there are $k_l \in [0.9421,0.9426]$ and $k_r\in[0.99999838520,0.99999838527]$, corresponding to $X_l\in [8.43,8.45]$ and $X_r\in [26.0573, 26.0575]$, 
such that $\mathcal{P}\cap[k_{\min},k_{\max}]=(k_l,k_r)$ and ${\rm Ind}$ takes positive values on $[k_{\min},k_{\max}]\setminus[k_l,k_r]$.
\end{proposition}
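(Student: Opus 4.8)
The plan is to reduce both assertions --- validity of condition (A) on $[k_{\min},k_{\max}]$ and the exact determination of $\mathcal{P}\cap[k_{\min},k_{\max}]$ --- to rigorous numerical enclosures, via interval arithmetic, of a small number of explicit real-analytic functions of $k$ and one or two auxiliary parameters. The starting point is the complete integrability of KdV: as recalled above, the spectrum and eigenfunctions of the linearized (Bloch) KdV operator $\mathcal{L}_\xi[T_0]$ about the cnoidal profile $T_0(\cdot;a_0,k,\mathcal{G}(k))$ admit a closed-form parametrization (following \cite{BD}) by an auxiliary Lax spectral parameter, and both the Bloch frequency $\xi$ and the first-order corrector $\lambda_1(\xi,\lambda_0)$ in \eqref{kdvks-evexpand} are then expressible in Jacobi elliptic functions and complete elliptic integrals (see \cite{BN} or \cite[Appendix A.1]{BJNRZ2}). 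With such formulas in hand, everything becomes an explicit, though delicate, function-enclosure problem.

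For condition (A), I would use the integrable parametrization to express the nonzero Bloch eigenvalues of $\mathcal{L}_\xi[T_0]$ as the values of an explicit multivalued map of the Lax parameter, so that simplicity for a given $\xi$ is equivalent to the relevant branches of this map being pairwise distinct --- equivalently, to nonvanishing of a Wronskian/resultant of the two periodic eigenfunctions. One then covers the compact range $k\in[k_{\min},k_{\max}]$ together with the compact Bloch range $\xi\in[-\pi/X(k),\pi/X(k)]$ by small boxes and, on each box, computes an interval enclosure of the separation between adjacent spectral branches; boxes on which the enclosure is bounded away from zero are finished, and boxes of uncertain sign are subdivided. The one structural subtlety is that the Lax parameter ranges over a noncompact set (the KdV $L^2$-spectrum fills $i\RM$), so the finite computation must be supplemented by a uniform-in-$k$ asymptotic estimate showing that for $|\lambda_0|$ large the branches separate at a definite rate, ruling out any far-field collision.

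For the function ${\rm Ind}$ and the set $\mathcal{P}$, I would first compactify: insert the explicit formula for $\Re(\lambda_1(\xi,\lambda_0))$ and rewrite the supremum in \eqref{index} as $\max_{s} f(k,s)$ over a single \emph{compact} auxiliary variable $s$ (after a change of variables absorbing $\xi$ and $\lambda_0$, and after checking, again by a uniform-in-$k$ asymptotic bound, that $\Re(\lambda_1)$ tends to a strictly negative limit as $|\lambda_0|\to\infty$, so that the far field never realizes the supremum). Covering $[k_{\min},k_{\max}]\times[\text{compact }s]$ by boxes and enclosing $f$ on each, interval arithmetic gives: on boxes where $f$ is enclosed as negative, nothing; on boxes where $f$ is enclosed as positive over some $s$-fiber above a given $k$, one concludes ${\rm Ind}(k)>0$. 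Running this over the whole range establishes the sign of ${\rm Ind}$ everywhere except in two small neighborhoods of the transition values; to pin these down one brackets each zero of $k\mapsto{\rm Ind}(k)$ by bisection refined with interval arithmetic and uses a numerically verified transversality bound $\tfrac{d}{dk}{\rm Ind}\neq 0$ at the crossing to conclude that ${\rm Ind}$ changes sign exactly once at each of $k_l,k_r$. Combined with the covering, this forces $\mathcal{P}\cap[k_{\min},k_{\max}]=(k_l,k_r)$ and positivity of ${\rm Ind}$ on the complement, with the stated enclosures for $k_l,k_r$ and the corresponding period enclosures obtained via $X(k)=2K(k)/\mathcal{G}(k)$.

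The main obstacle is the degenerate behavior at the endpoints of the existence interval, $k\to 0$ ($X\to 2\pi$) and $k\to 1$ ($X\to+\infty$): there the elliptic-integral expressions and the widths of the interval enclosures blow up (in the solitary-wave limit the period and the relevant integrals diverge, and resolving $k_r$ requires some twelve significant digits), which is exactly why the statement is confined to $[k_{\min},k_{\max}]$ and why locating $k_r$ demands a carefully tuned, high-precision interval computation near $k=1$. The secondary --- but equally essential --- difficulty is the noncompactness in the Lax parameter already noted: without a clean, uniform-in-$k$ asymptotic reduction of both the simplicity test and the $\Re(\lambda_1)$-supremum to a genuinely compact domain, no finite interval-arithmetic covering can be conclusive, so producing those asymptotics in explicitly checkable form is the analytic heart of the argument.
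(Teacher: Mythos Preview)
The paper does not give its own proof of this proposition: it is quoted as a result of \cite{B}, ``established by numerical proof using interval arithmetic,'' and the surrounding discussion only indicates the ingredients (the explicit Lax parametrization of KdV Bloch spectrum and eigenfunctions from \cite{BD}, and the closed-form expression for $\Re(\lambda_1)$ in Jacobi elliptic functions from \cite{BN,BJNRZ2}). Your outline is consistent with that description --- reduce both condition (A) and the sign of ${\rm Ind}$ to explicit real-analytic functions via the integrable structure, compactify the Lax parameter by uniform large-$|\lambda_0|$ asymptotics, and then run an interval-arithmetic covering with bisection near the two sign changes --- so there is nothing to compare beyond noting that your sketch matches the method the paper attributes to \cite{B}; the actual proof, with the specific parametrizations, asymptotic bounds, and validated enclosures, lives in \cite{B}.
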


As noted in \cite{B}, the limits $k\to 0$ and $k\to 1$ not treated in Proposition \ref{p:blake}, corresponding to Hopf and homoclinic limits, though inaccessible by the numerical methods of \cite{B}, should be treatable by asymptotics relating spectra
to those of (unstable \cite{BJRZ,BJNRZ3}) limiting constant and homoclinic profiles.

\subsection{Description of main results}\label{s:results}

As mentioned previously, the fact that the KdV-KS equation \eqref{kdv-ks} serves as an amplitude equation for the shallow water 
system \eqref{swe} in the weakly nonlinear regime $0<\delta=\sqrt{F-2}\ll 1$ suggests that Proposition \ref{p:kdvstab} and Proposition \ref{p:blake} should
have natural counterparts for the stability of roll-waves in \eqref{swe}.  To explore this connection,
following \cite{JZN,JNRZ2,BJRZ}, we find it convenient to rewrite the viscous St. Venant 
equations \eqref{swe} in their equivalent  Lagrangian form:
\begin{equation}\label{swl}
\displaystyle   
\partial_t \tau-\partial_x u=0,\quad \partial_t u+\partial_x\left(\frac{\tau^{-2}}{2F^2}\right)=1-\tau\,u^2+\nu\partial_x(\tau^{-2}\partial_x u),
\end{equation}
where $\tau:=1/h$ and $x$ denotes now a Lagrangian marker rather than 
a physical location $\tilde x$, satisfying the relations $dt/d\tilde x=u(\tilde x,t)$ and 
$dx/d\tilde x=\tau(\tilde x,t)$.
In these coordinates, slope condition \eqref{e:Eslope} takes the form
\be\label{e:Lslope}
2 \nu u_x < F^{-2}.
\ee
Hereafter, we will work exclusively with the formulation \eqref{swl}.

\br
\textup{
Though nontrivial, the one-to-one correspondence between periodic waves of the Eulerian and Lagrangian forms is a well-known fact. A fact that seems to have remained unnoticed until very recently is that this correspondence extends to the spectral level even in its Floquet-by-Floquet description. In particular without loss of generality one may safely study spectral stability in either formulation. See for instance \cite{BNR} for an explicit description of the former correspondence and \cite{BMR} for the spectral connection, both in the closely related context of the Euler--Korteweg system. 
}
\er

\subsubsection{The weakly unstable limit $F\to 2^+$}\label{s:2limit}
Our first three results, and the main analytical results of this paper, comprise a rigorous validation of KdV-KS as a description
of roll-wave behavior in the weakly unstable limit $F\to 2^+$. Let $(\tau_0, u_0)$, $u_0={\tau_0}^{-1/2}$, 
be a constant solution of \eqref{swl}, and $c_0:=\tau_0^{-3/2}/2$. Setting $\delta=\sqrt{F-2}$, we introduce the rescaled dependent and independent variables
\be\label{rescaled}
\tilde \tau= 3\delta^{-2}\left(\frac{\tau}{\tau_0}-1\right),
\quad
\tilde u= 6\delta^{-2}\left(\frac{u}{u_0}-1\right),
\quad
Y=\frac{\tau_0^{5/4}\delta(x-c_0t)}{\nu^{1/2}},\quad S:=\frac{\delta^3 t}{4\tau_0^{1/4}\nu^{1/2}}.
\ee
Our first result concerns existence of small amplitude periodic traveling wave solutions in the limit $\delta\to 0^+$.

\begin{theorem}[Existence]\label{maine}
There exists $\delta_0>0$ such that there exist periodic traveling wave solutions  of 
\eqref{swl}, in the rescaled coordinates \eqref{rescaled} $(\tilde \tau,\tilde u)_\delta(\theta)$, 
$\theta=Y-\tilde c_\delta(a_0,k)S$, that are analytic functions of $\theta\in\RM$, $(a_0,k)\in\R\times(0,1)$ and  $\delta\in[0,\delta_0)$ and that, in the limit $\delta\to 0^+$, expand as a $2$-parameter family
\begin{equation}\label{kdvcompare}
\left\{\begin{aligned}
\tilde \tau_\delta(\theta;a_0,k) &=-
v_{\tilde\delta}(\theta;a_0,k) +O(\delta^2),\\
\tilde u_\delta(\theta;a_0,k)&= 
-\tilde \tau_\delta(\theta;a_0,k)
+\tfrac{\delta^2}{2}(\tilde q(a_0,k)-3\tau_0^{-1}\tilde c_\delta(a_0,k)\tilde \tau_\delta(\theta;a_0,k)),\\
\tilde c_\delta(a_0,k)&=  \sigma_{\tilde\delta}(a_0,k)+O(\delta^2),
\end{aligned}\right.
\end{equation}
where $\tilde\delta=\delta/(2\tau_0^{1/4}\nu^{1/2})$, $v_\delta(\theta;a_0,k)$ and $\sigma_\delta(a_0,k)$ are the small-$\delta$ traveling-wave profiles and speeds of KdV-KS described in \eqref{kdvsolnexpand} and 
$$
\tilde q(a_0,k)\ =\ 24k^2(1-k^2)(\mathcal{G}(k))^4-a_0\,(\tfrac12a_0+4(\mathcal{G}(k))^2(2k^2-1))\ \equiv\ v_0^2/2-\sigma_0v_0+v_0''
$$ 
is a constant of integration in the limiting KdV traveling-wave ODE (see \eqref{kdvprof}).  
\end{theorem}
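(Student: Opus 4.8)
The plan is to reduce the traveling-wave problem for \eqref{swl} to a scalar second-order profile ODE, rewrite it in the scaled variables \eqref{rescaled}, recognize the resulting $\delta$-dependent family of ODEs as a \emph{regular} (non-singular, analytic-in-$\delta$) perturbation of the KdV traveling-wave equation whose first-order correction reproduces the Kuramoto--Sivashinsky terms and the selection principle of Proposition~\ref{p:kdvsolnexpand}, and then to close the construction by a Lyapunov--Schmidt reduction together with the analytic implicit function theorem. Concretely, in a first step I would look for $(\tau,u)(x-ct)$ in \eqref{swl}: the conservation law $\partial_t\tau-\partial_x u=0$ integrates once to $u=q-c\tau$ with a constant $q$, and inserting this into the momentum equation and integrating once more yields
\[
c\nu\,\tau^{-2}\tau'\;=\;Q-c^2\tau-\frac{\tau^{-2}}{2F^2}+G,\qquad G'=1-\tau(q-c\tau)^2,
\]
a planar autonomous system for $(\tau,G)$ with parameters $c,q,Q,F,\nu$, $X$-periodicity of $\tau$ imposing $\int_0^X(1-\tau(q-c\tau)^2)\,dx=0$. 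The constant state $\tau\equiv\tau_0$, $u_0=\tau_0^{-1/2}$ corresponds to $q=q_0:=\tfrac32\tau_0^{-1/2}$, with $c_0=\tfrac12\tau_0^{-3/2}$ the associated characteristic speed at $F=2$; this is the base point about which we bifurcate. Note that the $\tilde u$-relation in \eqref{kdvcompare} is simply the identity $u=q-c\tau$ expressed in the variables \eqref{rescaled}, so the $u$-component is slaved to the $\tau$-component with no further expansion needed, and $\tilde q$ is identified in the limit $\delta\to0^+$ with the KdV integration constant $\tfrac12 v_0^2-\sigma_0 v_0+v_0''$ (whose explicit form in $a_0,k,\mathcal{G}(k)$ follows by evaluating on $v_0=T_0$).

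In a second step I would substitute \eqref{rescaled}, with $F=2+\delta^2$, into this profile ODE and expand in $\delta$. The crux is to show that, after clearing denominators and dividing by the appropriate power of $\delta$, the equation takes the form $\mathcal{A}_0(\tilde\tau)+\delta\,\mathcal{B}(\tilde\tau;\delta)=0$, where $\mathcal{A}_0(\tilde\tau)=0$ is, under the identification $\tilde\tau\leftrightarrow -v$, $\tilde c_0\leftrightarrow\sigma_0$, the limiting KdV traveling-wave equation $v''-\sigma_0 v+\tfrac12 v^2=\tilde q$, and $\mathcal{B}$ is smooth in all its arguments, with $\mathcal{B}$ at $\tilde\tau=-T_0$, $\delta=0$ --- after higher derivatives are reduced by $\mathcal{A}_0$ --- a nonzero multiple of the source $T_0''+T_0''''$ of the first equation in \eqref{cork}, the multiple being fixed by the rescaling $\tilde\delta=\delta/(2\tau_0^{1/4}\nu^{1/2})$, and the St.\ Venant-specific corrections entering only at order $\delta^2$. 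In effect the viscous term $\nu\partial_x(\tau^{-2}\partial_x u)$, read in the scaling \eqref{rescaled}, supplies the KdV dispersion, while the gravitational and frictional terms supply --- through the onset cancellation $c_0^2=\tfrac14\tau_0^{-3}$ --- the linear and quadratic terms of the KdV profile and, at next order, the Kuramoto--Sivashinsky corrections. Carrying out this computation, and keeping track of the numerical prefactors in \eqref{rescaled}--\eqref{kdvcompare}, is where essentially all the real work lies and is the main obstacle: it is precisely a rigorous justification, at the level of the profile ODE, of the formal derivation of \eqref{kdv-ks} as the amplitude equation for \eqref{swl}.

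In a third step I would run a Lyapunov--Schmidt reduction about the three-parameter KdV family $T_0(\cdot;a_0,k,\kappa)$, exactly as in the proof of Proposition~\ref{p:kdvsolnexpand}. Since $\mathcal{A}_0(\tilde\tau)=0$ inherits the Hamiltonian (hence degenerate) structure of KdV, the limit $\delta\to0^+$, although a same-order perturbation, is a genuine bifurcation problem: on $X(k)$-periodic functions the linearized KdV operator $\mathcal{L}_0[T_0]$ is Fredholm of index zero with nontrivial kernel and cokernel, so the $\delta$-expansion is solvable order by order only under scalar solvability conditions. The leading nontrivial one is the selection principle $\kappa=\mathcal{G}(k)$ --- equivalently $\int_0^{2K(k)/\kappa}T_0\,(T_0''+T_0'''')\,dx=0$, cf.\ Remark~\ref{r:selection} --- since by the second step the first-order source is a nonzero multiple of $T_0''+T_0''''$; the remaining conditions, together with the periodicity constraint from the first step, pin down $\tilde c_\delta$ and the $O(\delta^2)$ part of $\tilde q$. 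The remainder $\mathcal{B}-\mathcal{B}|_{\delta=0}$ contributes only one order later and so affects neither the leading profile $-v_{\tilde\delta}$ nor the first corrector; and since --- unlike the genuinely singular perturbation \eqref{kdv-ks} --- the scaled St.\ Venant profile ODE is a same-order, analytic-in-$\delta$ perturbation of KdV, the analytic implicit function theorem, applied in a scale of Banach spaces of $\theta$-analytic functions, yields the desired two-parameter family $(\tilde\tau,\tilde u)_\delta(\cdot;a_0,k)$ with joint analytic dependence on $\theta,a_0,k$ and $\delta$. Reading off the first two orders then gives \eqref{kdvcompare}, namely $\tilde\tau_\delta=-v_{\tilde\delta}+O(\delta^2)$ and $\tilde c_\delta=\sigma_{\tilde\delta}+O(\delta^2)$, the absence of a $\delta^1$ term in $\tilde c_\delta$ reflecting vanishing of the first speed corrector exactly as $\sigma_1=0$ in \eqref{kdvsolnexpand}.
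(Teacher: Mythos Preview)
Your proposal is correct and follows essentially the same route as the paper. Both reduce to a scalar profile ODE via the conservation law $u=q-c\tau$, rescale in the variables \eqref{rescaled} to exhibit the problem as a regular analytic-in-$\delta$ perturbation of the KdV profile equation, and then obtain persistence of the cnoidal orbits under the selection principle $\kappa=\mathcal{G}(k)$, which is shown to coincide with the KdV--KS selection of Remark~\ref{r:selection}.

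The only notable difference is one of packaging. The paper carries out the expansion through explicit coordinate changes---in particular a near-identity change $\tilde T=-(\tilde\tau+\delta^2\int_0^{\tilde\tau}B)$ that absorbs a variable-coefficient second-order term---to reach the clean normal form \eqref{eq:red:f}, and then invokes standard Melnikov-type results for regular perturbations of planar Hamiltonian systems. You instead phrase the persistence step as Lyapunov--Schmidt about the KdV family plus the analytic implicit function theorem. These are equivalent here: the Melnikov integral is precisely the cokernel projection in your reduction, and the paper's explicit first-order source $((\tilde c+1)T_0-\tfrac12 T_0^2)'$ is not literally $T_0''+T_0''''$ but yields the same selection integral once the KdV profile equation is used---which is what you gesture at with ``after higher derivatives are reduced by $\mathcal{A}_0$''. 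The paper's explicit route has the advantage of making the $O(\tilde\delta^2)$ agreement with KdV--KS and the numerical prefactors in \eqref{kdvcompare} transparent; your abstract route would work equally well but you would still need to do the computation you flag as ``where essentially all the real work lies'' to pin down those constants.
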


For brevity, throughout the paper we shall often leave implicit the dependence on $(a_0,k)$ or $a_0$.

\br \label{regrmk}
\textup{
As we will see in the analysis below, the weakly unstable limit for the  St. Venant equations \eqref{swl} is a {\it regular perturbation}
of KdV, rather than a singular perturbation as in the KdV-KS case, a fact reflected in the stronger regularity conclusions of 
Theorem~\ref{maine} as compared to 
Proposition \ref{p:kdvsolnexpand}.
}
\er

Our next result concerns the spectral stability of the small amplitude periodic traveling wave solutions constructed
in Theorem~\ref{maine} when subject to arbitrary small localized (i.e. integrable) perturbations on the line.

\begin{theorem}[Limiting stability conditions]\label{mains}
For each $k\in\mathcal{P}$ there exists a neighborhood $\Omega_k\subset(0,1)$ of $k$ and  $\delta_0(k)>0$ such that for any $(a_0,\tilde k,\delta)\in\R\times\Omega_k\times(0,\delta_0(k))$ the non-degeneracy and spectral stability conditions (H1) and (D1)-(D3) hold for $(\tau,u)_\delta(\cdot; a_0, \tilde k)$,
along with slope condition \eqref{e:Lslope}.
In particular, $\delta_0(\cdot)$ can be chosen uniformly on compact subsets of $\mathcal{P}$.
Conversely for each $k\in(0,1)$ such that condition (A) holds but $\textrm{Ind}(k)>0$ (where \textrm{Ind} is defined as in \eqref{index}), there exists a neighborhood $\Omega_k\subset(0,1)$ of $k$ and  $\delta_0(k)>0$ such that if $(a_0,\tilde k,\delta)\in\R\times\Omega_k\times(0,\delta_0(k))$ then $(\tau,u)_\delta(\cdot; a_0, \tilde k)$ is spectrally unstable.
\end{theorem}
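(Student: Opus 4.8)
\textbf{Proof plan for Theorem \ref{mains}.} The strategy is to transfer the stability conclusions of Proposition \ref{p:kdvstab} and its converse from the KdV-KS equation \eqref{kdv-ks} to the St. Venant system \eqref{swl}, exploiting the fact—highlighted in Remark \ref{regrmk}—that in the rescaled coordinates \eqref{rescaled} the $F\to2^+$ limit is a \emph{regular} perturbation of KdV. Concretely, I would first set up the linearized operator $\mathcal{L}_\delta$ about the profile $(\tau,u)_\delta(\cdot;a_0,k)$ constructed in Theorem \ref{maine}, pass to the Bloch/Floquet description as in \S\ref{s:conditions}, and rescale it using \eqref{rescaled} so that the leading-order operator is exactly the linearized KdV operator $\mathcal{L}_\xi[T_0]$. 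Because \eqref{swl} is a $2\times2$ system rather than the scalar \eqref{kdv-ks}, the first step is to show that one of the two ``modes'' of the Bloch operator is uniformly spectrally separated from the origin (with real part bounded away from $0$ on the correct side) — this is the parabolic-smoothing/damping part, and it is where the slope condition \eqref{e:Lslope} enters; one checks \eqref{e:Lslope} holds automatically for the small-amplitude profiles of Theorem \ref{maine} since $u_x=O(\delta^3)$ while $F^{-2}\to 1/4$. On the complementary ``slow'' subspace the reduced operator is a regular $O(\delta^2)$-perturbation (after the time rescaling, an $O(\tilde\delta)$-perturbation) of $\mathcal{L}_\xi[T_0]$, with the same leading corrector structure as in \eqref{kdvks-evexpand}: in particular the $O(\delta)$ corrector to each nonzero simple KdV Bloch eigenvalue is, up to the fixed positive rescaling factor in \eqref{rescaled}, exactly $\lambda_1(\xi,\lambda_0)$, and the triple-zero eigenvalue at $\xi=0$ splits with the same Whitham characteristics $\alpha_j$.

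The second step is the genuinely delicate one: handling the neighborhood of the origin $(\xi,\lambda)=(0,0)$, where the unperturbed KdV operator has a non-semisimple triple eigenvalue. Here I would follow the spectral-preparation / Lyapunov–Schmidt reduction used in \cite{JNRZ1} for Proposition \ref{p:kdvstab}: project onto the $3$-dimensional total generalized eigenspace, expand the resulting $3\times3$ reduced matrix in $\xi$ and $\delta$, and show that after the rescaling the reduced matrix agrees to the needed order with the one appearing in \cite{JNRZ1}, so that its eigenvalues satisfy a dispersion relation of the form $\lambda \sim i\alpha_j\xi + (\text{negative definite in }\xi^2\text{ and }\delta)$. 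The condition $\mathrm{Ind}(k)<0$, together with condition (A) on $\Omega_k$, feeds in exactly as in \cite{JNRZ1} to guarantee that (D1)–(D3) and (H1) hold: (D3) follows from the dimension count $N=2$ (noted in the footnote to \S\ref{s:conditions}) together with the $\delta\neq0$ reduction killing one of the three KdV zero modes, (H1) from distinctness of the $\alpha_j$ in \eqref{kdvspecexpand}, (D2) from the negative-definite corrector plus the uniform separation of the fast mode, and (D1) from (D2) plus compactness of $\{|\xi|\ge \xi_0\}$ and the strict negativity of $\Re\lambda_1$ away from $(0,0)$ guaranteed by $\mathrm{Ind}(k)<0$. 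Uniformity of $\delta_0(\cdot)$ on compact subsets of $\mathcal{P}$ is then routine from continuity of all the expansions in $k$.

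For the converse statement, the argument is shorter: if condition (A) holds at $k$ but $\mathrm{Ind}(k)>0$, then there is some $\xi_*$ and some nonzero simple KdV Bloch eigenvalue $\lambda_0\in\sigma(\mathcal{L}_{\xi_*}[T_0])$ with $\Re\lambda_1(\xi_*,\lambda_0)>0$; by the regular-perturbation expansion \eqref{kdvks-evexpand} (transferred to \eqref{swl} as above, which makes sense on the slow subspace precisely because $\lambda_0$ is bounded away from $0$ and simple, hence unaffected by the fast mode and by the origin subtlety), the corresponding St. Venant Bloch eigenvalue has $\Re\lambda = \delta\,(\text{positive const})\Re\lambda_1(\xi_*,\lambda_0) + O(\delta^2)>0$ for all $\delta$ small, giving an unstable Floquet exponent and hence spectral instability on $L^2(\RM)$; continuity in $k$ yields the neighborhood $\Omega_k$. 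The main obstacle, as in \cite{JNRZ1}, is step two — controlling the $3\times3$ reduction near the origin and checking that the rescaling \eqref{rescaled} makes the St. Venant reduced matrix match the KdV-KS one to the order at which the sign of the real part is determined; once that matching is in place, stability and instability both follow by quoting the already-established analysis of $\mathrm{Ind}(k)$.
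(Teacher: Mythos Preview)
Your overall architecture---reduce the St.\ Venant Bloch problem to a perturbation of the KdV linearized operator, identify the first corrector with $\lambda_1(\xi,\lambda_0)$, and then invoke the analysis of \cite{JNRZ1} both away from and near the origin---is correct in spirit and lands on the same endpoint as the paper. But the technical route the paper takes is genuinely different from your operator-theoretic projection plan. The paper does \emph{not} perform a fast/slow spectral decomposition of the Bloch operator followed by a Lyapunov--Schmidt reduction. Instead it (i) writes the spectral problem as a first-order ODE system and proves, through a sequence of increasingly refined diagonalizations and energy estimates (Lemma~\ref{lemma1}, Lemma~\ref{lemma2}, Proposition~\ref{Prop1}), that any unstable eigenvalue must satisfy $|\lambda|=O(\delta^3)$, $\Re\lambda=O(\delta^4)$; then (ii) rescales $\lambda=\delta^3\Lambda$ and compares the \emph{Evans function} $E_{SV}$ of the rescaled system directly with the KdV--KS Evans function $E_{KdV-KS}$ (Proposition~\ref{p:evansexpand}), showing they agree up to a nonvanishing prefactor and errors $O(\tilde\delta^2(|\Lambda|^2+|\xi|^2))+O(\tilde\delta^3(|\Lambda|+|\xi|))$; and (iii) quotes the Evans-function expansions of \cite{JNRZ1} (which are themselves Evans-based, not Lyapunov--Schmidt) to conclude. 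Your plan buys conceptual transparency; the paper's buys a very concrete matching at the level of determinants, which makes the ``same corrector $\lambda_1$'' claim and the near-origin analysis literally identical to \cite{JNRZ1} rather than analogous.

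The main soft spot in your plan is step one, the ``fast mode is uniformly separated'' assertion. As the paper's three-step confinement argument shows, establishing that nothing unstable lives outside an $O(\delta^3)$-ball is not a one-line damping estimate: it requires first an $O(1)$ bound, then a consistent-splitting/WKB argument to push to $o(1)$, and finally a bootstrap using KdV spectral stability to reach $O(\delta^3)$. Your description does not indicate how you would produce the analogue of this confinement at the operator level, and absent it you cannot localize to the slow subspace. Two smaller corrections: the slope condition \eqref{e:Lslope} does \emph{not} enter the spectral argument here at all---it is verified a posteriori from $u_x=O(\delta^3)$ and is relevant only for the nonlinear theory of \cite{JZN}---and the near-origin analysis in \cite{JNRZ1} that you propose to follow is carried out via Evans-function expansion (their Lemmas~4.4, 4.6, 4.7), not via a $3\times3$ Lyapunov--Schmidt matrix, so your plan would require either reproving that part in projection language or switching to Evans functions at that stage anyway.
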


From Theorem~\ref{maine}, it follows in 
particular that 
%
our 
roll waves have asymptotic period $\sim \delta^{-1}$ and 
amplitude $\sim \delta^2$ in the weakly unstable limit $F\to 2^+$; that is,
this is a {\it long-wave, small-amplitude} limit.
Likewise, $u_x\sim \delta^3$ so that 
\eqref{e:Lslope} is automatically satisfied for $\delta\ll 1$.
In Theorems~\ref{maine} and~\ref{mains}, rescaling period and amplitude to order one, we find that in this regime \emph{existence
and stability} are indeed well-described by KdV-KS $\to$ KdV:
to zeroth order by KdV, and to first correction by KdV-KS.  
%

Combining Theorem \ref{mains} with Proposition \ref{p:blake},
and untangling coordinate changes, we obtain the
following 
{\it  essentially complete description of stability of viscous St. Venant roll-waves
in the 
limit $F\to 2^+$.}

\begin{corollary}[Limiting stability region]\label{c:stabclass}
For $\delta=\sqrt{F-2}$ sufficiently small, uniformly for $\delta\,X$ on compact sets, periodic traveling waves of \eqref{swl}
are stable for (Lagrangian) periods $X \in \tfrac{\nu^{1/2}}{\tau_0^{5/4}\delta}(X_l,X_r)$
and unstable for $X \in \tfrac{\nu^{1/2}}{\tau_0^{5/4}\delta}[X_{\min},X_l)$ and $X\in \tfrac{\nu^{1/2}}{\tau_0^{5/4}\delta}(X_r,X_{\max}]$ 
where $X_{\min}$, $X_l$, $X_r$, $X_{\max}$ are as in Proposition~\ref{p:blake}, in particular, 
$X_{\min}\approx 6.284$, $X_l\approx 8.44$, $X_r\approx 26.1$, and $X_{\max}\approx 48.3$.
\end{corollary}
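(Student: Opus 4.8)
The plan is to deduce Corollary~\ref{c:stabclass} by combining Theorem~\ref{mains} with the rigorous numerical classification in Proposition~\ref{p:blake}, after untangling the rescaling \eqref{rescaled}. First I would fix attention on the set $\mathcal{P}$ and its complement within $[k_{\min},k_{\max}]$. By Proposition~\ref{p:blake}, $\mathcal{P}\cap[k_{\min},k_{\max}]=(k_l,k_r)$ and $\mathrm{Ind}(k)>0$ on $[k_{\min},k_{\max}]\setminus[k_l,k_r]$; moreover condition (A) holds throughout $[k_{\min},k_{\max}]$. So every $k$ in $[k_{\min},k_{\max}]$ falls into one of the two mutually exclusive cases covered by Theorem~\ref{mains}: either $k\in\mathcal{P}$, giving (H1), (D1)--(D3) and the slope condition (hence diffusive spectral stability) for $(\tau,u)_\delta(\cdot;a_0,\tilde k)$ for all $\delta$ small, or condition (A) holds but $\mathrm{Ind}(k)>0$, giving spectral \emph{instability} for all $\delta$ small.

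Next I would promote this pointwise-in-$k$ dichotomy to a uniform statement over $k$ ranging in a compact set. Theorem~\ref{mains} already provides neighborhoods $\Omega_k$ and thresholds $\delta_0(k)$, with $\delta_0(\cdot)$ bounded below on compacts; a standard finite-subcover argument on the compact set $[k_{\min},k_l-\eta]\cup[k_l+\eta,k_r-\eta]\cup[k_r+\eta,k_{\max}]$ (for fixed small $\eta>0$) yields a single $\delta_\star>0$ below which stability holds on the stable portion and instability on the unstable portions. Since the period map $X(k)=2K(k)/\mathcal{G}(k)$ is a continuous strictly monotone bijection (Proposition~\ref{p:kdvsolnexpand}), working with $\delta X$ on a compact set of $(2\pi,+\infty)$ bounded away from the endpoints is equivalent to working with $k$ on a compact subset of $(0,1)$; in the limiting-period variable this translates the intervals above into $X(k)\in(X_l,X_r)$ for stability and $X(k)\in[X_{\min},X_l)\cup(X_r,X_{\max}]$ for instability, with the numerical values recorded in Proposition~\ref{p:blake}.

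Finally I would untangle the coordinate change. From Theorem~\ref{maine}, in the rescaled variables the traveling wave $(\tilde\tau,\tilde u)_\delta(\theta)$ with $\theta=Y-\tilde c_\delta S$ is $X(\tilde k)$-periodic in $\theta$ (to leading order, and in any case the period in $\theta$ depends smoothly on $\delta$ and tends to $X(\tilde k)$). Inverting the spatial rescaling $Y=\tau_0^{5/4}\delta\nu^{-1/2}(x-c_0 t)$, a $\theta$-period of $X$ corresponds to a Lagrangian $x$-period of $\tfrac{\nu^{1/2}}{\tau_0^{5/4}\delta}X$, which is exactly the prefactor appearing in the statement; the hypothesis ``$\delta X$ on compact sets'' is precisely what keeps the underlying $\theta$-period, hence $\tilde k$, in a fixed compact subset of $(2\pi,+\infty)$, legitimizing the uniform choice of $\delta_\star$ from the previous step. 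Spectral stability/instability is invariant under this affine change of space-time variables (it merely rescales the Floquet parameter and the spectral plane by positive constants), so the conclusions of Theorem~\ref{mains} transfer verbatim. The only genuinely delicate point — and the one I would be most careful about — is the matching of endpoints: one must check that the open/closed structure $(X_l,X_r)$ stable, $[X_{\min},X_l)$ and $(X_r,X_{\max}]$ unstable is consistent with the openness of $\mathcal{P}$ and with Proposition~\ref{p:blake}'s assertion that $\mathrm{Ind}$ is strictly positive off $[k_l,k_r]$ and strictly negative on $\mathcal{P}$, and that the word ``essentially'' in the Corollary correctly acknowledges that $k\to 0,1$ (equivalently $X\to 2\pi$ and $X\to\infty$) lie outside the range where Proposition~\ref{p:blake} applies, so no claim is made there.
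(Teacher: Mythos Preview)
Your proposal is correct and follows essentially the same approach as the paper, which presents the corollary as an immediate consequence of combining Theorem~\ref{mains} with Proposition~\ref{p:blake} after untangling the rescaling \eqref{rescaled} (cf.\ also \eqref{newscale2}, which gives the exact period relation $X_\delta=\tfrac{\nu^{1/2}}{\tau_0^{5/4}\delta}X$). One small correction: the word ``essentially'' appears in the text preceding the corollary, not in the corollary statement itself, and in the paper's parametrization the rescaled period is exactly $X$ rather than merely tending to $X(\tilde k)$ as $\delta\to 0$.
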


\subsubsection{Large-Froude number limit $F\to+\infty$}\label{s:intro_infty}
We complement the above weakly nonlinear analysis by continuing into the large amplitude regime, beginning with a study
of the distinguished large-Froude number limit $F\to +\infty$.
The description of this limit requires a choice of scaling in the parameters indexing the family of waves. To this end, we first emphasize that a suitable 
parametrization,
available for the full range of Froude numbers, is given by $(q,X)$, where $q:= -c\bar \tau-\bar u$ is a constant of integration in the associated traveling-wave ODE, corresponding to {\it total outflow}, and $X$ is the {\it period}. 
As discussed in Section \ref{s:per} below, the associated two-parameter family of possible scalings may be reduced by the requirements that (i) the limiting system be nontrivial, and (ii) the limit be a regular perturbation, to a one-parameter family indexed by $\alpha\geq -2$, given explicitly via
\be\label{scales}
\tau= a F^\alpha, \quad
u= b F^{-\alpha/2}, \quad
c= c_0 F^{-1 - 3\alpha/2}, \quad
X= X_0 F^{-1/2 - 5\alpha/4}, \quad
q=q_0 F^{-\alpha/2}.
\ee
where $a,b:\RM\to\RM$ and $c_0,X_0,q_0$ are real constants. Note, from the relation $X=1/k$ between period and wave number $k$,
that we have also $k= k_0 F^{1/2 + 5\alpha/4}$.

Under this rescaling, 
moving to the co-moving frame $(x,t)\mapsto (k(x-ct),t)$,
we find
that $X$-periodic traveling wave solutions
of \eqref{swl} correspond to $X_0$-periodic solutions of
the {\it rescaled profile equation}
\be\label{rprof}
 a''= (- a^2/c_0k_0^2 \nu) 
\big(k_0  a' F^{-3/2-3\alpha/4}(c_0^2-1/ a^3) - 1
+  a(q_0 - c_0F^{-1} a)^2 - 2c_0 k_0^2 \nu ( a')^2/ a^3\big),
\ee
where $b$ is recovered from $a$ via the identity
$
 b= -q_0 - c_0 F^{-1} a;
$
see Section \ref{s:per} below for details.
Noting that the behavior of  $F^{-3/2-3\alpha/4}$ as $F\to\infty$ depends on whether $\alpha=-2$ or $\alpha>-2$, one finds two different limiting
profile equations in the limit $F\to\infty$: a (disguised\footnote{Indeed, the profile equation in this case is Hamiltonian in the unknown $h=\frac{1}{a}$}) 
Hamiltonian equation supporting a selection principle, when $\alpha>-2$, and a non-Hamiltonian equation in the boundary case $\alpha=-2$; see Section \ref{s:per} below.  
Further, by elementary phase plane analysis when $\alpha>-2$
or direct numerical investigation when $\alpha=-2$, periodic solutions of the limiting profile equations are seen to exist as $2$-parameter families parametrized
by the period $X_0$ and the discharge rage $q_0$.
Noting that, by design, the rescaled profile equation \eqref{rprof} is a \emph{regular perturbation} of the appropriate limiting profile equation for all $\alpha\geq -2$,
we readily obtain the following asymptotic description  

\begin{theorem}\label{t:limprof}
For sufficiently large $F$, 
generically, 
$X_0$-periodic profiles of \eqref{rprof}, obtained under the scaling \eqref{scales}, emerge for each $\alpha\geq -2$ from $X_0$-periodic solutions of the appropriate limiting profile equation obtained by taking $F\to\infty$ in \eqref{rprof} and, when $\alpha=-2$, satisfying a suitable selection principle.
\end{theorem}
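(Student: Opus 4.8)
The plan is to exploit the phrasing of the theorem: it asserts that periodic profiles of the rescaled equation \eqref{rprof} \emph{emerge} from periodic profiles of the limiting equation, which is precisely the conclusion of a standard Implicit Function Theorem argument once one has (i) identified the correct limiting equation for each $\alpha$, (ii) verified that \eqref{rprof} is a regular (smooth, in fact analytic) perturbation of it in $1/F$, and (iii) checked the appropriate nondegeneracy of the limiting periodic orbit. First I would recast \eqref{rprof} as a first-order system in $(a,a')$ on the phase plane and observe that, setting $\mu:=F^{-3/2-3\alpha/4}$ (together with the genuinely small parameter $F^{-1}$ appearing in the friction-type term $a(q_0-c_0F^{-1}a)^2$), the right-hand side depends analytically on $(a,a',\mu,F^{-1})$. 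In the case $\alpha>-2$ one has $\mu\to0$ as $F\to\infty$, and the limiting system is the $F\to\infty$ limit of \eqref{rprof}; as the excerpt notes (with $h=1/a$) it is Hamiltonian, so its periodic orbits form a one-parameter family inside each period annulus, with $X_0$ monotone along the family except possibly at isolated points. In the boundary case $\alpha=-2$ one has $-3/2-3\alpha/4=0$, so $\mu\equiv1$ and does not vanish; the limiting equation is then the non-Hamiltonian equation obtained by dropping only the $F^{-1}$ friction correction, and periodic orbits are located numerically as a two-parameter family in $(X_0,q_0)$.

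The core step is the perturbation argument. Fix $\alpha\geq-2$ and a limiting $X_0$-periodic solution $a_\infty(\cdot;X_0,q_0)$. Introduce the period-map / Poincar\'e-return framework: pick a transverse section to the flow, and consider the displacement map $\mathcal{D}(a_0;F^{-1})$ measuring the failure of the orbit through $a_0$ on the section to close up after advancing time $X_0$ (or, equivalently, work with the time-$X_0$ map and ask for a fixed point, simultaneously solving for the period). For $\alpha>-2$ the limiting flow is Hamiltonian, so \emph{every} orbit in the annulus is periodic but with period depending on the energy level; the natural unknowns are then the section-coordinate together with the period $X_0$ itself, and one imposes a selection principle — exactly as in Remark~\ref{r:selection} and Proposition~\ref{p:kdvsolnexpand} for KdV-KS — namely that the $O(\mu)$ (and $O(F^{-1})$) corrections, when projected onto the Hamiltonian's conserved quantity, integrate to zero around the limiting orbit. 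This is a scalar equation in $q_0$ (or in the energy level); its solvability requires the relevant derivative of that mean-zero condition to be nonzero, which is the "generic" hypothesis flagged in the statement. For $\alpha=-2$ there is no Hamiltonian structure and hence no selection principle is needed: periodic orbits are already isolated (for generic $(X_0,q_0)$) as hyperbolic or at least nondegenerate limit cycles, so the displacement map has a simple zero with $\partial_{a_0}\mathcal{D}\neq0$, and the ordinary Implicit Function Theorem applies directly in the single variable $F^{-1}$.

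Concretely the steps are: (1) write \eqref{rprof} as an analytic family of planar vector fields $X_F$ with $X_\infty$ the appropriate limiting field; (2) for $\alpha>-2$, verify Hamiltonian structure of $X_\infty$ in the variable $h=1/a$ and compute the Melnikov-type mean-zero integral that encodes persistence of the periodic orbit at first order in the perturbation, obtaining the selection principle; (3) check the nondegeneracy condition (nonvanishing of the derivative of the Melnikov function, resp. nonvanishing of $\partial_{a_0}\mathcal D$ when $\alpha=-2$) — this is where the word "generically" in the statement does its work, and one records that for $\alpha=-2$ the check is done numerically; (4) apply the Implicit Function Theorem in a Banach space of $X_0$-periodic functions (or, equivalently, in finitely many variables via the return map) to conclude the existence, for $F$ large, of a nearby genuine $X_0$-periodic profile of \eqref{rprof}, analytic in $1/F$, converging to $a_\infty$; (5) translate back through the scaling \eqref{scales} and the change of frame $(x,t)\mapsto(k(x-ct),t)$ to recover the corresponding periodic traveling wave of \eqref{swl}.

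The main obstacle I anticipate is step (3) in the case $\alpha>-2$: identifying the correct conserved quantity of the limiting Hamiltonian profile equation, computing its variation under the $O(\mu)$ and $O(F^{-1})$ perturbations, and showing the resulting Melnikov (selection) function is nondegenerate along the relevant family — that is what legitimizes the phrase "satisfying a suitable selection principle" and is the analogue, in the large-$F$ regime, of the delicate selection principle $\kappa=\mathcal{G}(k)$ of Proposition~\ref{p:kdvsolnexpand}. In the degenerate boundary case $\alpha=-2$ the analytic structure is less favorable (no Hamiltonian crutch), but there the persistence reduces to nondegeneracy of an isolated limit cycle, which, lacking a closed form, is simply verified by the numerical phase-plane study referenced in Section~\ref{s:per}; the only subtlety is the honest statement that this last point is numerical rather than analytic, consistent with the hybrid rigorous/numerical character announced in the introduction.
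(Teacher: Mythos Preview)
Your proposal is correct and follows essentially the paper's argument in Section~\ref{s:per}: a regular perturbation in powers of $F^{-1}$, with the $\alpha>-2$ limit Hamiltonian in $h=1/a$ and requiring a Melnikov-type selection, and the $\alpha=-2$ limit non-Hamiltonian with isolated periodic orbits persisting by the ordinary Implicit Function Theorem. Two small remarks: in the paper the selection principle for $\alpha>-2$ determines the \emph{wave speed} $c_0$ (via the orthogonality condition $\int_0^1 (1/a)'\bigl(k_0c_0^2\,a'+k_0(a^{-2}/2)'\bigr)\,dx=0$), not $q_0$ or the energy level, leaving a two-parameter family in $(\mu,q_0)$; and you have correctly attached the selection principle to the Hamiltonian case $\alpha>-2$, consistent with the paper's actual discussion in Section~\ref{s:per} and the sentence preceding the theorem, even though the theorem statement as printed attaches it to $\alpha=-2$.
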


Next, we study the spectral stability of a pair of fixed periodic profiles $(\bar{a},\bar{b})$ constructed above.  
One may readily check that, under the further rescaling  
$Fb=\check b$ and $F^{1/2+\alpha/4} \lambda=\Lambda$, the linearized spectral problems around such a periodic profile $(\bar{a},\bar{b})$
is given by
\begin{equation}\label{e:specF->infty}
\begin{aligned}
\Lambda  a -c_0k_0 a'-k_0\check b'&=0\\
F^{-3/2-3\alpha/4}\Big(\Lambda \check b-c_0k_0 \check b'- k_0 (a/\bar a^3)'\Big)&=
-2F^{-1}\bar a \bar b \check b - \bar b^2 a + 
\nu k_0^2 (\check b'\bar a^2 + 2 c_0 \bar a'a/\bar a^3)',
\end{aligned}
\end{equation}
where 
$(a,b)$ denotes the perturbation of the underlying state $(\bar{a},\bar{b})$.
The limiting spectral problems obtained by taking $F\to\infty$ again depend on whether $\alpha=-2$ or $\alpha>-2$.  In particular,
we note the spectral problem is Hamiltonian, and hence possesses a natural four-fold symmetry about the real- and imaginary-axes, when $\alpha>-2$; see
\ref{s:per} below for details.
Since \eqref{e:specF->infty} is, again by design, a \emph{regular perturbation} of the appropriate limiting spectral problems for all $\alpha\geq -2$, 
we obtain by standard perturbation methods (e.g., the spectral/Evans function convergence results of \cite{PZ}) the following sufficient \emph{instability condition}.

\begin{corollary}\label{t:limstab}
For all $\alpha\geq -2$, under the rescaling \eqref{scales}, the profiles of \eqref{rprof} converging as $F\to\infty$ to solutions of the appropriate limiting profile equation, as described in Theorem \ref{t:limprof} are \emph{spectrally unstable} if the appropriate limiting spectral problem about the limiting profiles admit $L^2(\RM)$-spectrum in $\Lambda$ with positive real part.
\end{corollary}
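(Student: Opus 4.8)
The idea is that, as always in an Evans-function/Bloch-wave framework, spectral \emph{instability} is an open condition: a point of $L^2(\R)$-spectrum with strictly positive real part for the limiting problem survives any sufficiently small \emph{regular} perturbation. I would accordingly (i) single out the offending Bloch parameter and spectral value of the limiting problem, (ii) build the associated periodic Evans function, (iii) show the Evans functions converge as $F\to\infty$ uniformly on compact subsets of the spectral plane, (iv) invoke Rouch\'e's theorem to produce a zero persisting in the right half-plane for all $F$ large, and (v) unwind the rescalings, which preserve the sign of the real part of the spectral parameter.

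\emph{Setup and Bloch reduction.} First, by Theorem \ref{t:limprof}, I would fix a limiting $X_0$-periodic profile $(\bar a_\infty,\bar b_\infty)$ of the appropriate limiting profile equation and the emergent family $(\bar a_F,\bar b_F)$ ($F$ large) of $X_0$-periodic solutions of \eqref{rprof}; by the regular-perturbation structure of \eqref{rprof} one has $(\bar a_F,\bar b_F)\to(\bar a_\infty,\bar b_\infty)$ in $C^1$. By the Floquet/Bloch theory recalled in Section \ref{s:conditions}, the $L^2(\R)$-spectrum (in $\Lambda$) of the rescaled spectral problem \eqref{e:specF->infty} about $(\bar a_F,\bar b_F)$ — and likewise of its $F\to\infty$ limit about $(\bar a_\infty,\bar b_\infty)$ — is the union over $\xi\in[-\pi/X_0,\pi/X_0)$ of the spectra of the corresponding $\xi$-Bloch problems on $[0,X_0]$, each having discrete spectrum (compact resolvent). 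By hypothesis there exist $\xi_*$ and $\Lambda_*$ with $\Re\Lambda_*>0$ such that $\Lambda_*$ lies in the $\xi_*$-Bloch spectrum of the limiting problem.

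\emph{Evans functions, convergence, persistence.} Next, I would recast the $\xi_*$-Bloch problem as a first-order linear ODE system $W'=\mathbb{A}(x;\Lambda)W$ on $[0,X_0]$ with coefficients affine — hence entire — in $\Lambda$, and let $D_F(\,\cdot\,,\xi_*)$ and $D_\infty(\,\cdot\,,\xi_*)$ be the associated periodic Evans functions for the problem at parameter $F$ and for its limit: each is entire in $\Lambda$, has zero set exactly the $\xi_*$-Bloch spectrum, and — by the discreteness just noted — is not identically zero. The crucial input is that, by the very design of the scalings \eqref{scales} (see Section \ref{s:per}), the passage $F\to\infty$ in \eqref{e:specF->infty} is a \emph{regular} perturbation; together with the $C^1$-convergence of the profiles this makes $\mathbb{A}$ converge uniformly for $x\in[0,X_0]$ and $\Lambda$ in compact sets, whence the monodromy matrices converge and $D_F(\,\cdot\,,\xi_*)\to D_\infty(\,\cdot\,,\xi_*)$ uniformly on compact subsets of the $\Lambda$-plane — precisely the spectral/Evans-function convergence of \cite{PZ}. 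Choosing $r>0$ with $\overline{B(\Lambda_*,r)}\subset\{\Re\Lambda>0\}$, with $\Lambda_*$ the only zero of $D_\infty(\,\cdot\,,\xi_*)$ in $\overline{B(\Lambda_*,r)}$ and no zero on $\partial B(\Lambda_*,r)$, Rouch\'e's theorem then gives, for all $F$ large, a zero $\Lambda(F)\in B(\Lambda_*,r)$ of $D_F(\,\cdot\,,\xi_*)$ — i.e.\ a $\xi_*$-Bloch spectral value of \eqref{e:specF->infty} at parameter $F$ with $\Re\Lambda(F)>0$ — so that \eqref{e:specF->infty} has $L^2(\R)$-spectrum of positive real part for all such $F$.

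\emph{Back to \eqref{swl}, and the main obstacle.} Finally I would unwind the changes of variables: passing to the comoving rescaled frame $(x,t)\mapsto(k(x-ct),t)$ and rescaling the perturbation leaves the sign of $\Re$ of the spectral parameter unchanged, and the substitution $\Lambda=F^{1/2+\alpha/4}\lambda$ multiplies $\lambda$ by the positive real number $F^{1/2+\alpha/4}$ (finite, and $\geq 1$ since $\alpha\geq-2$), so the operator linearizing \eqref{swl} about the original profile $(\tau,u)=(\bar a_F\,F^{\alpha},\bar b_F\,F^{-\alpha/2})$ has $L^2(\R)$-spectrum of strictly positive real part for all $F$ large; the wave is spectrally unstable. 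The single genuinely non-mechanical point — everything else being a routine application of \cite{PZ} and Rouch\'e — is the \emph{regular}-perturbation claim: when $\alpha>-2$ the prefactor $F^{-3/2-3\alpha/4}$ multiplying the leading part of the second equation of \eqref{e:specF->infty} tends to $0$, and one must check (carried out in Section \ref{s:per}) that, once the problem is written as a first-order system in suitable variables, its order does not drop in the limit — the Hamiltonian/symplectic structure of the $\alpha>-2$ limit being what guarantees this; the boundary case $\alpha=-2$ is immediate since there the prefactor equals $1$. Note finally that only the single Bloch parameter $\xi_*$ is used, so no uniformity in $\xi$ is needed, and that — as the statement makes explicit — this yields only a \emph{sufficient} condition for instability.
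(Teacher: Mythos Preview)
Your proposal is correct and follows essentially the same approach as the paper, which dispatches the corollary in a single sentence by invoking the regular-perturbation structure of \eqref{e:specF->infty} together with the spectral/Evans-function convergence results of \cite{PZ}. You have simply spelled out what the paper leaves implicit---the Bloch reduction, the construction of the periodic Evans functions, their uniform convergence on compacta, and the Rouch\'e argument---and correctly flagged the one genuine input (the regularity of the $F\to\infty$ limit, handled in Section~\ref{s:per}).

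One minor remark: your attribution of the non-degeneracy of the $\alpha>-2$ limit to the ``Hamiltonian/symplectic structure'' is slightly off. The Hamiltonian character of the limiting problem is noted in the paper in connection with the four-fold symmetry of the spectrum, not as the mechanism ensuring regularity of the perturbation. What actually matters is that, after the rescaling $\check b=Fb$, the highest-order term $\nu k_0^2(\bar a^{-2}\check b')'$ in the second equation of \eqref{e:specF->infty} carries no vanishing prefactor, so the principal part of the first-order system is preserved in the limit; this is independent of any symplectic structure. This does not affect the validity of your argument.
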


As clearly discussed in Section \ref{s:per}, for $\alpha>-2$ the limiting profile equation, associated selection principle, and limiting spectral
problem are \emph{independent} of the value of $\alpha$.  
%
%
Thus, the above instability criterion for $F\to +\infty$ can be determined by the study of just two
model equations: one for $\alpha=-2$ and one for any other fixed $\alpha>-2$.
Both regimes include particularly physically interesting choices since $\alpha=0$ corresponds to holding the outflow $q$ constant as $F\to\infty$, while
$\alpha=-2$ corresponds to holding the Eulerian period $\Xi(X_0)$ constant as $F\to\infty$\footnote{
Here, the Eulerian scaling relation $\Xi=\Xi_0 F^{-\frac1 2-\frac\alpha 4}$ 
follows by $\Xi=\int_0^X \bar \tau(x)dx$, \eqref{scales}, and convergence of the profile $a$.}.
In Section \ref{s:numinf} we investigate numerically the stability of the limiting spectral problems
in both the cases $\alpha=-2$ and $\alpha=0$.  This numerical study indicates that, in both these cases, all periodic solutions of the appropriate limiting
profile equations are \emph{spectrally unstable} and hence, by Corollary \ref{t:limstab}, that  spectrally stable periodic traveling wave solutions
of the viscous St. Venant system \eqref{swl} do not exist for sufficiently large Froude numbers; see Figure \ref{comparison}.  
%

\medskip

{\bf Numerical Observation 1.} {\it For both $\alpha>-2$ and $\alpha=-2$, 
the limiting eigenvalue equations have strictly unstable spectra, hence
converging profiles are spectrally unstable for $F$ sufficiently large.}

\medskip
\begin{figure}[htbp]
 \begin{center}
$
\begin{array}{lccr}
(a) \includegraphics[scale=0.22]{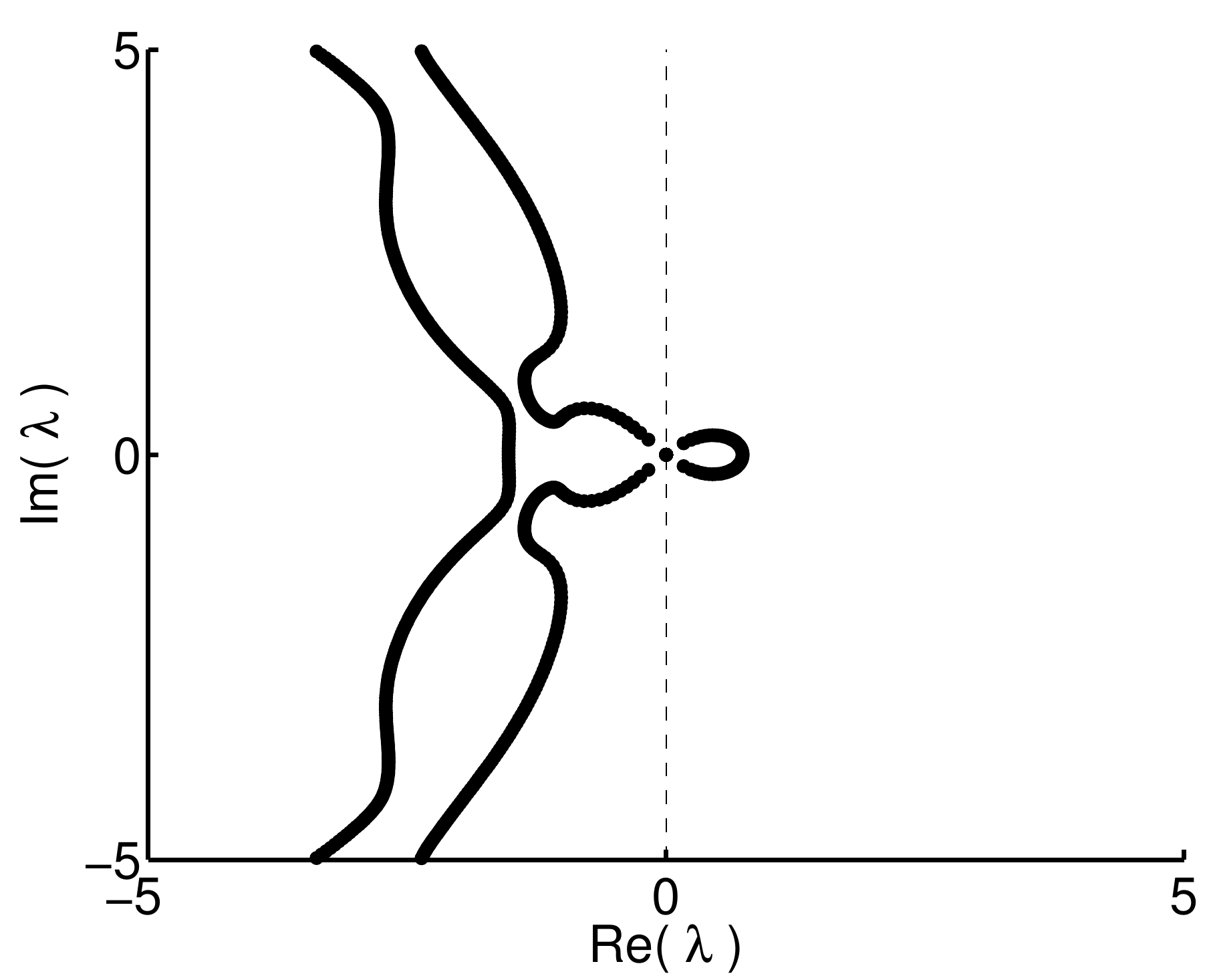}&\quad&&(b) \includegraphics[scale=0.22]{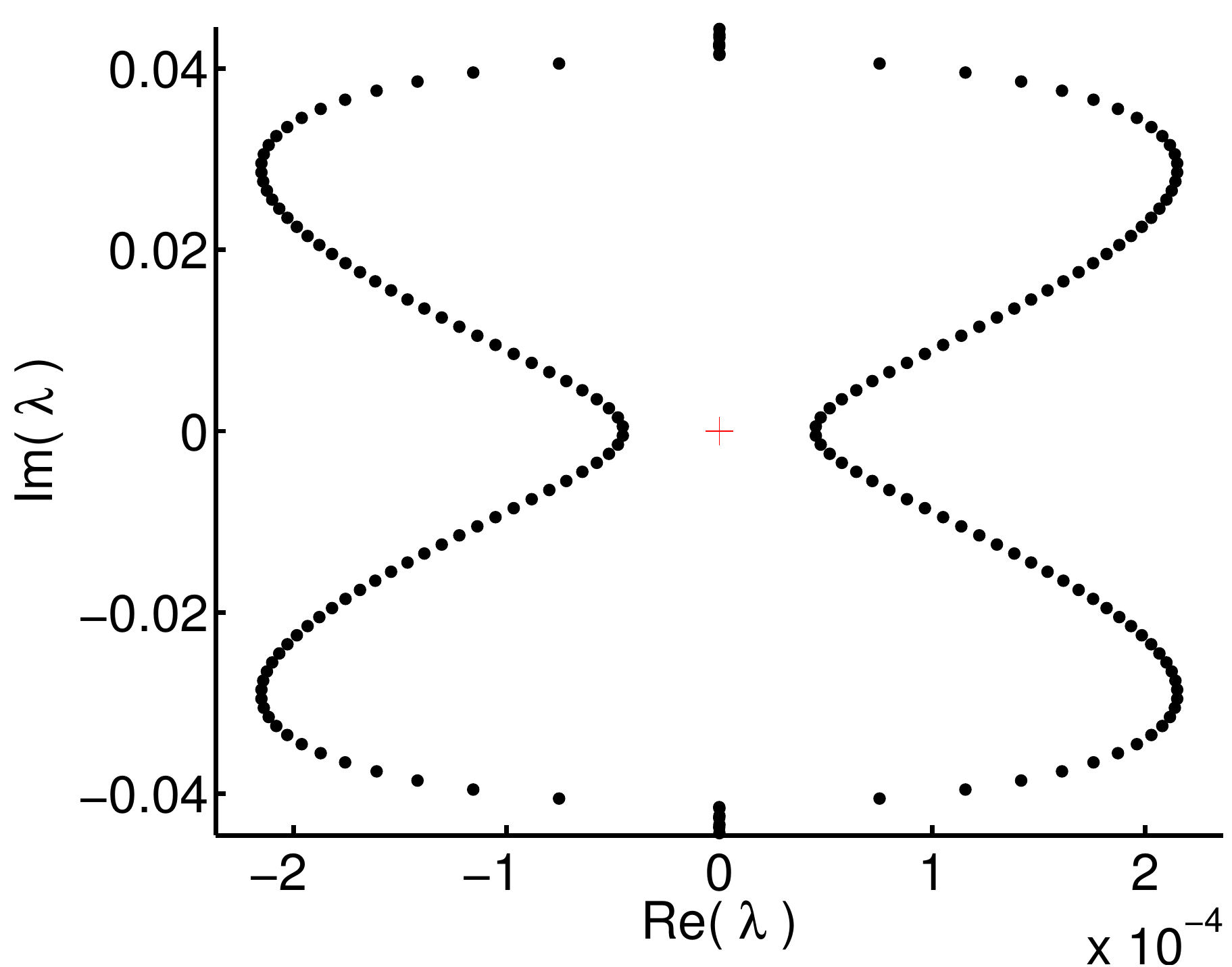}
\end{array}
$
\end{center}
\caption{In (a) and (b) we plot a numerical sampling of the (unstable) spectrum corresponding to the $F\to\infty$ limiting spectral problems
for the cases $\alpha=-2$ and $\alpha>-2$, respectively, for a representative periodic stationary solution of the appropriate limiting profile equation.
}
\label{comparison}\end{figure}

%
%
%

\subsubsection{Intermediate $F$}\label{intro:intermediate_F}
%
%

We complete our stability investigation in Section \ref{numerical:intermediate} by carrying out a numerical study for $F$ bounded away from the distinguished values
$2$ and $+\infty$ of the $L^2(\RM)$-spectrum of the linearized operator
obtained from linearizing \eqref{swl} about a given periodic traveling wave solution.
%
For $F$ relatively small ($2<F\le 4$), we find, unsurprisingly, 
a smooth continuation of the picture for $F\to 2^+$, featuring a single band of stable periods between two concave upward curves; see Fig. \ref{fig476} (c).
However, continuing into the large-but-not-infinite regime ($2.5\leq F\leq 100$),
we find considerable additional structure beyond that described in
Numerical Observation 1.


Namely, for $\alpha\in[-2,0]$ we see that the stability region is enclosed in a lens-shaped region between two smooth concave upward curves corresponding to the lower stability boundary and an upper high-frequency instability boundary, pinching off at a special value $F_*(\alpha)$ after which, consistent with Numerical
Observation 1, stable roll waves no longer exist;
see Figure \ref{fig476}(a) for the case $\alpha=-2$.  Examining these curves further for different values of $\alpha$, we find that they obey a remarkably
simple power-law description in terms of $F$, $q$, $\alpha$, and $X$; see Figure \ref{fig476}(b) for an example log-log plot in the case $\alpha=-2$.  
The general description of this power-law behavior is provided by the following.

\medskip
{\bf Numerical Observation 2.} {\it Both lower and upper stability boundaries 
appear for $F\gg1$ to be governed by \emph{universal power laws}
$c_1 \log F + c_2 \log q + c_3 \log \frac{X}{\nu}= d$, \emph{independent of 
parameters $-2\leq \alpha\leq 0$}, $\nu>0$, 
where for the lower boundary, $c_1 = 0.692$, $c_2 = -3.46$, $c_3 = 1$, 
and $d=0.3$, and for the upper boundary, $c_1 = 0.791$, $c_2 = -1.73$, 
$c_3 = 1$, and 
$d = 6.22$: see Figure \ref{fig462}.
Values $\alpha>0$ were not computed.
}

\medskip

%
%


Together with the small-$F$ description of Theorem \ref{mains}, these 
observations give an \emph{essentially complete description} of stability of periodic 
roll wave solutions of the St. Venant equations \eqref{swe},
for $-2\leq \alpha\leq 0$.

\begin{figure}[htbp]
 \begin{center}
$
\begin{array}{lcr}
(a) \includegraphics[scale=0.24]{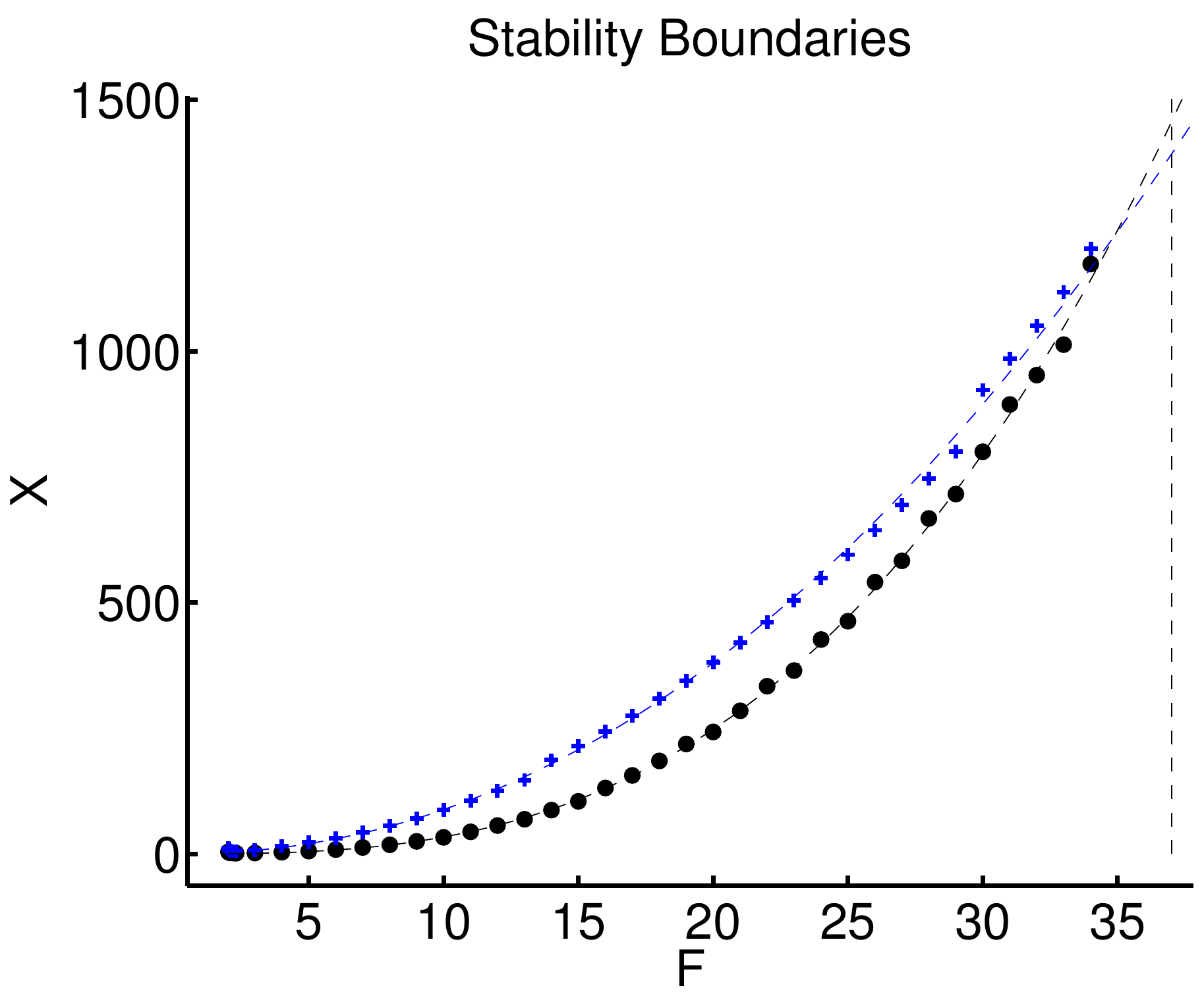}
&(b) \includegraphics[scale=0.24]{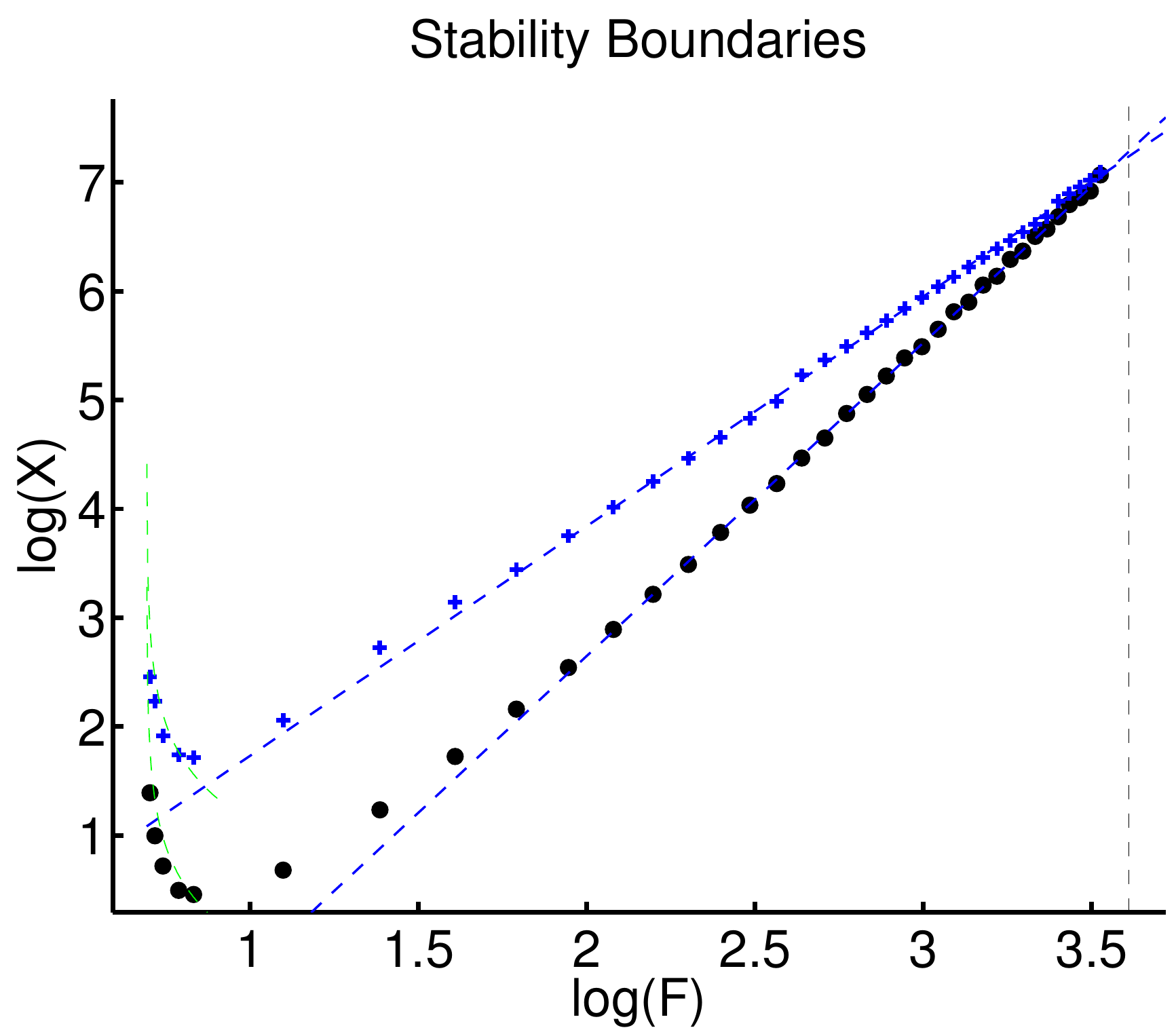}
&(c) \includegraphics[scale=0.24]{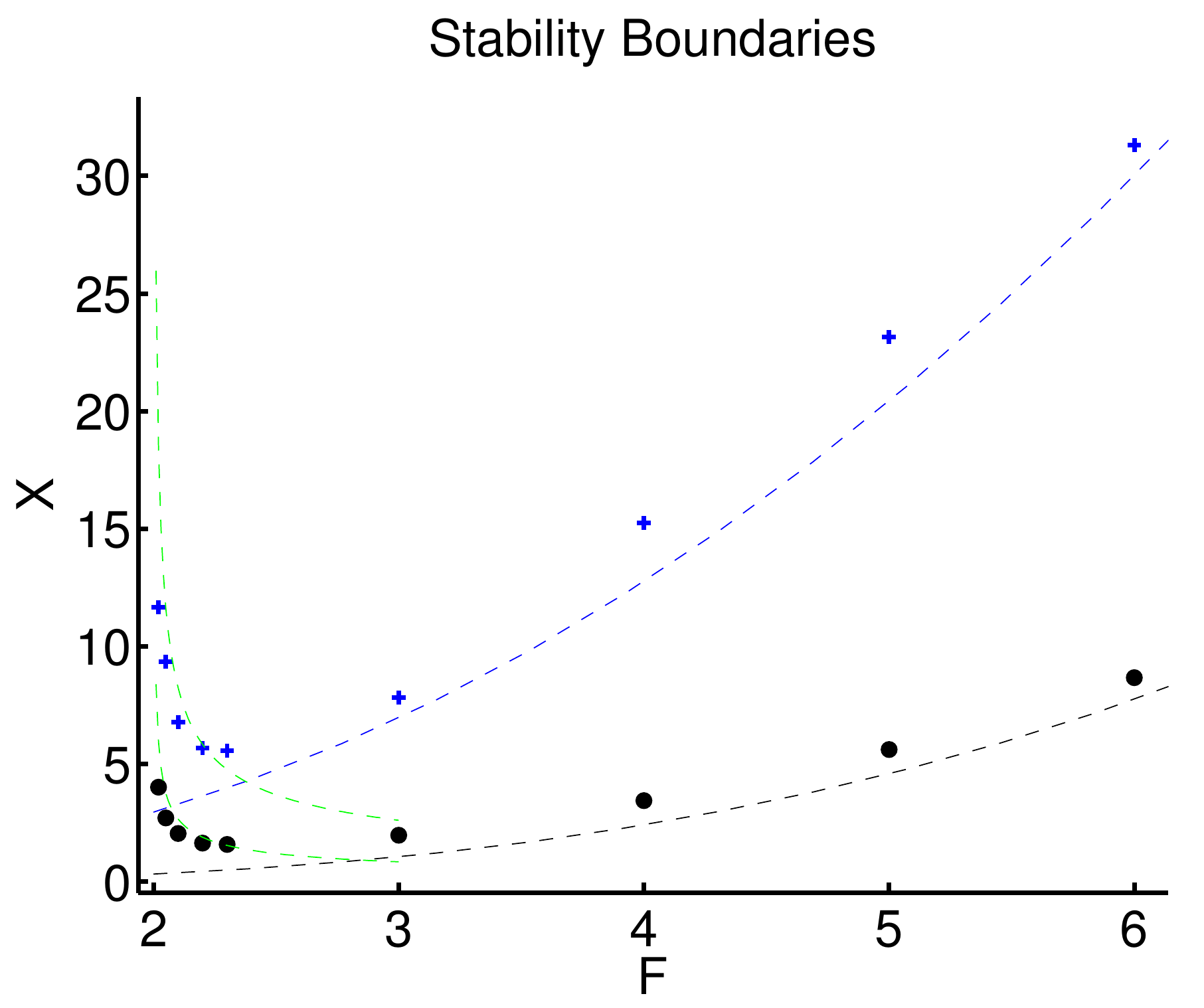}
\end{array}
$
\end{center}
\caption{
Lower and upper stability boundaries
for $\alpha=-2$, $\nu=0.1$, and, motivated by \eqref{scales}, scaling $q=0.4F^{-2}$.
Solid dots show numerically observed boundaries. 
Pale dashes 
indicate approximating curves 
given by (a) (upper) $X =  e^{0.087}F^{1.88}$ and (lower) $X = e^{-2.97}F^{2.83}$, (b) (upper) $\log(X) = 1.88\log(F)+0.087$ and (lower) $\log(X) = 2.83 \log(F)-2.97$.
Pale dotted curves (Green in color plates)
indicate theoretical boundaries as $F\to 2^+$.
(c) Small- to large-$F$ transition.
}
\label{fig476}\end{figure}

\begin{figure}[htbp]
 \begin{center}
$
\begin{array}{lcr}
(a) \includegraphics[scale=0.2]{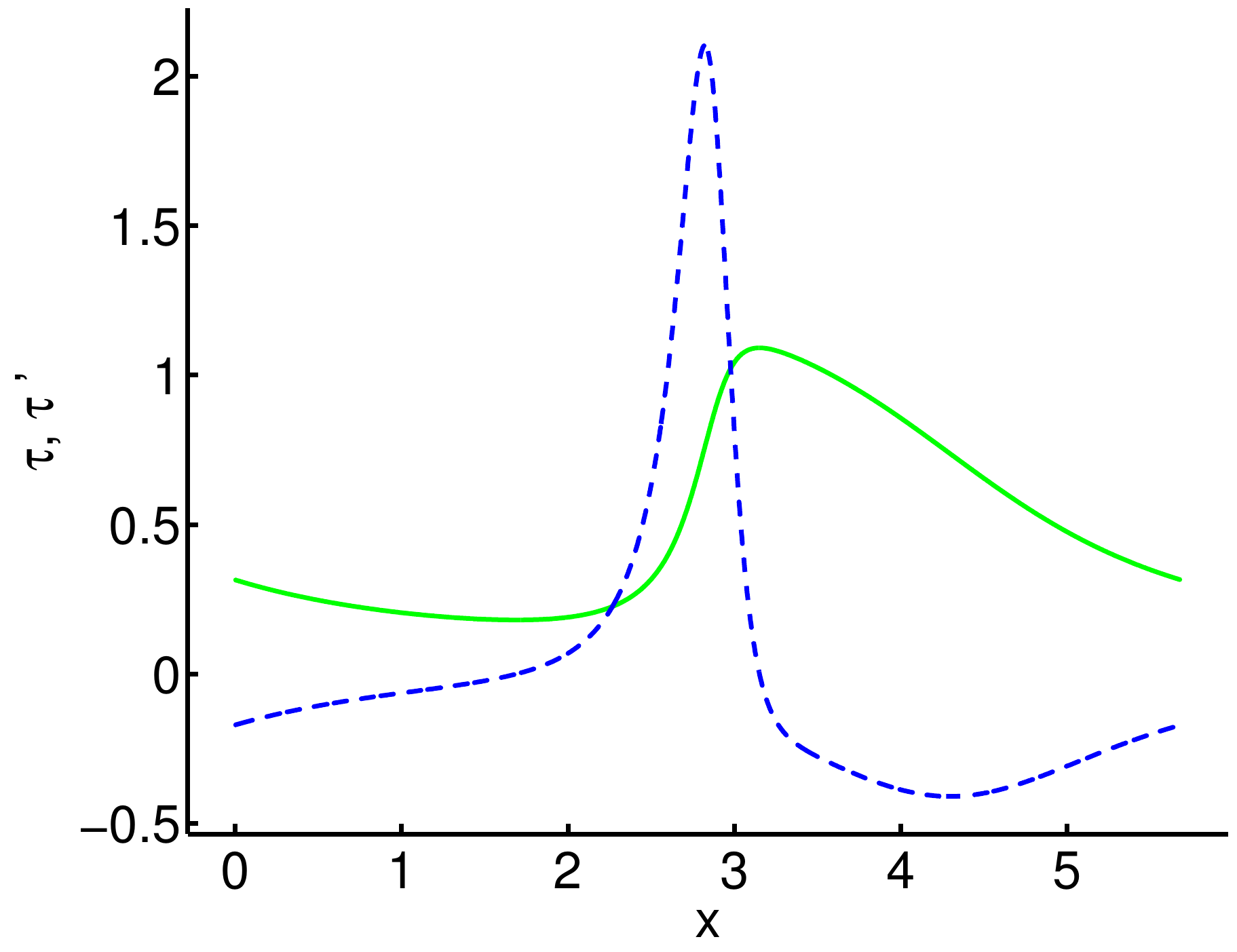} & (b)  \includegraphics[scale=0.2]{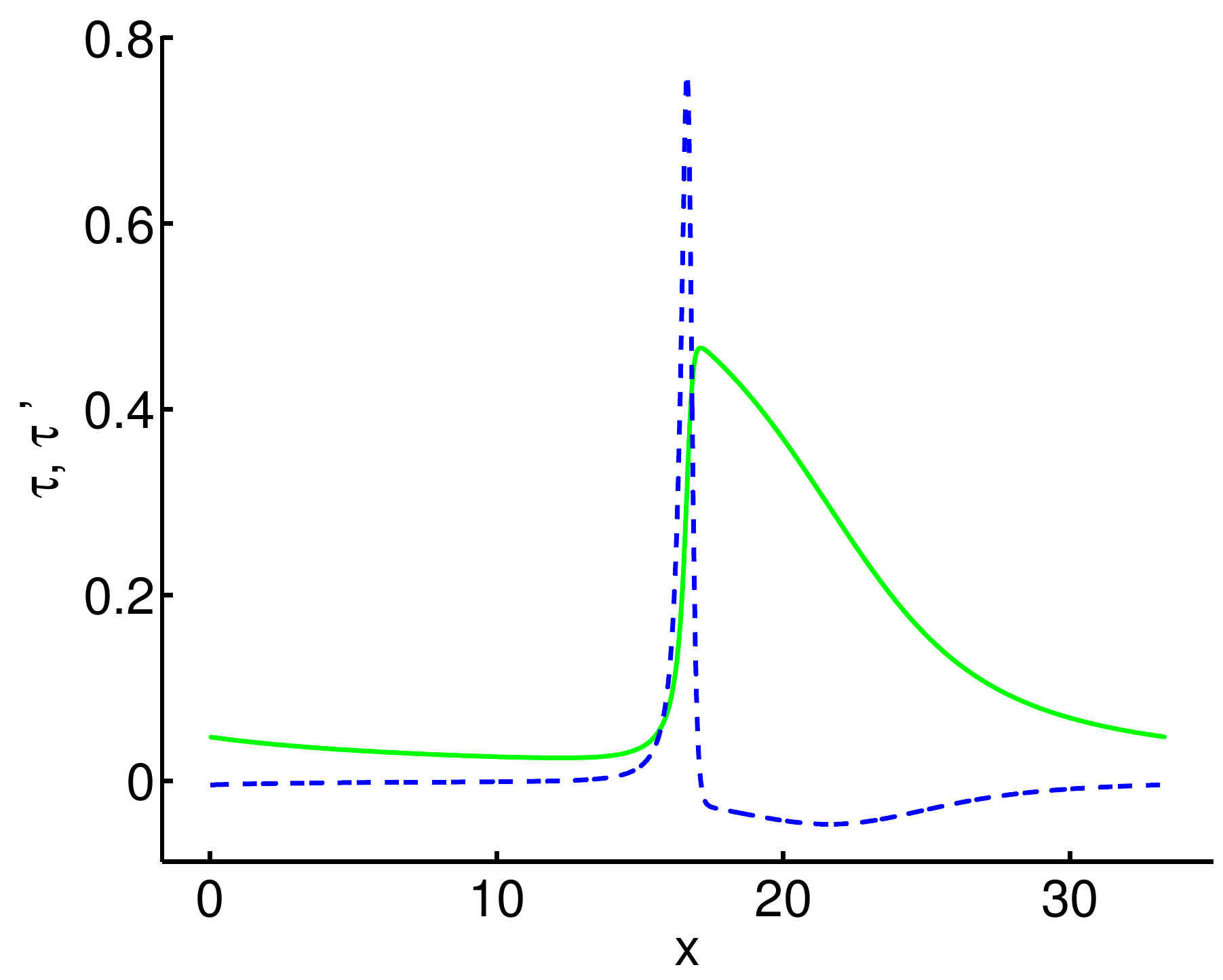} & (c) \includegraphics[scale=0.2]{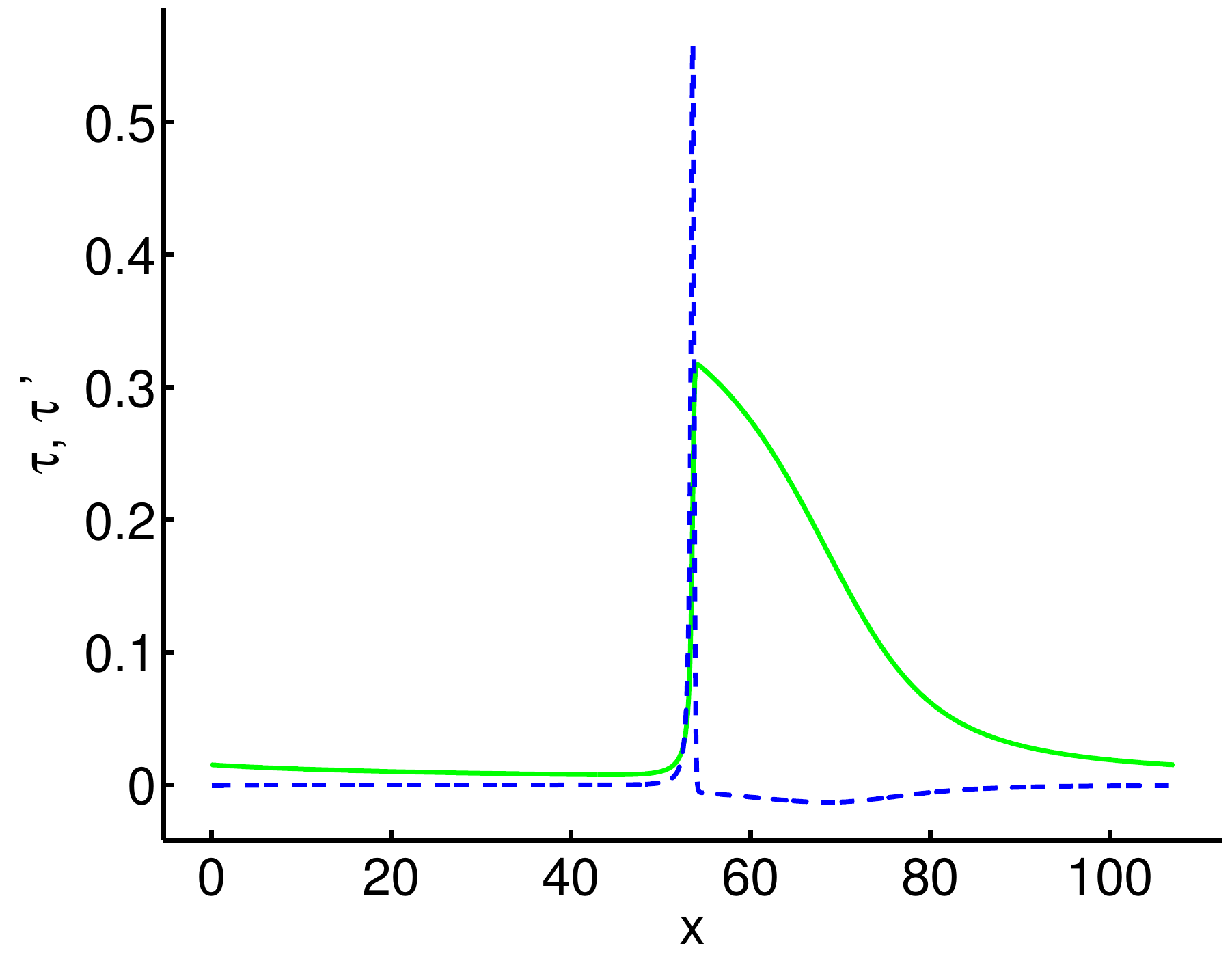}\\
(d) \includegraphics[scale=0.2]{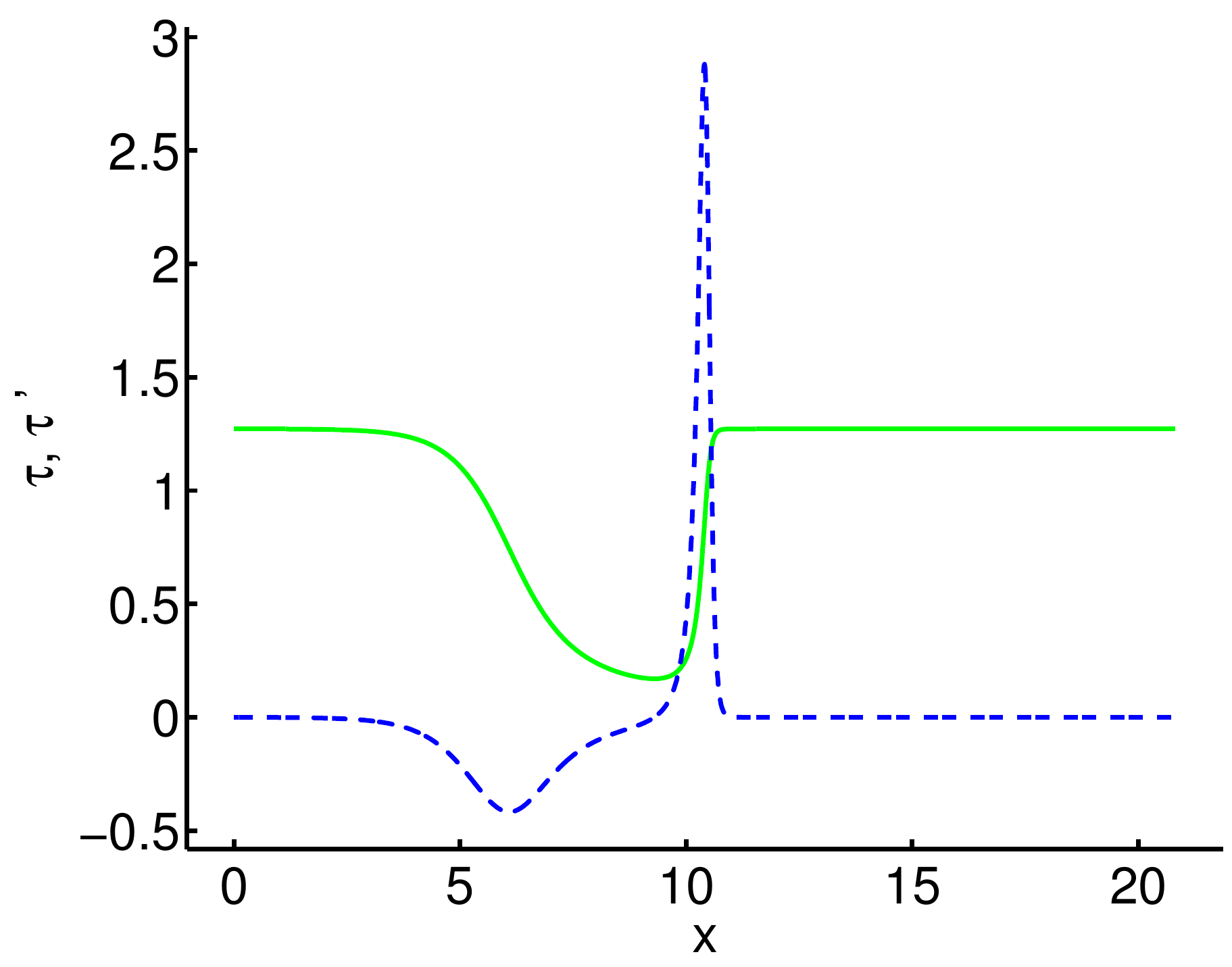} & (e) \includegraphics[scale=0.2]{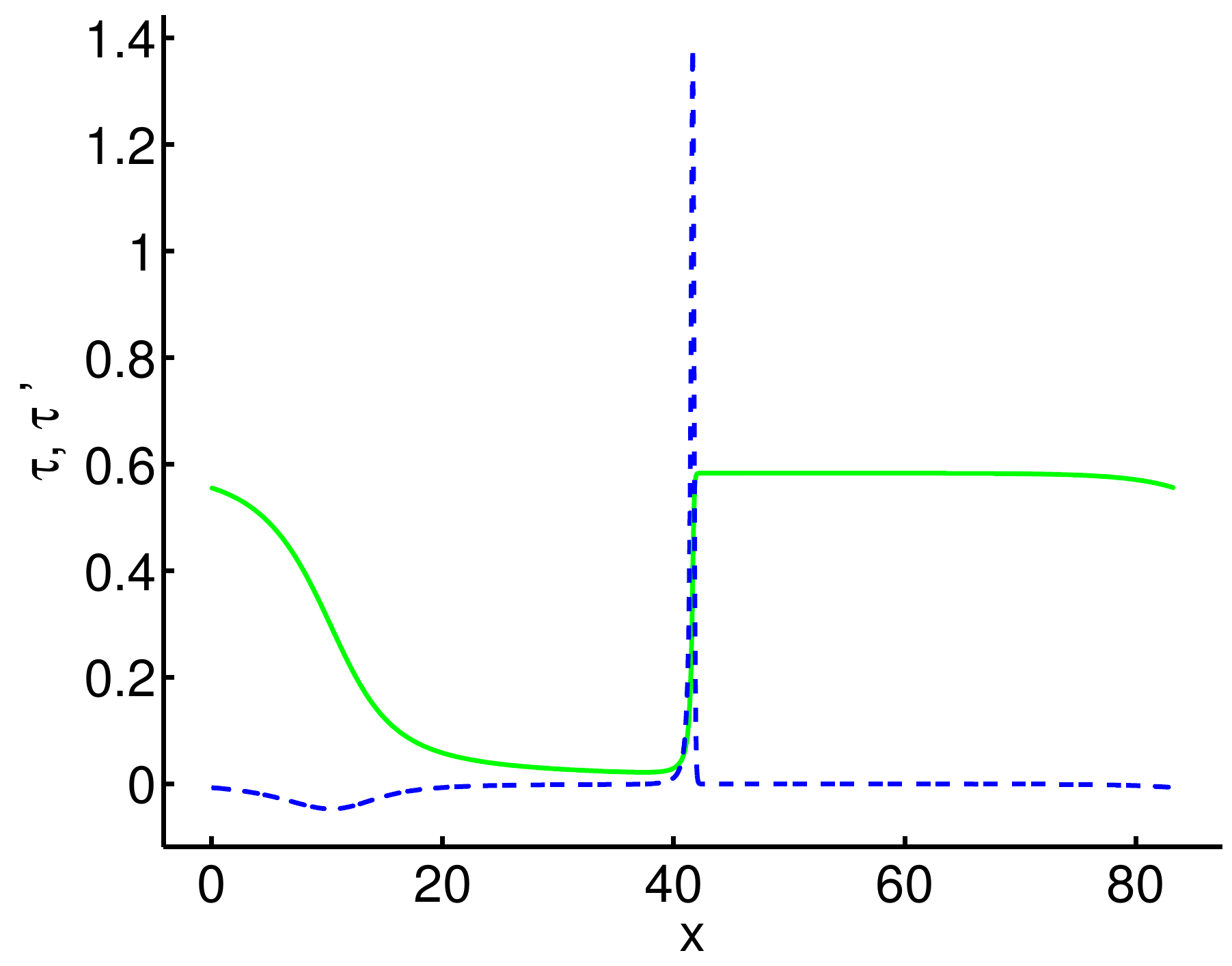} & (f) \includegraphics[scale=0.2]{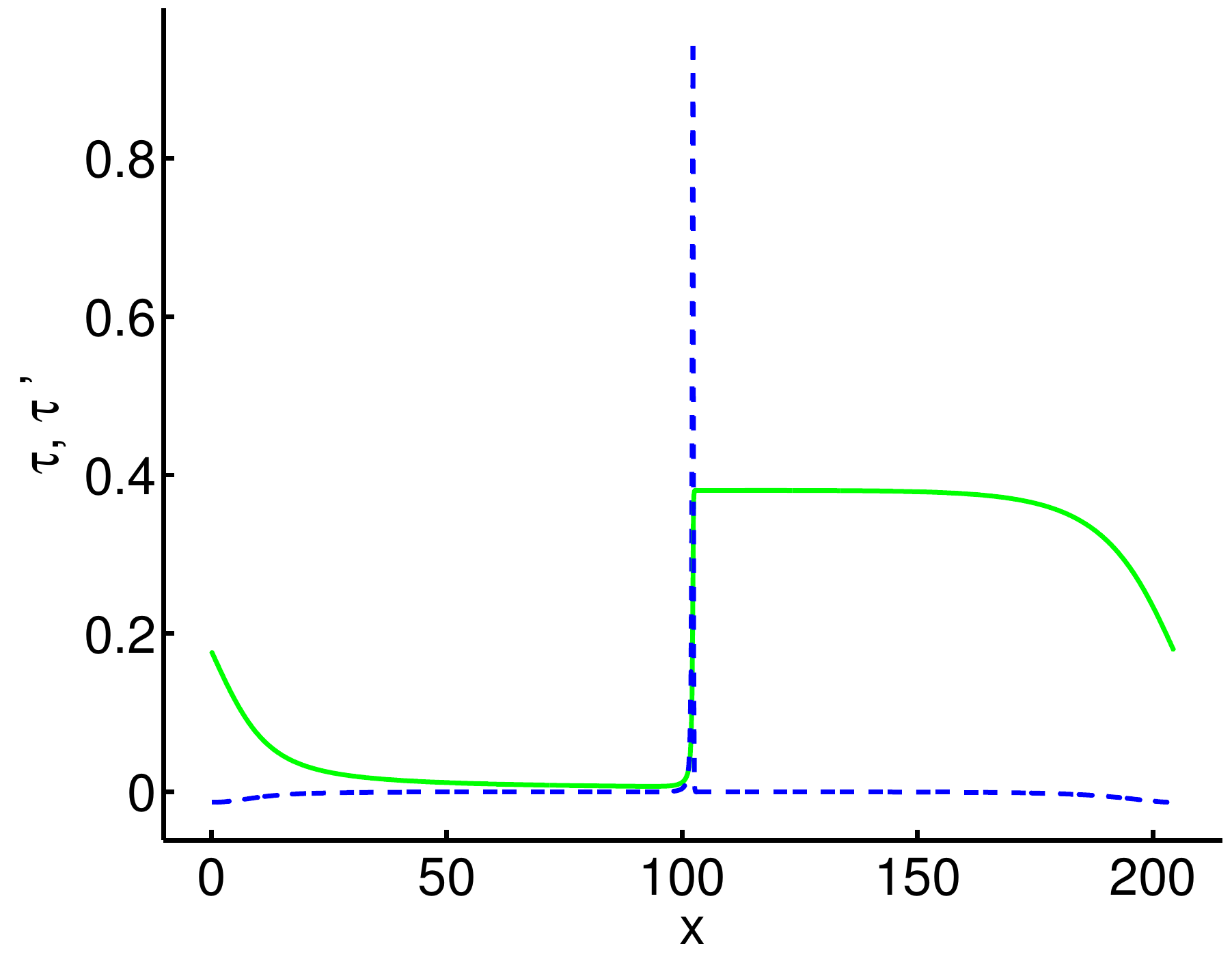}
\end{array}
$
\end{center}
\caption{Convergence to 
Dressler waves: We plot $\bar \tau(x)$ with pale solid curves 
and $\bar \tau'(x)$ with
dark dashed curves (green and blue respectively in color plates).
Here $\alpha = -2$, $\nu = 0.1$, $q=0.4F^{-2}$, 
and (a) $F = 5$, $X \approx 6.25$, (b) $F = 10$, $X \approx $ 33.3, (c) $F = 15$, $X \approx 107$, (d) $F = 5$, $X \approx 20.8 $, (e) $F = 10$, $X \approx $ 83.3, (f) $F = 15$, $X \approx $ 205.}
\label{fig39b}\end{figure}

\subsection{Discussion and open problems}\label{s:discussion}
There have been a number of numerical and analytical studies of viscous
roll waves in certain small-amplitude limits, in particular for the KdV-KS equations
governing formally the weakly unstable limit $F\to 2^+$ \cite{FST,CD,CDK,BN,EMR,PSU,BJNRZ2,JNRZ1,B}.
However, to our knowledge, the present study represents
the first systematic investigation of the
stability of {\it arbitrary amplitude} roll wave solutions of the 
viscous St. Venant equations for inclined thin film flow.

Our main mathematical contribution is the rigorous validation of the 
formal KdV-KS $\to$ KdV limit as a description of behavior in the small
Froude number/weakly unstable limit $F\to 2^+$.
This, together with the works 
%
\cite{EMR,BN,JNRZ1,B} on KdV-KS$\to$KdV, gives a complete classification of existence and stability of viscous St. Venant roll waves in the weakly unstable regime.
We note again that KS-KdV$\to$KdV
 is a canonical weakly unstable limit for the type of long-wave instabilities arising in thin film flow, in the same way that 
the (real and complex) Ginzburg-Landau equations are canonical models for 
finite-wavelength ``Turing-type'' instabilities.
However, its analysis, based on singular perturbations of periodic 
KdV solutions, is essentially different from that of
the finite-wavelength case
based on regular perturbation of constant solutions \cite{M1,M2,S2}.


From a practical point of view, the main point is perhaps 
the numerically-obtained description of behavior in  the passage from small-amplitude to large-amplitude behavior.
In particular, the universal scaling law of Numerical Observation 2 gives an 
unexpected global, simple-to-apply description of stability
that seems potentially of use in biological and engineering applications,
 for which the St. Venant equations appear to be the preferred ones in 
current use. 
(Compare with the very complicated behavior in Fig. \ref{f:islands} as $\delta$ is varied away from the small-$\delta$ limit.)
This adds new insight beyond the qualitative picture 
afforded by the canonical KdV-KS$\to$KdV limit.
In particular, our numerical results indicate a sharp transition at 
$F\approx 2.3$ from the quantitative predictions of the small-amplitude theory to the quite different large-$F$ prediction of Numerical Observation 2.
As hydraulic engineering applications typically involve values $2.5\leq F \leq 20$
\cite{Je,Br1,Br2,A,RG1,RG2,FSMA}, this distinction appears physically quite
relevant.

A very interesting open problem, both from the mathematical and engineering point of view, is to rigorously verify this numerically-observed rule of thumb.
As noted above 
(see Fig. \ref{fig476}), 
the upper and lower stability boundaries described in Numerical Observation 2 obey different scalings from those
prescribed in \eqref{scales}, 
as $F\to \infty$,
with period 
growing faster than
$X\sim F^{-1/2-5\alpha/4}$ by a factor $F^{1/2-c_1 + \alpha(c_2/2+5/4)}$
that is $\gg 1$ for $\alpha\leq \alpha_*\approx 1.54$:
in particular, for the two main physical values of interest $\alpha=-2$ and
$\alpha=0$, corresponding to constant (Eulerian) period and constant inflow,
respectively. 
Indeed, given the large values of $F$ to which the stability region 
extends, this may be deduced by Numerical Observation~1, 
which implies that all such waves of period $O(F^{-1/2-5\alpha/4})$ are 
necessarily unstable for $F\gg 1$.

An important consequence is that,
rescaling viscosity $\nu$ so that the resulting period $ X \nu$
after standard invariant scaling remains constant, we find that $\nu\to 0$. Hence, the limiting behavior of stable waves is described by the joint {\it inviscid, large-Froude number limit} $\nu\to 0$, $F\to +\infty$.
Figure \ref{fig39b}, depicting periodic profiles at the upper and lower stability boundaries for values $F=5,10,15$, clearly indicate convergence as $F$ increases to inviscid Dressler waves \cite{Dr}, alternating smooth portions
and shock discontinuities. This agrees with recent observations of \cite{BL}
that large-amplitude roll waves are experimentally well-predicted by
a simplified, asymptotic version of the inviscid theory.
We conjecture that our lower (low-frequency) stability boundary, corresponding to loss of hyperbolicity of associated Whitham equations, agrees with the inviscid threshold suggested by \cite[Theorem~1.2]{N},\footnote{
For fixed $4<F^2<90$, this states that waves are stable for 
fixed velocity/period and inclination angle $\theta$  sufficiently small: 
equivalently (by rescaling), for
fixed inclination angle and period $X$ sufficiently large.} while the upper (high-frequency) stability boundary, corresponding to appearance of unstable spectra far from the origin, arises through a 
homoclinic, or ``large-$X$,'' 
limit similar to that studied 
in \cite{G2,SS} for reaction diffusion, KdV and related equations.

We note that both 
analysis and numerics are complicated in the large-$X$ limit by the appearance, differently from the case treated in 
\cite{SS}
of essential spectra through the origin of the limiting solitary wave 
profile at $X=+\infty$, along with the usual zero eigenvalue
imposed by translational invariance.
Whereas point spectra of a solitary wave are approximated as $X\to +\infty$ by  
individual loops of Floquet spectra, curves of continuous spectra are 
``tiled'' by arcs of length $\sim X^{-1}$,
leading to the plethora of zero-eigenvalues (marked as pale dots, red in color plates)
visible in Fig. \ref{showspec}(c)--(d).
The large number of roots as $X\to \infty$ leads to numerical difficulty for both Hill's method and numerical Evans function techniques,
making the resolution of the stability region an extremely delicate computation,
requiring 40 days on IU's 370-node Quarry supercomputer cluster to complete (Appendix \ref{s:comp}).
The asymptotic analysis of this region is thus of considerable practical as well as theoretical interest.

Another very interesting open question in the large-$F$ regime
is nonlinear stability of spectrally stable wave, in the absence of slope condition \eqref{e:Eslope}, or $2\nu
\bar u_x < F^{-2}$, where $\bar u$ denotes the background profile.
This condition was used in two ways in the nonlinear stability analysis of 
\cite{JZN}: first, to carry out high-frequency resolvent estimates used to obtain linearized stability estimates, and second, to carry out ``nonlinear damping'' type estimates used to close the nonlinear iteration 
yielding nonlinear modulational stability.
In \cite{JZN}, both were carried out by closely-related energy estimates.
However, in the linear case, applying more delicate linear ODE techniques like those of Section \ref{s:est}, one
finds that the needed high-frequency resolvent bounds require
not that \eqref{e:Lslope} hold pointwise but only on average. As $\bar u$ is periodic, the resulting averaged condition is $0<F^{-2}$, 
hence always satisfied. Thus, {\it condition \eqref{e:Lslope} 
seems 
superfluous for linearized stability.}
This strongly suggests that \eqref{e:Lslope} may be dropped in the nonlinear analysis as well, either by a pseudodifferential analysis paralleling that of the linear case, or by an energy estimate with strategically-chosen (periodic) exponential weight analogously as in \cite{Z2} for viscous shocks.

Finally, recall that, just as the KdV-KS$\to$KdV limit is derived formally  from the viscous St. Venant equations in the weakly unstable limit, the viscous St. Venant equations are derived formally from the more fundamental free-boundary Navier--Stokes equations in the shallow water limit. Alternatively, the KdV-KS$\to$KdV limit may be derived directly from the Navier--Stokes equations in a formal weakly unstable/shallow water limit. Rigorous verification of this formal limit, directly from the free-boundary Navier--Stokes equations, is perhaps the fundamental open problem in the theory.

\subsection{Plan of the paper}
In Section \ref{s:existence}, we recall the formal derivation of KdV-KS 
from the viscous St. Venant system and establish Theorem \ref{maine} 
concerning the existence of small amplitude roll-waves.  
These calculations will serve as a guideline for the subsequent analysis: 
indeed, we will follow the general strategy for  the proof of spectral stability for KdV-KS periodic waves presented in \cite{JNRZ1}. 
In Section~\ref{s:est}, we begin studying the stability of these small amplitude roll-waves by computing a priori estimates on possible unstable eigenvalues for their
associated linearized (Bloch) operators: energy estimates provide natural $O(1)$ bounds (as $\delta=\sqrt{F-2}\to 0$) whereas an approximate diagonalization process is needed to obtain the sharper bound $O(\delta^{3})$. 
At this stage, we recover, after a suitable rescaling and up to some negligible terms, the spectral problem associated to KdV-KS obtained after the Fenichel's transformations. 
In Section \ref{evans_expansion}, we then follow the proof in \cite{JNRZ1} to complete the spectral stability analysis: for any fixed \emph{non-zero} Bloch number, possible unstable eigenvalues $\lambda(\xi)$ for the linearized St. Venant system as $\delta\to 0^+$ are expanded as 
$\lambda(\delta;\xi,\lambda_0)=\delta^3\lambda_0+\delta^4 \lambda_1(\xi,\lambda_0)+O(\delta^5)$, where $\lambda_0\in i\mathbb{R}$ is an explicit eigenvalue associated with the linearized (Bloch) operator for the KdV equation and the corrector $\lambda_1(\xi,\lambda_0)$ is {\it exactly} the corrector found in the analogous study of the stability of KdV-KS wavetrains in the singular limit $\delta\to 0^+$: see \cite{B,JNRZ1} and Section~\ref{s:limit} above. In particular, there it was proven (through numerical evaluation of integrals of
certain elliptic functions) that $\textrm{Ind}(k)<0$, as defined in \eqref{index} for all $k\in\mathcal{P}$ corresponding to periods
$X=X(k)$ in an open interval $(X_m,X_M)$ with $X_m\approx 8.44$ and $X_M\approx 26.1$.
On the other hand, in the regime 
$0<|\lambda|/\delta^3+|\xi|\ll 1$ 
%
a further expansion of the Evans function is needed. There, we show that 
modulo a rescaling of $\lambda$ by $\delta^3$
this expansion is exactly the one derived in \cite{JNRZ1} 
for the singular KdV limit of the KdV-KS equation.  From the results of \cite{JNRZ1}, this
concludes the proof of our description of spectral stability in the small-Froude number limit $F\to 2^+$. 
Finally, in Section~\ref{s:numinf},
we carry out a numerical analysis similar to the one in \cite{BJNRZ2} where for the KdV-KS equation the full set of model parameters was explored: here we consider the influence of $2<F<\infty$ on the range of stability of periodic waves,
as parametrized by period $X$ and discharge rate $q$.


\medskip

\noindent
{\bf Authors Note:} Since the completion of our analysis, it has been shown that the technical slope condition \eqref{e:Eslope} may indeed be dropped
as an assumption to establish the nonlinear modulational stability of diffusively stable roll-waves; see \cite{RZ}.

\section{Existence and Stability of Roll-Waves in the Limit $F\to 2^+$}\label{s:F-->2}

In this section, we rigorously analyze in the weakly unstable limit $F\to 2^+$ the spectral stability of periodic traveling wave solutions
of the St. Venant equations \eqref{swl} to small localized (i.e. integrable)
perturbations.  
We begin by studying the existence of such solutions and determining their asymptotic expansions. 
In particular, we show that such waves exist and, up to leading order,
are described by solutions of the KdV-KS equation \eqref{kdv-ks} in the singular limit $\delta\to 0$ (Theorem \ref{maine}).

\subsection{Existence of small-amplitude roll-waves: proof of Thm. \ref{maine}}\label{s:existence}

The goal of this section is to establish the result of Theorem \ref{maine}.  To begin, notice that 
traveling wave solutions of the shallow-water equations \eqref{swl} with wave speed $c$ are stationary solutions 
of the system 
\begin{equation}\label{swl:trav}
\partial_t\tau-\partial_x(u+c\tau)=0,\quad\partial_t u+\partial_x\left(\frac{\tau^{-2}}{2F^2}-c u\right)=1-\tau u^2+\nu\partial_x\left(\tau^{-2}\partial_xu\right)
\end{equation}
of PDE's.  In particular, from the first PDE it follows that $u=q-c\tau$ for some constant of integration $q\in\RM$ and hence $\tau$ must satisfy the profile ODE
\begin{equation}\label{profile}
\partial_x\left(\frac{\tau^{-2}}{2F^2}+c^2\tau\right)=1-\tau (q-c\tau)^2-c\nu\partial_x\left(\tau^{-2}\partial_x\tau\right).
\end{equation}
Clearly then, we have that $(\tau,u)=(\tau_0,\tau_0^{-1/2})$ is an equilibrium solution of \eqref{swl:trav} for any $\tau_0>0$.  Furthermore, linearizing
the profile ODE \eqref{profile} about $\tau=\tau_0$ yields, after rearranging, the ODE
\[
c\nu\tau_0^{-2}\tau''+\left(c^2-c_s^2\right)\tau'+\left(\frac{\tau_0^{-3/2}/2-c}{\tau_0^{-1/2}/2}\right)\tau=0,\quad c_s=\frac{\tau_0^{-3/2}}{F}\,.
\]
Considering the eigenvalues of the above linearized equation as being indexed by the parameters $u_0$, $c$, and $q$ it is straightforward to check
that a Hopf bifurcation occurs when
\[
c=c_s=\frac{\tau_0^{-3/2}}{F}\quad\textrm{and}\quad F>2.
\]
This verifies that as the Froude number $F$ crosses through $F=2$ the equilibrium solutions $(\tau,u)=(\tau_0,\tau_0^{-1/2})$, corresponding to a parallel flow, becomes linearly unstable through a Hopf bifurcation, and hence nontrivial periodic traveling wave solutions of \eqref{swl} exist for $F>2$. Moreover, at the bifurcation point the limiting period of such waves is given by $X=\frac{2\pi}{\omega}$ where $\omega=\tau_0^{5/4}\nu^{-1/2}\sqrt{F-2}$.

With the above preparation in mind, we want to examine the small-amplitude periodic profiles generated in the weakly-nonlinear limit $F\to 2^+$.  To this end, we set $\delta=\sqrt{F-2}$ and notice that by rescaling space and time in the KdV-like fashion 
$Y=\delta (x-c_0\,t)/\nu^{1/2}$ and $S=\delta^3 t/\nu^{1/2}$, with $c_0=\tau_0^{-3/2}/2$, 
\eqref{swl} becomes
\begin{equation}\label{swl:trav-rescale}
\delta^2\partial_S\tau-\partial_Y(u+c_0\tau)=0,\quad
   \delta^3\partial_S u+\delta\partial_Y\left(\frac{\tau^{-2}}{2F^2}-c_0 u\right)=\nu^{1/2}\left(1-\tau u^2\right)+\nu^{1/2}\delta^2\partial_Y\left(\tau^{-2}\partial_Y u\right).
\end{equation}
We now search for small-amplitude solutions of this system of the form
$
(\tau,u)=(\tau_0,\tau_0^{-1/2})+\delta^2(\tilde\tau,\tilde u)
$
with wave speed $c_0$ in the limit $F\to 2^+$.  The unknowns $\tilde\tau$ and $\tilde{u}$ satisfy the system
\begin{align*}
&\delta^2\partial_S\tilde\tau-\partial_Y(\tilde{u}+c_0\tilde{\tau})=0\\
&\delta^3\partial_S\tilde{u}+\delta\partial_Y\left(\frac{(\tau_0+\delta^2\tilde\tau)^{-2}}{2\delta^2F^2}-c_0\tilde{u}\right)\\
&\qquad=\nu^{1/2}\delta^{-2}\left(1-(\tau_0+\delta^2\tilde\tau)
   (\tau_0^{-1/2}+\delta^2\tilde{u})^2\right)+\nu^{1/2}\delta^2\partial_Y\left((\tau_0+\delta^2\tilde\tau)^{-2}
\partial_Y\tilde{u}\right).
\end{align*}
Defining
the new unknown $\tilde{w}=\delta^{-2}\left(\tilde u+c_0\tilde\tau\right)$ and inserting $\tilde{u}=-c_0\tilde\tau+\delta^2\tilde{w}$
above yields
\begin{align*}
&\partial_S\tilde\tau - \partial_Y\tilde{w}=0\\
&\delta^3\partial_S\tilde{w}+\delta^{-1}\partial_Y\left(\frac{(\tau_0+\delta^2\tilde\tau)^{-2}-\tau_0^2+2\tau_0^{-3}\delta^2\tilde\tau-3\tau_0^4\delta^4\tilde\tau^2}{2\delta^2F^2}+\left(c_0^2-\frac{\tau_0^{-3}}{F^2}\right)\tilde\tau+\frac{3\tau_0^4}{2F^2}\delta^2\tilde\tau^2-2c_0\delta^2\tilde{w}\right)\\
&\qquad=\nu^{1/2}\left((2\tau_0^{-1/2}c_0-c_0^2\tau_0)\tilde\tau^2-2\tau_0^{1/2}\tilde{w}-
c_0
\tau_0^{-2}
\partial_{YY}\tilde\tau+\delta^2\tilde g(\tilde\tau,\tilde w,\delta)\right)\\
&\qquad\qquad
+\nu^{1/2}\tau_0^{-2}\delta^2\partial_{YY}\tilde w
+\nu^{1/2}\delta^2\partial_Y\left(\tilde r(\tilde\tau,\delta)\partial_Y(\delta^2\tilde w-c_0\tilde\tau)\right)
\end{align*}
for some smooth functions $\tilde g, \tilde r$.  
Expanding $F^{-2} = \frac{1}{4}\left(1-\delta^2\right)+O(\delta^4)$ 
reduces the above system to
\begin{align*}
&\partial_S\tilde\tau-\partial_Y\tilde w=0\\
&\delta^3\partial_S\tilde w + \delta\partial_Y\left(\frac{\tilde\tau}{4\tau_0^3}+\frac{3}{8\tau_0^4}\tilde\tau^2-\tau_0^{-3/2}\tilde w+\delta^2 \tilde f(\tilde\tau,\delta)\right)\\
&\qquad=\nu^{1/2}\left(\frac{3}{4\tau_0^2}\tilde\tau^2-2\tau_0^{1/2}\tilde{w}-\frac{1}{2\tau_0^{7/2}}\partial_{YY}\tilde\tau+\delta^2 \tilde g(\tilde\tau,\tilde w,\delta)\right)\\
&\qquad\qquad
+\nu^{1/2}\tau_0^{-2}\delta^2\partial_{YY}\tilde w
+\nu^{1/2}\delta^2\partial_Y\left(\tilde r(\tilde\tau,\delta)\partial_Y(\delta^2\tilde{w}-c_0\tilde\tau)\right)
\end{align*}
for some smooth function $\tilde{f}$. Rescaling the independent and dependent variables via
$$
(Y,S,\tilde\tau,\tilde{w})\mapsto\left(\tau_0^{5/4}Y,\ \tfrac14\tau_0^{-1/4}S,\ 3\tau_0^{-1}\tilde\tau,\ 12\tau_0^{1/2}\tilde{w}\right),
$$
we arrive at the rescaled system
\begin{equation}\label{e:w_reduced}
\begin{aligned}
&\partial_S\tilde\tau-\partial_Y\tilde w=0\\
&\frac{\delta^3}{8\tau_0^{1/4}\nu^{1/2}}\partial_S\tilde w + \frac{\delta}{2\tau_0^{1/4}\nu^{1/2}}\partial_Y\left(\tilde\tau+\frac{1}{2}\tilde\tau^2-\tilde w+\delta^2 f(\tilde\tau,\delta)\right)\\
&\qquad=\frac{1}{2}\tilde\tau^2-\tilde{w}-\partial_{YY}\tilde\tau+\delta^2 g(\tilde\tau,\tilde w,\delta)
+\frac12\delta^2\partial_{YY}\tilde w
+\delta^2\partial_Y\left(r(\tilde\tau,\delta)\partial_Y(\delta^2\tilde{w}-c_0\tilde\tau)\right)
\end{aligned}
\end{equation}
for some smooth functions $f$, $g$, and $r$.

We now search for periodic traveling waves of the form $(\tilde\tau,\tilde w)(Y-\tilde cS)$ in the rescaled system \eqref{e:w_reduced}.  
Changing to the moving coordinate frame $(Y-\tilde cS,S)$, in which the $S$-derivative becomes zero, and integrating the first equation with respect to the 
new spatial variable $Y-\tilde cS$, we find that
$\tilde w=\tilde q-\tilde c\tilde\tau$ for some constant $\tilde q$. Substituting this identity into the second equation in \eqref{e:w_reduced}, also 
expressed
in the moving coordinate frame $(Y-\tilde cS,S)$, gives
\begin{align*}
&\frac{\delta^3}{8\tau_0^{1/4}\nu^{1/2}}\,\tilde c^2 
\tilde\tau'
+\frac{\delta}{2\tau_0^{1/4}\nu^{1/2}}\left((1+\tilde c)\tilde\tau+\frac{1}{2}\tilde\tau^2+\delta^2f(\tilde\tau,\delta)\right)'\\
&\qquad=-\tilde q+\frac{1}{2}\tilde\tau^2+\tilde c\tilde\tau+\delta^2G(\tilde\tau,\delta)-\left((1+\delta^2B(\tilde\tau,\delta))\tilde\tau'\right)'
\end{align*}
for some smooth functions $G$ and $B$.  Next, introducing the near-identity change of dependent variables 
\begin{equation}\label{cov}
\tilde T=-\left(\tilde\tau+\delta^2\int_0^{\tilde\tau}B(x,\delta)dx\right)
\end{equation}
gives, finally, the reduced, nondimensionalized profile equation
\begin{equation}\label{eq:red:f}
\tilde{T}''+\frac{1}{2}\tilde T^2-\tilde c\tilde T-\tilde q=-\tilde\delta\left((\tilde c+1)\tilde T-\frac{1}{2}\tilde T^2\right)'+\tilde\delta^2m(\tilde T,\tilde T',\tilde\delta),
\quad\tilde\delta:=\frac{\delta}{2\tau_0^{1/4}\nu^{1/2}}
\end{equation}
for some smooth function $m$.  

It is well known
(see \cite{BD} for instance)
that the limiting $\tilde{\delta}=0$ profile equation
\begin{equation}\label{kdvprof}
T_0''+\frac{1}{2}T_0^2-\tilde c\,T_0=\tilde q
\end{equation}
selects for a given $(\tilde c,\tilde q)$, up to translation, a one-parameter subfamily of the cnoidal waves of the KdV equation
$
u_t+uu_x+u_{xxx}=0,
$
which are given given given explicitly as a three-parameter family by 
\[
T_0(x;a_0,k,\kappa)\ =\ a_0+12k^2\kappa^2\cn^2\left(\kappa x,k\right),
\]
where $\cn(\cdot,k)$ is the Jacobi elliptic cosine function with elliptic modulus 
$k\in(0,1)$, 
$\kappa>0$ is a scaling parameter, and $a_0$ is an arbitrary real constant related to the Galilean invariance of the KdV equation,
with the parameters $(a_0,k,\kappa)$ being constrained by $(\tilde c,\tilde q)$ through the relations
\begin{align*}
\tilde c&\ =\ a_0+4\kappa^2\,(2k^2-1)\,,\\
\tilde q&\ =\ 24k^2(1-k^2)\kappa^4-a_0\,(\tfrac12a_0+4\kappa^2\,(2k^2-1))\,.
\end{align*}
Note that these cnoidal profiles are $2K(k)/\kappa$ periodic, where $K(k)$ is the complete elliptic integral of the first kind.  

Now, noting that \eqref{eq:red:f} can be written as
$$
\frac{1}{\tilde\delta}\left(\frac12(\tilde{T}')^2+\frac{1}{6}\tilde T^3-\frac{\tilde c}{2}\tilde T^2-\tilde q\tilde T\right)'=
\tilde T'\,\left(-\left(\tilde c+1)\tilde T-\frac{1}{2}\tilde T^2\right)'+\tilde\delta\,m(\tilde T,\tilde T',\tilde\delta)\right),
$$
standard arguments in the study of \emph{regular perturbations of planar Hamiltonian systems} (see, e.g., \cite[Chapter~4]{GH}) 
imply that, among the above-mentioned one-dimensional family of KdV cnoidal waves $T_0$, only those satisfying
\be\label{e:svselection}
\int_0^{2K(k)/\kappa}T_0'\left((\tilde c+1)T_0-\frac{1}{2}T_0^2\right)'dx=0
\ee
can continue for $0<\tilde\delta\ll 1$ into a family of periodic solutions of \eqref{eq:red:f} and that, further, 
simple zeros of \eqref{e:svselection} do indeed 
continue for small $\tilde\delta$ into a unique, up to translations, three-parameter family of periodic solutions of \eqref{eq:red:f}; that is, for
each fixed $0<\tilde\delta\ll 1$ we find, up to translations, a two-parameter family of periodic solutions of \eqref{eq:red:f} that may be parametrized
by $a_0$ and $k$.
%
The observation that, in the present case, the selection principle \eqref{e:svselection} indeed determines a unique wave
that is a simple zero, follows directly from the proof of 
Proposition~\ref{p:kdvsolnexpand} (see Remark~\ref{r:selection}) since equation \eqref{kdvprof} implies
$$
\int_0^{2K(k)/\kappa}T_0'\left((\tilde c+1)T_0-\frac{1}{2}T_0^2\right)'dx
\ =\ \int_0^{2K(k)/\kappa}T_0'\left(T_0+T_0''\right)'dx\ =\ -\int_0^{2K(k)/\kappa}T_0\left(T_0''+T_0''''\right)dx\,,
$$
in agreement with the KdV-KS case.
This shows that the profile expansion agrees to order $O(\tilde \delta)$ with
the KdV-KS expansion.
Indeed, further computations show that the expansion of the profile coincides 
with the KdV-KS expansion up to 
order $O(\tilde \delta^2)$.
To complete the proof of Theorem~\ref{maine} we need only observe that, instead of fixing the speed as above and letting the period vary, one may 
alternatively fix the period and vary the velocity.

\br\label{btrmk}
Viewed from a standard dynamical systems point of view, the $F\to 2^+$
limit may be recognized as a {\it Bogdanev--Takens}, or
saddle-node bifurcation; see, e.g., the corresponding 
bifurcation analysis carried out for an artificial viscosity version
of Saint Venant in \cite{HC}.  
The unfolding of a Bogdanev--Takens point proceeds, similarly as above,
by rescaling/reduction to a perturbed Hamiltonian system \cite[Section~7.3]{GH}. 
\er

\subsection{Estimate on possible unstable eigenvalues}\label{s:est}

Next, we turn to analyzing the spectral stability (to localized perturbations)\footnote{The strongest kind of spectral stability, in the sense that it implies spectral stability to co-periodic perturbations, subharmonic perturbations, side-band perturbations, etc.}
of the asymptotic profiles of the St. Venant equation constructed in Theorem~\ref{maine}.  
To begin, let $(\bar\tau_\delta,\bar u_\delta)(x-\bar c_\delta t)$ denote a periodic traveling wave solution of the viscous St. Venant equation \eqref{swl}, as given by Theorem~\ref{maine} for $\delta=\sqrt{F-2}\in(0,\delta_0)$. More explicitly, in terms of the expressions given 
in
Theorem~\ref{maine}, the periodic profiles are 
\be\label{newscale1}
\bar\tau_\delta(\theta)\ =\ \tau_0\,+\,\delta^2\tfrac{\tau_0}{3}\tilde\tau_\delta\left(\frac{\tau_0^{5/4}\delta}{\nu^{1/2}}\,\theta\right)\,,\qquad
\bar u_\delta(\theta)\ =\ u_0\,+\,\delta^2\tfrac{u_0}{6}\tilde u_\delta\left(\frac{\tau_0^{5/4}\delta}{\nu^{1/2}}\,\theta\right)
\ee
and the period $X_\delta$, wave speed $\bar c_\delta$, and constant of integration $q_\delta\equiv\bar u_\delta+\bar c_\delta\bar\tau_\delta$ 
are expressible via
\be\label{newscale2}
X_\delta\ =\ \frac{\nu^{1/2}}{\tau_0^{5/4}\delta}X\,,\qquad 
\bar c_\delta\ =\ c_0+\frac{\delta^2}{4\tau_0^{3/2}}\tilde c_\delta\,,\qquad 
q_\delta\ =\ u_0+\bar c_\delta \tau_0+\frac{\delta^2}{12\tau_0^{1/2}}\tilde q\,,
\ee
with $\tau_0, u_0$ constant, $c_0=\tau_0^{-3/2}/F$, and $\theta=x-\bar c_\delta t$.
Linearizing \eqref{swl} about $(\bar\tau_\delta,\bar u_\delta)$ in the co-moving coordinate
frame\footnote{Henceforth, we suppress the dependence of $\bar\tau$, $\bar u$ $q$, and $\bar c$ on $\delta$.} 
$(x-\bar c t,t)$ leads to the linear evolution system
\begin{equation}\label{swl:lin}
\displaystyle
\partial_t\tau-\partial_x(u+\bar c\tau)=0,\quad \partial_t u-\partial_x\left(\bar c\,u+\left(\frac{\bar\tau^{-3}}{F^2}-2\bar\tau^{-3}\bar u'\right)\tau\right)=-\bar u^2\tau-2\bar u\bar\tau+\nu\partial_x(\bar\tau^{-2}\partial_x u)
\end{equation}
governing the perturbation $(\tau,u)$ of $(\bar{\tau},\bar{u})$. 
Seeking time-exponentially dependent modes leads to the spectral problem
\begin{equation}\label{spec1}
\begin{array}{ll}
\displaystyle
(u+\bar c\tau)'=\lambda\,\tau,\\
\displaystyle
\nu(\bar\tau^{-2}u')'=(\lambda+2\bar u\bar\tau) u-\left(\left(\frac{\bar\tau^{-3}}{F^2}-2\bar\tau^{-3}\bar u'\right)\tau'+\bar c u'\right)+\left(\bar u^2-\left(\frac{\bar\tau^{-3}}{F^2}-2\bar\tau^{-3}\bar u'\right)'\right)\tau,
\end{array}
\end{equation}
where primes denote differentiation with respect to $x$.  
In particular, notice that \eqref{spec1} is an ODE spectral problem with $X_\delta$-periodic coefficients.  As described in Section \ref{s:previous} above, Floquet theory implies that the $L^2(\RM)$ spectrum associated with 
\eqref{swl:lin}
is comprised entirely of essential spectrum and can be smoothly parametrized by the discrete eigenvalues of the spectral problem \eqref{spec1} considered with the quasi-periodic boundary conditions $(\tau,u)(x+X_\delta)=e^{i\xi}(\tau,u)(x)$ for some value of the Bloch parameter
$\xi\in[-\pi/X_\delta,\pi/X_\delta)$. The underlying periodic solution $(\bar\tau,\bar u)$ is said to be 
(diffusively) spectrally stable provided conditions (D1)-(D3) introduced in the introduction hold. 
Reciprocally, 
the solution will be spectrally unstable if there exists a $\xi\in[-\pi/X_\delta,\pi/X_\delta)$ such that the associated Bloch
operator has an eigenvalue in the open right half plane.

In this section, we provide a priori estimates on the possible unstable Bloch eigenvalues of the above eigenvalue problem \eqref{spec1}.  As a first step, we
carefully examine the hyperbolic-parabolic structure of the eigenvalue problem and demonstrate that, as $F\to 2^+$ or, equivalently, as $\delta\to 0^+$, the unstable Bloch eigenvalues of this system are $O(1)$.  Next, we prove a simple consistent splitting result that establishes all unstable Bloch eigenvalues of \eqref{spec1} converge to zero as $\delta\to 0^+$.  We can then bootstrap these estimates to perform a more refined analysis of the eigenvalue problem 
demonstrating
that such unstable eigenvalues are necessarily $O(\delta^3)$ as $\delta\to 0^+$.

\subsubsection{Unstable eigenvalues converge to zero as $\delta\to 0^+$}\label{s:cvgzero}

We begin by ruling out the existence of sufficiently large unstable eigenvalues for \eqref{spec1}.  
Setting $Z:=(\tau,u,\bar{\tau}^{-2}u')^T$, and recalling that $\bar{u}=q-\bar c\bar\tau$ for some constant $q\in\RM$, we first write \eqref{spec1} as a first order system 
\begin{equation}\label{evans1}
Z'(x)=A(x,\lambda)Z(x),
\end{equation} 
where
\begin{equation}\label{Amatrix}
A(x,\lambda):=
\left(
\begin{array}{ccc}
\lambda/\bar c & 0 & -\bar\tau^2/\bar c\\
0 & 0 & \bar\tau^2\\
\frac{(q-\bar c\bar\tau)^2-\bar\alpha_x-\bar\alpha\lambda/\bar c}{\nu} & 
       \frac{\lambda+2\bar\tau(q-\bar c\bar\tau)}{\nu} & \frac{-\bar c\bar\tau^2+\bar\alpha \tau^2/\bar c}{\nu}
\end{array}
\right),
\qquad
\bar \alpha:=\bar\tau^{-3}(F^{-2}+2\bar c\nu\bar\tau').  
\end{equation}
Setting 
$
B(x,\lambda):=
\left(
\begin{array}{ccc}
\lambda/\bar c & 0 & 0\\
0 & 0 & \bar\tau^2\\
-\frac{\bar\alpha\lambda/\bar c}{\nu} & 
       \frac{\lambda}{\nu} & 0
\end{array}
\right)
$
and noting that $A-B$ is $O(1)$ as $|\lambda|\to\infty$, we expect that the spectral problem \eqref{evans1} is governed by the principal part $B(x,\lambda)$
for $|\lambda|$ sufficiently large.  
A
direct inspection shows that the eigenvalues of $B$ are given by $\frac{\lambda}{\bar c}$
and $\pm\bar\tau\sqrt{\frac{\lambda}{\nu}}$, so that the eigenvalues of $B$ have two principal growth rates as $|\lambda|\to\infty$.  In the following
we keep track of both of these spectral scales by a series of carefully chosen coordinate transformations preserving periodicity; for 
details of these transformations, see Section 4.1 of \cite{BJRZ}. 

With the above preliminaries, we begin by verifying that the unstable spectra for the system \eqref{spec1} 
are $O(1)$ for $\delta$ sufficiently small. 
Throughout, we use the notation $\|u\|^2=\int_{0}^{X_\delta} |u(x)|^2dx$.  
Note that although we focus on uniformity in $\delta$ in the forthcoming estimates of the unstable spectra, 
the norms $\|\cdot\|$ do depend on $\delta$ through the period $X_\delta$.

\begin{lemma}\label{lemma1}
Let $(\bar\tau_\delta,\bar u_\delta)$ be a family of periodic traveling wave solution of \eqref{swl}
defined as in Theorem~\ref{maine} for all $\delta=\sqrt{F-2}\in(0,\delta_0)$ for some $\delta_0>0$ sufficiently small.
Then, there exist 
constants $R_0,\eta>0$ and $0<\delta_1<\delta_0$ 
such that, for all $\delta\in(0,\delta_1)$, the spectral
problem \eqref{spec1} has no $L^\infty(\RM)$ eigenvalues with $\Re(\lambda)\geq-\eta$ and $|\lambda|\geq R_0$.
\end{lemma}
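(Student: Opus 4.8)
The plan is to establish the bound by high-frequency energy estimates performed directly on the second-order scalar form of the spectral problem \eqref{spec1}, exploiting its hyperbolic-parabolic structure, rather than on the first-order system \eqref{evans1}. The rough picture is that for $|\lambda|$ large the operator is dominated by its principal part $B(x,\lambda)$ whose eigenvalues scale like $\lambda/\bar c$ and $\pm\bar\tau\sqrt{\lambda/\nu}$; the task is to show that no balance between these scales and the lower-order coefficients can produce a nontrivial $X_\delta$-periodic (up to a Floquet factor) solution with $\Re\lambda\geq-\eta$ and $|\lambda|$ beyond a threshold $R_0$, and that these constants can be chosen uniformly in $\delta$. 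Since all coefficients of \eqref{spec1} are, by Theorem~\ref{maine} and the explicit expansions \eqref{newscale1}--\eqref{newscale2}, uniformly close (in $C^1$, on one period) to the constants $(\tau_0,u_0,c_0)$ as $\delta\to0^+$ --- the $\delta$-dependence entering amplitudes at order $\delta^2$ and the long period $X_\delta\sim\delta^{-1}$ affecting only the \emph{length} of the interval of integration, not the pointwise size of coefficients --- the estimates are genuinely uniform once we normalize by $\|\cdot\|^2=\int_0^{X_\delta}|\cdot|^2\,dx$.

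The key steps, in order, are as follows. \textbf{Step 1:} Reduce \eqref{spec1} to a single second-order equation for $u$ by using the first equation $(u+\bar c\tau)'=\lambda\tau$ to solve for $\tau$ in terms of $u$ and $u'$; specifically $\bar c\tau'=\lambda\tau-u'$, so that (for $\lambda\neq 0$) one may eliminate $\tau$ up to the invertible operator $(\bar c\partial_x-\lambda)$, yielding an equation of the form $\nu(\bar\tau^{-2}u')' = (\lambda+2\bar u\bar\tau)u - \bar c u' + (\text{lower-order in }\tau)$, where the $\tau$-terms are controlled via $\tau = (\bar c\partial_x-\lambda)^{-1}(-u')$ and the elementary resolvent bound $\|(\bar c\partial_x-\lambda)^{-1}\|_{L^2\to L^2}\leq C/|\lambda|$ valid on periodic (Floquet) functions when $\Re\lambda\geq-\eta$ and $|\lambda|\geq R_0$, uniformly in $\delta$. \textbf{Step 2:} Take the $L^2$ inner product of the resulting equation with $\bar u$ (or integrate against $\bar{\bar\tau}^{\,2}$-weighted conjugate of $u$), integrate by parts over one period using the Floquet boundary conditions (no boundary terms), and separate real and imaginary parts. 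The parabolic term $\nu\int\bar\tau^{-2}|u'|^2$ appears with a good sign; the term $\lambda\|u\|^2$ carries the dominant spectral weight. \textbf{Step 3:} Estimate all remaining terms --- those involving $\bar u'$, $\bar c u'$, $2\bar u\bar\tau u$, and the eliminated $\tau$-contributions --- using Cauchy--Schwarz, Young's inequality with small parameter, the $\|u'\|$ absorbed into the parabolic term, and the $1/|\lambda|$ gain from the resolvent in Step 1. Taking real parts forces $\Re\lambda\,\|u\|^2 + \nu c\|u'\|^2 \leq C(\|u\|^2 + |\lambda|^{-1}\|u'\|^2 + \ldots)$, and taking imaginary parts (or combining with a second test function to capture the $\sqrt\lambda$ scale) pins down $\|u'\|$ relative to $|\lambda|\|u\|^2$. \textbf{Step 4:} Conclude that for $|\lambda|\geq R_0$ with $R_0$ large enough and $\eta>0$ small enough --- both independent of $\delta\in(0,\delta_1)$ --- the only solution is $u\equiv0$, hence $\tau\equiv0$, contradicting the existence of an eigenfunction.

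The main obstacle I anticipate is \textbf{the two-scale structure} highlighted in the excerpt: the eigenvalues of the principal part $B$ grow at the two different rates $\lambda/\bar c$ and $\pm\bar\tau\sqrt{\lambda/\nu}$, so a single naive energy estimate with one test function will not simultaneously control both the "hyperbolic" mode (carried by $\tau$, growth $\sim\lambda$) and the "parabolic" mode (carried by $u$, growth $\sim\sqrt\lambda$). The careful part is organizing the elimination/coordinate changes so that the hyperbolic component is handled by the algebraic resolvent bound $\|(\bar c\partial_x-\lambda)^{-1}\|\lesssim|\lambda|^{-1}$ while the parabolic component is handled by the diffusive energy estimate, \emph{without} the lower-order couplings between them (the $\bar\alpha\tau'$ term in \eqref{spec1}, carrying a derivative on $\tau$) spoiling either bound. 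Here the derivative on $\tau$ is the delicate point: $\tau' = \bar c^{-1}(\lambda\tau - u')$, so $\bar\alpha\tau'$ contributes both a $\lambda\tau$ term (absorbed by resolvent) and a $u'$ term (absorbed by the parabolic term); tracking the constants so that the absorption is legitimate for \emph{large} $|\lambda|$ is exactly where the threshold $R_0$ is produced. I would follow the sequence of periodicity-preserving transformations from \cite[Section~4.1]{BJRZ} referenced in the text, which is precisely designed to disentangle these two scales, and then the uniformity in $\delta$ follows because all transformed coefficients remain $C^1$-close to constants with the interval length dropping out after normalization.
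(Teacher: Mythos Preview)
Your Step~1 contains a genuine gap that undermines the whole scheme. The resolvent bound $\|(\bar c\partial_x-\lambda)^{-1}\|_{L^2\to L^2}\leq C/|\lambda|$ on Floquet functions is \emph{false} in the region you need it. Since $\bar c$ is a real constant, the spectrum of $\bar c\partial_x$ on $e^{i\xi X_\delta}$-quasiperiodic functions is the lattice $\{i\bar c(\xi+2\pi n/X_\delta):n\in\ZZ\}\subset i\RM$, and for $\lambda$ with $|\Im\lambda|$ large but $\Re\lambda\in[-\eta,\eta]$ the distance from $\lambda$ to this spectrum can be as small as $|\Re\lambda|\leq\eta$ (indeed, as small as $O(\delta)$ for suitable $\xi$, since the lattice spacing is $2\pi\bar c/X_\delta\sim\delta$). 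Hence $\|(\bar c\partial_x-\lambda)^{-1}\|$ can blow up along the imaginary axis, and the $|\lambda|^{-1}$ gain you rely on to absorb the $\tau$-terms simply is not available there. This is precisely the region the lemma must exclude, so the argument as written does not close.

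The paper's proof circumvents this by \emph{not} inverting $\bar c\partial_x-\lambda$. Instead it carries out the block-diagonalization from \cite[Section~4.1]{BJRZ} that you mention only in your final paragraph: after the periodic change of variables $W=PZ$, the hyperbolic block has diagonal entry $\lambda/\bar c+\theta_0+\theta_1/\lambda$, where $\theta_0=\bar\alpha\bar\tau^2/(\bar c\nu)$ arises from the coupling to the parabolic modes and satisfies $\theta_0\to\tau_0^{1/2}/(2\nu)>0$ as $\delta\to0^+$. This strictly positive shift is the whole point: it provides coercivity of the hyperbolic mode even when $\Re\lambda$ is near zero, and the energy estimate then works on the diagonalized system $W=(W_H,W_{D,\pm})$. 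Your elimination of $\tau$ discards exactly this damping contribution. If you intend to follow the \cite{BJRZ} transformations, that \emph{is} the paper's proof, and the scalar reduction in Steps~1--3 should be dropped rather than presented as the main line.
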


\begin{proof}
Suppose that $\lambda$ is an $L^\infty(\RM)$ eigenvalue for the spectral problem \eqref{spec1} and let 
$(\tau,u)$ be a corresponding eigenfunction satisfying $(u,\tau,\tau')(X_\delta)=e^{i\xi}(u,\tau,\tau')(0)$ for some $\xi\in[-\pi/X_\delta,\pi/X_\delta)$.
As described above, setting $Z:=(\tau,u,\bar{\tau}^{-2}u')^T$ allows us to write  \eqref{spec1} as 
the first order system \eqref{evans1}, where the coefficient matrix $A(x,\lambda)$ is given explicitly in \eqref{Amatrix}.
By performing a series of $X_\delta$-periodic change of variables, carried out in detail in \cite[Section 4.1]{BJRZ}, we find there exists a $X_\delta$-periodic change of variables $W(\cdot)=P(\cdot;\lambda,\delta)Z(\cdot)$ 
that transforms the above spectral problem into 
\be\label{Dequation}
W'(x)=(D_{\lambda}+N)(x,\lambda)W(x),
\ee
supplemented with the boundary condition $W(X_\delta)=e^{i\xi}\,W(0)$, 
where the matrices $D_\lambda, N$ are defined as
\be\label{Dmatrix}
\displaystyle
D_\lambda(\cdot,\lambda)={\rm diag}\left(\frac{\lambda}{\bar c}+\theta_0+\frac{\theta_1}{\lambda},\quad \bar \tau\sqrt{\frac{\lambda}{\nu}},\quad - \bar \tau\sqrt{\frac{\lambda}{\nu}}\right),
\qquad
\theta_0  = \frac{\bar\alpha\bar\tau^2}{\bar c\,\nu},
\ee
$\bar \alpha$ as in \eqref{Amatrix}, and
	$
N:=\left(
\begin{array}{cc}
0&N_{H,D}\\
N_{D,H}&N_{D,D}\\
\end{array}
\right)
$
with $N_{D,D}$ a $2\times2$ matrix. 
Here,
$N(\cdot,\lambda)$ is an $X_\delta$-periodic matrix and, moreover, the individual blocks of the matrix $N(\cdot,\lambda)$ expand as
\begin{equation}\label{Nmatrix}
\begin{array}{rcl}
N_{D,D}(\cdot,\lambda)&=&N_{D,D}^0+\lambda^{-\frac12}N_{D,D}^1+\lambda^{-1}N_{D,D}^2\\
N_{D,H}(\cdot,\lambda)&=&N_{D,H}^0+\lambda^{-\frac12}N_{D,H}^1\\
N_{H,D}(\cdot,\lambda)&=&N_{H,D}^0+\lambda^{-\frac12}N_{H,D}^1+\lambda^{-1}N_{H,D}^2+\lambda^{-\frac32}N_{H,D}^3\ ,
\end{array}
\end{equation}
with $|N^j_{*,*}|$ bounded uniformly
in $\delta\ll 1$.
Explicit formulae for 
$N^j_{k,l}$ and $\theta_{1}$ are given in Appendix \ref{s:HFB}.

Now, a crucial observation is that, by Theorem \ref{maine}, 
together with the scalings 
\eqref{newscale1}--\eqref{newscale2},
we have
\[
\lim_{\delta\to 0^+}\theta_0(x)\ =\ \frac{\tau_0^{1/2}}{2\nu}>0.
\]
It follows that $\theta_0$ is strictly positive and uniformly bounded away from zero, for all $\delta>0$ sufficiently small.
Thus, there exists $\eta,\delta_1>0$ sufficiently small and $R_0>0$ sufficiently large such that if $|\lambda|\geq R_0$ and 
$\Re(\lambda)\geq-\eta$, then the quantity $\Re(\frac{\lambda}{\bar c}+\theta_0+\frac{\theta_1}{\lambda})$ 
is strictly positive and bounded away from zero, uniformly in $\delta$ for $0<\delta<\delta_1$.
Likewise, 
by choosing $\delta_1$ smaller if necessary, 
the quantity $|\lambda|^{1/2}\Re(\bar \tau\sqrt{\frac{\lambda}{\nu}})$ may be taken to be strictly positive and bounded away from $0$, uniformly in $\delta$ in the same set of parameters.

Finally, under the same conditions, taking $R_0$ possibly larger 
and decomposing 
$$
W:=(W_H,W_{D,+},W_{D,-})^T,
$$
observing that $|W_H|$, $|W_{D,+}|$ and $|W_{D,-}|$ are $X_\delta$-periodic functions, 
it follows by standard energy estimates,
taking the real part of the complex $L^2[0,X_\delta]$-inner product of each $W_j$ against the 
$W_j$-coordinate of \eqref{Dequation}--\eqref{Nmatrix}, 
using the above-demonstrated 
coercivity (nonvanishing real part) 
of the entries of the leading-order diagonal term $D_\lambda$, 
and rearranging,
that 
there exists a constant $C>0$ independent of $R_0$ such that, for all $0<\delta<\delta_1$, $\Re(\lambda)\geq-\eta$ and $|\lambda|\geq R_0$, we have
\[
\|W_H\|^2 \leq C\left(\|W_{D,+}\|+\|W_{D,-}\|\right)\|W_H\|
\textrm{ and }
\|W_{D,+}\|^2+\|W_{D,-}\|^2 \leq
CR_0^{-1/2}\left(\|W_{D,+}\|+\|W_{D,-}\|\right) \|W_H\|\,.
\]
Thus,
$\|W_H\| \leq C\left(\|W_{D,+}\|+\|W_{D,-}\|\right)\leq C^2R_0^{-1/2} \|W_H\|$,
	yielding a contradiction for $R_0$ sufficiently large.
\end{proof}

Next, we rescale the spatial variable $x$ as $y=\delta\,x$, noting that, since $\delta X_\delta\ =\ \nu^{1/2}\tau_0^{-5/4}X$, the period  is then independent of $\delta$. Then the first-order system \eqref{evans1} can be rewritten as 
\begin{equation}\label{zeqn}
\delta\,Z'(y) =\ \left(A_0(\lambda)\,+\,\delta^2\,A_1(y;\lambda,\delta)\right)\ Z(y)
\end{equation}
coupled with the boundary condition $Z(\delta\,X_\delta)=e^{i\xi}\,Z(0)$ for some $\xi\in[-\pi/\delta X_\delta,\pi/\delta X_\delta)$, where
\[
A_0(\lambda)
= 
\left(\begin{array}{ccc}
\displaystyle 2\lambda\tau_0^{3/2} & 
\displaystyle -2\lambda \tau_0^{3/2} & 0\\
0 & 0 &\displaystyle  \tau_0^2\,\nu^{-1}\\
\displaystyle \lambda-2\sqrt{\tau_0} &
\displaystyle  2\lambda & 0
\end{array}\right)
\]
is constant
and $A_1(\cdot;\lambda,\delta)$ is uniformly bounded (for $\lambda$ and $\delta$ in any compact set).  More precisely, for any $\delta$
in a compact subset of $[0,\delta_0)$, we have
$$
A_1(\cdot;\lambda,\delta)= \begin{pmatrix}\mathcal O(\lambda)&\mathcal O(\lambda)&0\\ 0&0&\mathcal O(1)\\ \mathcal O(1)&\mathcal O(1)&\mathcal O(1)\end{pmatrix}.
$$
By analyzing  the eigenvalues of $A_0(\lambda)$, we now show that the possible unstable eigenvalues for \eqref{spec1} converge to the origin
as $\delta\to 0^+$.

\begin{lemma}\label{lemma2}
Let $(\bar\tau_\delta,\bar u_\delta)$ be a family of periodic traveling wave solution of \eqref{swl}
defined as in Theorem~\ref{maine} for all $\delta=\sqrt{F-2}\in(0,\delta_0)$ for some $\delta_0>0$ sufficiently small.
%
%
Then, for
every $\eps>0$, there exists a $\delta_1\in(0,\delta_0)$ such that for all $\delta\in(0,\delta_1)$,  
the spectral problem \eqref{spec1} has no $L^\infty(\RM)$ eigenvalues with $\Re(\lambda)\geq 0$ and $|\lambda|\geq\eps$.
\end{lemma}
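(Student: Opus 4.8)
The plan is to run a consistent-splitting argument for the rescaled system \eqref{zeqn}, exploiting that after the change of variables $y=\delta x$ the profile lives on an interval $[0,\delta X_\delta]$ of length $\delta X_\delta=\nu^{1/2}\tau_0^{-5/4}X$ \emph{independent of $\delta$}. First I would invoke Lemma~\ref{lemma1}: after shrinking $\delta_0$ it supplies $R_0>0$ such that, for all small $\delta$, \eqref{spec1} has no $L^\infty(\RM)$-eigenvalue with $\Re\lambda\ge0$ and $|\lambda|\ge R_0$. Hence, given $\eps>0$, it remains only to exclude eigenvalues $\lambda$ in the compact set $K:=\{\eps\le|\lambda|\le R_0,\ \Re\lambda\ge0\}$. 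For such $\lambda$, a Bloch eigenfunction is precisely a nontrivial solution of \eqref{zeqn} on $[0,\delta X_\delta]$ satisfying $Z(\delta X_\delta)=e^{i\xi}Z(0)$ for some $\xi\in\RM$, i.e. $e^{i\xi}$ is an eigenvalue of the monodromy matrix of \eqref{zeqn} over one (fixed) period.

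The heart of the matter is that, because $|\lambda|\ge\eps$ keeps $\lambda$ away from the origin — where $A_0$ degenerates to a single nilpotent Jordan block — the constant matrix $A_0(\lambda)$ is \emph{uniformly hyperbolic} on $K$. Concretely, I would compute the characteristic polynomial of $A_0(\lambda)$ (built from \eqref{Amatrix} evaluated at the limiting constant profile furnished by Theorem~\ref{maine}) and check that it has no purely imaginary root whenever $\Re\lambda\ge0$ and $\lambda\ne0$; by continuity and compactness this yields $\rho>0$ with $|\Re\mu|\ge\rho$ for every eigenvalue $\mu$ of $A_0(\lambda)$ and every $\lambda\in K$. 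Verifying this absence of neutral modes of $A_0(\lambda)$ on $\{\Re\lambda\ge0\}\setminus\{0\}$ is the only step with genuine content; everything around it is soft.

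Granting the uniform gap, the conclusion is standard. Writing \eqref{zeqn} as $Z'=(\delta^{-1}A_0(\lambda)+\delta A_1)Z$ and noting that $A_1$ is bounded uniformly over $[0,\delta X_\delta]\times K\times(0,\delta_0)$, the perturbation $\delta A_1$ has norm $O(\delta)$, negligible against the dichotomy rate $\rho/\delta$ of the constant-coefficient part; by roughness of exponential dichotomies, \eqref{zeqn} then has an exponential dichotomy on $[0,\delta X_\delta]$ with rate $\ge\rho/(2\delta)$, uniformly in $\lambda\in K$. Consequently the monodromy over $[0,\delta X_\delta]$ admits an invariant splitting into a block whose eigenvalues all have modulus $\ge e^{\rho\nu^{1/2}\tau_0^{-5/4}X/(2\delta)}$ and a block whose eigenvalues all have modulus $\le e^{-\rho\nu^{1/2}\tau_0^{-5/4}X/(2\delta)}$; for $\delta$ small (depending on $\eps$, $X$, $\nu$, $\tau_0$) none of these lies in $\{\,1/2\le|z|\le2\,\}$, so $e^{i\xi}$ is never such an eigenvalue. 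Hence no $\lambda\in K$ is a Bloch eigenvalue, which is the claim. The main obstacle is the hyperbolicity computation for $A_0(\lambda)$: one must verify by hand that no eigenvalue of $A_0(\lambda)$ touches the imaginary axis for $\Re\lambda\ge0$, $\lambda\ne0$ (the degeneracy at $\lambda=0$ being harmless because it is excluded by $|\lambda|\ge\eps$), and one must make sure the dichotomy estimate is genuinely uniform on $K$.
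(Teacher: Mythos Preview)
Your proposal is correct and follows essentially the same route as the paper: reduce to the compact set $K$ via Lemma~\ref{lemma1}, establish consistent splitting of $A_0(\lambda)$ on $K$ (the one substantive computation, which the paper carries out explicitly via the characteristic polynomial \eqref{charA0} and a continuation argument in $\Omega$), and then conclude by a semiclassical/tracking argument that the paper phrases in WKB language while you phrase it as roughness of exponential dichotomies---the same mechanism.
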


\begin{proof}
By Lemma \ref{lemma1}, it is sufficient to consider $\lambda$ on a compact set
$\eps \leq |\lambda|\leq R_0$, $\Re \lambda \geq 0$, 
whence \eqref{zeqn}, $\delta\to 0^+$ represents
a uniform family of semiclassical limit problems, with $A_0$, $A_1$ varying
in compact sets.
By standard WKB-type estimates (see, e.g., the ``Tracking Lemma'' of \cite{GZ,ZH,PZ}), these have no bounded solutions for $0<\delta\leq \delta_0$ sufficiently small, so long as $A_0$ satisfies
{\it consistent splitting}, meaning that its eigenvalues have nowhere-vanishing
real parts: equivalently, $A_0(\lambda)$ has no purely imaginary eigenvalue for $\Re \lambda>0$ and $\eps\leq |\lambda|\leq R_0$.
Indeed, it is easy to see using convergence as $\delta\to 0$ of the associated periodic Evans function of Gardner \cite{G} (following from
continuous dependence on parameters of solutions of ODE), 
the correspondence between bounded solutions and
zeros of the Evans function,
and analyticity of the Evans function together with
properties of limits of analytic functions,
that $A_0(\lambda)$ has a pure imaginary eigenvalue, i.e., the $\delta=0$ version of \eqref{zeqn} has a bounded solution,
if and only if there are bounded solutions of \eqref{zeqn} for a sequence $\lambda_\delta\to \lambda$ as $\delta\to 0$.

To prove the lemma therefore, we establish consistent splitting
of $A_0$ for $\lambda \in \Lambda:=
\{\lambda: \eps\leq |\lambda|, \Re \lambda>0\}$.
The eigenvalues $\gamma(\lambda)$ of the matrix $A_0(\lambda)$ are the solutions of the equation
\begin{equation}\label{charA0}
\gamma^3-2\lambda\tau_0^{3/2}\gamma^2-2\lambda\tau_0^2\nu^{-1}\gamma+2\lambda^2\tau_0^{7/2}\nu^{-1}+4\lambda\tau_0^4\nu^{-1}=0.
\end{equation}
Suppose that $\gamma=i\Omega\in\RM i$ is an eigenvalue of $A_0(\lambda)$ for 
some $\lambda\in\Lambda$.  From \eqref{charA0} it follows
that $\lambda$ must be a root of the quadratic equation
\begin{equation}\label{charA02}
2\lambda^2\tau_0^{7/2}\nu^{-1}+2\lambda\left(2\tau_0^4\nu^{-1}+\Omega^2\tau_0^{3/2}-2i\Omega\tau_0^2\nu^{-1}\right)-i\Omega^3=0.
\end{equation}
By Lemma \ref{lemma1} above, if $\Omega$ is sufficiently large then 
the roots of \eqref{charA02} satisfy $\Re(\lambda)\leq 0$, else, by the discussion surrounding the Evans function, above, there would be bounded solutions of
\eqref{zeqn} for $\Re \lambda>0$, $|\lambda|$ large, and $\delta$ arbitrarily small, a contradiction.
(Alternatively, one may repeat the steps of the proof of Lemma \ref{lemma1} 
for \eqref{zeqn} with $\delta=0$.)
Increasing $\Omega$ then from the supposed value corresponding to an eigenvalue of $A_\lambda$, and tracking the corresponding root $\lambda$ of
\eqref{charA02}, we see that eventually this root must cross the imaginary axis in moving from $\Re \lambda \geq 0$ to $\Re \lambda \leq 0$.
Thus, it is sufficient to search for 
roots of \eqref{charA02} of the form $\lambda=i\Theta$ for some $\Theta\in\RM$.  Substituting this ansatz into \eqref{charA02}
and grouping real and imaginary parts implies that $\Omega$ and $\Theta$ satisfy the system of equations
$\Theta\,(\tau_0^{3/2}\Theta-\Omega)=0$ and 
$2\tau_0^{3/2} \Theta=\Omega\times\Omega^2/[\Omega^2+2\nu^{-1}\tau_0^{3/2}]$, 
from which it easily follows that $\Omega=\Theta=0$.  It follows that for all $\Omega\neq0$, the real parts of the roots $\lambda_j(\Omega)$ of \eqref{charA02}
have constant signs, so that, for each $\eps>0$, $A_0(\lambda)$ indeed has consistent splitting in the region $\Lambda$, and
the lemma immediately follows.

\end{proof}

\subsubsection{Unstable eigenvalues are $O(\delta^3)$}
Next, we bootstrap the estimates of Lemma \ref{lemma1} and Lemma \ref{lemma2} to provide a second energy estimate on the reduced ``slow", or KdV, block of the spectral
problem \eqref{spec1} in the limit $\delta\to 0^+$.  
Notice that {\it this result relies heavily on the fact that the corresponding spectral problem for the linearized KdV equation about a cnoidal wave $T_0(\cdot;a_0,k,\mathcal{G}(k))$ described in Theorem~\ref{kdvks-evexpand} has been explicitly solved in \cite{BD,Sp} using the associated completely integrable structure,} and in particular has been found to be spectrally stable\footnote{In the 
Hamiltonian sense, meaning the linearization about $T_0$ of the KdV equation 
has purely imaginary spectrum.}
for all $k\in(0,1)$.

\begin{proposition}\label{Prop1}
Let $(\bar\tau_\delta,\bar u_\delta)$ be a family of periodic traveling wave solution of \eqref{swl}
defined as in Theorem~\ref{maine} for all $\delta=\sqrt{F-2}\in(0,\delta_0)$ for some $\delta_0>0$ sufficiently small.\\
Then there exist positive constants $C_1$, $C_2$ and $\delta_1\in(0,\delta_0)$ such that for all $\delta\in(0,\delta_1)$ the spectral
problem \eqref{spec1} has no $L^\infty(\RM)$ eigenvalues with 
$\Re(\lambda)\geq C_1\delta^4$ or ($\Re(\lambda)\geq 0$ and $|\lambda|\geq C_2\delta^3$).
\end{proposition}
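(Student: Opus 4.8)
The plan is to bootstrap Lemmas~\ref{lemma1}--\ref{lemma2}, which already confine every $L^\infty(\RM)$ eigenvalue with $\Re\lambda\geq0$ to the region $|\lambda|\leq\eps_0$ for an $\eps_0>0$ we are free to fix small; the goal is to push this down to $|\lambda|=O(\delta^3)$, $\Re\lambda=O(\delta^4)$. First I would introduce the KdV scaling $\lambda=\delta^3\Lambda$ in the rescaled system \eqref{zeqn}. Since every entry of $A_0(\lambda)$ is affine in $\lambda$, one has $A_0(\delta^3\Lambda)=A_0(0)+\delta^3\Lambda\,B_0$ with $B_0$ constant, and $A_0(0)$ is nilpotent: by \eqref{charA0} its characteristic polynomial is $\gamma^3$ and it has rank two, hence it is a single $3\times3$ Jordan block, encoding the $\partial_\theta^3$ of KdV. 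Writing \eqref{zeqn} as $\delta Z'=(A_0(0)+\delta^3\Lambda B_0+\delta^2A_1)Z$, I would then perform an approximate block-diagonalization by a sequence of $X_\delta$-periodic near-identity transformations --- following the scheme of \cite[Section 4.1]{BJRZ} already used in Lemma~\ref{lemma1} --- that peels off two ``fast hyperbolic'' directions, along which the leading diagonal entry has real part bounded away from zero, from a ``slow KdV block''. On the fast directions the estimate closes exactly as in Lemma~\ref{lemma1}, so all the content lies in the slow block.

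Undoing the $\delta^3$-rescaling of $\lambda$, the slow block becomes the eigenvalue problem $\mathcal{L}^{\rm SV}_\delta w=\lambda w$ for an operator $\mathcal{L}^{\rm SV}_\delta=\delta^3\big(\mathcal{L}_0[T_0]+\delta\,\mathcal{L}_1+O(\delta^2)\big)$, with $\mathcal{L}_0[T_0]=-\partial_\theta^3-\partial_\theta(T_0-\sigma_0)$ the linearized KdV operator of Proposition~\ref{p:kdvsolnexpand} and $\mathcal{L}_1$ precisely the KdV-KS correction --- the dissipative operator $-(\partial_\theta^2+\partial_\theta^4)$ together with a first-order term built from the profile correction $T_1$ of \eqref{cork} --- so that the slow block coincides, up to negligible terms, with the Fenichel-reduced KdV-KS spectral problem. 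The decisive step is an energy estimate for $\mathcal{L}^{\rm SV}_\delta w=\lambda w$ in the inner product adapted to the completely integrable structure of $\mathcal{L}_0[T_0]$: by the explicit diagonalization of the linearized KdV spectrum in \cite{BD,Sp} (recalled in Section~\ref{s:limit}), $\mathcal{L}_0[T_0]$ is skew-symmetric in that inner product and so contributes nothing to $\Re\langle\mathcal{L}^{\rm SV}_\delta w,w\rangle$; the dissipative part of $\delta^4\mathcal{L}_1$ contributes $\delta^4\big(\|\partial_\theta w\|^2-\|\partial_\theta^2 w\|^2\big)$; and every remaining term (the profile correction, the non-skew part of $\mathcal{L}_1$ in the adapted inner product, the $O(\delta^5)$ remainder) is bounded by $C\delta^4\|w\|^2$ in the adapted norm, which is equivalent to $L^2$ on the range of bounded frequencies. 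After the interpolation $\|\partial_\theta w\|^2\leq\tfrac12\|\partial_\theta^2 w\|^2+C\|w\|^2$ this forces $\|\partial_\theta^2 w\|^2\leq C\|w\|^2$ whenever $\Re\lambda\geq0$; since an eigenfunction of $\mathcal{L}^{\rm SV}_\delta\approx\delta^3\mathcal{L}_0[T_0]$ at eigenvalue $\lambda$ must oscillate at frequency $\sim(|\lambda|/\delta^3)^{1/3}$, this in turn forces $|\lambda|\leq C_2\delta^3$, and feeding $|\lambda|=O(\delta^3)$ back into the energy identity gives $\Re\lambda\leq C_1\delta^4$ --- the two assertions.

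The main obstacle is the approximate block-diagonalization with a \emph{nilpotent} leading term: the usual spectral-gap reduction does not apply to $A_0(0)$, so the $\delta$-graded transformations must be built by hand, and one must check that the perturbations $\delta^2A_1$ and $\delta^3\Lambda B_0$ reorganize \emph{exactly} into $\delta^3\mathcal{L}_0[T_0]+\delta^4\mathcal{L}_1$ on the slow block, with normalizations matching the KdV-KS expansion \eqref{kdvks-evexpand} so that the corrector seen here agrees with the one of \cite{JNRZ1}; this is precisely the calculation the roadmap describes as recovering, after rescaling and up to negligible terms, the Fenichel-reduced KdV-KS spectral problem. A secondary difficulty is the corner near the origin: at $\xi=0$ the operator $\mathcal{L}_0[T_0]$ has a nontrivial three-dimensional Jordan block at $\lambda=0$ (cf.\ \eqref{kdvspecexpand}) and the adapted inner product degenerates there, so the energy estimate applies cleanly only for $\xi$ bounded away from $0$, where $0\notin\sigma(\mathcal{L}_\xi[T_0])$; the small-$\xi$, small-$|\lambda|/\delta^3$ region must be handled either by a direct treatment of the three-dimensional generalized kernel or, as in Section~\ref{evans_expansion}, by a separate Evans-function expansion.
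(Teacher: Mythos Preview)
Your overall strategy --- reduce to the linearized KdV operator and exploit its neutral stability --- is the right one, but two structural claims are incorrect and the argument as written does not close.

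First, there is no fast/slow splitting to perform. The St.~Venant spectral problem \eqref{evans1} is already a $3\times 3$ first-order system, and after the rescaling \eqref{e:spectral-scaling} all three directions together constitute the linearized KdV block $H_0$ of \eqref{e:evans-H0}; contrast this with KdV-KS, whose spectral problem is fourth order and does split into one fast direction plus a $3\times 3$ KdV block. As Remark~\ref{regrmk} emphasizes, the St.~Venant limit is a \emph{regular} perturbation of KdV, not a singular one, so there are no fast hyperbolic directions to peel off. Second, and more damaging, the correction $\mathcal{L}_1$ cannot be $-(\partial_\theta^2+\partial_\theta^4)$: the problem is third order, so no $\partial_\theta^4$ can appear. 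The actual $O(\tilde\delta)$ correction, read off from $H_1,H_2$ in \eqref{e:evans-H12}, is second order. The later miracle (Proposition~\ref{p:evansexpand}) is that its \emph{spectral} first-order corrector $\lambda_1(\xi,\Lambda_0)$ coincides with the KdV-KS one, but the operator itself is different. Consequently your energy identity does not produce the $-\delta^4\|\partial_\theta^2 w\|^2$ damping you rely on, and the chain ``bounded second derivative $\Rightarrow$ bounded KdV frequency $\Rightarrow$ $|\lambda|\leq C_2\delta^3$'' breaks.

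The paper's proof proceeds instead in two distinct diagonalization steps. First, \emph{without} yet invoking the KdV structure, one rescales by powers of $\lambda^{1/3}$ so that the nilpotent $A_0(0)$ becomes a cube-root-of-unity diagonal, replaces the periodic coefficient at the next order by its mean via a near-identity change of variables, and from the resulting constant-coefficient diagonal obtains the intermediate exclusion $\Re(\lambda)|\lambda|^{1/3}+|\Im(\lambda)||\lambda|^{2/3}\lesssim\delta^4$, hence $|\lambda|=O(\delta^{12/5})$. Only then, with $|\Lambda|:=|\lambda|\delta^{-3}=O(\delta^{-3/5})$, is the explicit \emph{periodic Floquet diagonalization} of $H_0$ furnished by complete integrability (cf.~\cite{BD}) invoked: a variable-coefficient change $P(\cdot;\Lambda)$ conjugates $H_0$ to a constant diagonal with entries $\mu_0(\Lambda),\mu_\pm(\Lambda)$ of known sign for $\Re\Lambda\geq 0$, and a further near-identity diagonalization of the $O(\Lambda^{2/3}\delta)$ correction --- small precisely because the intermediate bound gives $|\Lambda|^{1/3}\delta=O(\delta^{4/5})$ --- yields an energy estimate in this diagonal basis excluding $|\Lambda|^{2/3}+\Re(\Lambda)/(\delta|\Lambda|^{2/3})\gg 1$. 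Your proposal skips the intermediate step, so the KdV reduction would need to be uniform for $|\Lambda|$ up to order $\delta^{-3}$, making the $\Lambda^{2/3}\delta$ correction large and the second diagonalization illegitimate. Finally, while an adapted inner product in which $\mathcal{L}_0[T_0]$ is skew is an appealing idea, recall that for $\xi=0$ the operator has a nontrivial Jordan block at $0$ and hence is not normal in any inner product; the paper sidesteps this entirely by working in the diagonal basis supplied by integrability rather than through a symmetry argument.
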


\begin{proof}
By Lemma \ref{lemma1} and Lemma \ref{lemma2}, the possible unstable eigenvalues for the spectral problem \eqref{spec1} converge to the origin as $\delta\to 0^+$.  To analyze the behavior of these possible unstable eigenvalues further, 
we rescale the unknown $Z$ in \eqref{zeqn} to $W=(Z_1,\lambda^{2/3}Z_2,\lambda^{1/3}Z_3)^T$, $\lambda^{1/3}$ denoting the principle third root, yielding the system 
$$
\delta\,W'(y)= \left(B_0(\lambda)\,+\,\delta^2\,\lambda^{-1/3}\,B_1(y;\lambda,\delta)\right)\ W(y)
$$
with boundary conditions $W(\delta~X_\delta)=e^{i\xi}W(0)$ for some $\xi\in[-\pi/\delta X_\delta,\pi/\delta X_\delta)$, where 
$$
B_0(\lambda) =
\lambda^{1/3} 
\begin{pmatrix}
2\lambda^{2/3}\tau_0^{3/2} & 
-2 \tau_0^{3/2} & 0\\
0 & 0 &\tau_0^2\,\nu^{-1}\\
\lambda-2\sqrt{\tau_0} &
2\lambda^{1/3} & 0
\end{pmatrix}
$$
and $B_1(\cdot;\lambda,\delta)$ is uniformly bounded for $(\lambda,\delta)$ in any compact subset of $\CM\times[0,\delta_0)$.  More precisely, for any $\delta$ in a compact subset of $[0,\delta_0)$, we have
\[
B_1(\cdot;\lambda,\delta) = \begin{pmatrix}O(\lambda^{2/3})&O(\lambda^{2/3})&0\\ 0&0&O(\lambda^{2/3})\\ O(\lambda^{2/3})&O(1)&O(\lambda^{1/3})\end{pmatrix}\,.
\]
To track the most dangerous terms we write the above system as
\begin{equation}\label{evans2}
\delta\,W'(y) =\left(\lambda^{1/3}\,M_0\,+\,(2\lambda^{2/3}\,+\,\delta^2\,\lambda^{-1/3}\,\beta(y))\,M_1
\,+\,\lambda\,R_0(\lambda)\,+\,\delta^2\,R_1(y;\lambda,\delta)\right)\ W(y),
\end{equation}
where 
$$
M_0 = \begin{pmatrix}
0& -2 \tau_0^{3/2} & 0\\
0 & 0 &\tau_0^2\,\nu^{-1}\\
-2\sqrt{\tau_0} &0 & 0
\end{pmatrix},
\,\qquad 
M_1 = 
\begin{pmatrix}
0 & 0 & 0\\
0 & 0 &0\\
0 & 1 & 0
\end{pmatrix},
$$
and the function $\beta(\cdot)$ is some explicit periodic function expressed in terms of $\tau_0$, $\nu$ and asymptotic KdV profiles, 
while the $R_j$ matrices are uniformly bounded functions of $\lambda$ and $\delta$ on compact subsets of $\CM\times[0,\delta_0)$.  The goal is to now reduce the first-order problem \eqref{evans2} to a constant coefficient problem at a sufficiently high order in $\lambda$ and $\delta$.

To begin, we diagonalize $M_0$ by defining the matrices
$$
P_0 = \begin{pmatrix}
1 & 0 & 0\\
0 & \frac{K_0}{2 \tau_0^{3/2}} &0\\
0 & 0 & -\frac{\nu\,K_0^2}{2 \tau_0^{7/2}}
\end{pmatrix}~~
\begin{pmatrix}
1 & 1 & 1\\
1 & \omega &\omega^2\\
1 & \omega^2 & \omega
\end{pmatrix},
\quad
P_0^{-1}=\frac13\begin{pmatrix}
1 & 1 & 1\\
1 & \omega^2 &\omega\\
1 & \omega & \omega^2
\end{pmatrix}~~
\begin{pmatrix}
1 & 0 & 0\\
0 & \frac{2 \tau_0^{3/2}}{K_0} &0\\
0 & 0 & -\frac{2 \tau_0^{7/2}}{\nu\,K_0^2}
\end{pmatrix},
$$
where $K_0=4^{1/3}\tau_0^{4/3}\nu^{-1/3}$ and $\omega=e^{2i\pi/3}$.  
Setting $Y(\cdot)=P_0^{-1}W(\cdot)$, the system \eqref{evans2} can be written in equivalent form 
\begin{equation}\label{evans3}
\delta\,Y'(y) = \left(\lambda^{1/3}\,D_0\,-\frac{\tau_0^2}{3\nu K_0}\,(2\lambda^{2/3}\,+\,\delta^2\,\lambda^{-1/3}\,\beta(y))\,Q_1
\,+\,\lambda\,\tilde R_0(\lambda)\,+\,\delta^2\,\tilde R_1(y;\lambda,\delta)\right)\ Y(y)
\end{equation}
where
$$
D_0\ =\ \begin{pmatrix}
-K_0& 0 & 0\\
0 & -K_0\omega &\\
0 &0 & -K_0\omega^2
\end{pmatrix}\,
\qquad 
Q_1\ =\ 
\begin{pmatrix}
1 & \omega & \omega^2\\
\omega & \omega^2 &1\\
\omega^2 & 1 & \omega
\end{pmatrix},
$$
and $\tilde R_j$ are uniformly bounded in $(\lambda,\delta)$ on compact subsets of $\CM\times[0,\delta_0)$.  This effectively
diagonalizes system \eqref{evans2} to leading order.

Aiming at reducing \eqref{evans3} to a constant-coefficient problem at a 
higher order, 
we choose $q(\cdot)$ satisfying 
\[
q'(\cdot)=\tfrac{\tau_0^2}{3\nu K_0}(\beta(\cdot)-\langle\beta(\cdot)\rangle),
\]
where $\langle\cdot\rangle$ denotes average over one period.  Notice that the periodicity of $\beta(\cdot)$ implies that $q$ is $(\delta~X_\delta)$-periodic. 
Now, since the matrix $(\I_3+\delta^2\lambda^{-1/3}q(\cdot)Q_1)$ is invertible for $\delta^2\lambda^{-1/3}$ sufficiently small,
for such parameters we can make the change of variables 
$$
U(\cdot) =(\I_3+\delta^2\lambda^{-1/3}\,q(\cdot)\,Q_1)\ Y(\cdot),
$$ 
with $U(\cdot)$ satisfying the first-order system
\[
\begin{array}{rcl}
\displaystyle
\delta\,U'(y)&=&\displaystyle
\big(\lambda^{1/3}\,D_0\,-\frac{\tau_0^2}{3\nu K_0}\,(2\lambda^{2/3}\,+\,\delta^2\,\lambda^{-1/3}\,\langle\beta(\cdot)\rangle)\,Q_1
\\
&&\displaystyle
+\,\lambda\,\bar R_0(y;\lambda,\delta)\,+\,\delta^2\,\bar R_1(y;\lambda,\delta)\,+\,\delta^4\,\lambda^{-2/3}\,\bar R_2(y;\lambda,\delta)\big)\ U(y),
\end{array}
\]
with $\bar R_j$ uniformly bounded in $(\lambda,\delta)$ on compact subsets of $\CM\times[0,\delta_0)$.

Next, we diagonalize the system at a higher order. Since $D_0$ has distinct eigenvalues, we may choose a constant matrix $P_1$ such that
the commutator  $[P_1,D_0]$ equals the off-diagonal part of $Q_1$.  Then provided that $\lambda$ and $\delta^2\lambda^{-2/3}$ 
are small enough we may change the unknown to 
$$
S(\cdot) =\left(\I_3+\,\frac{\tau_0^2}{3\nu K_0}\,(2\lambda^{1/3}\,+\,\delta^2\,\lambda^{-2/3}\,\langle\beta(\cdot)\rangle)\,P_1\right)\ U(\cdot)
$$ 
with $S(\cdot)$ satisfying the first-order system
\begin{equation}\label{evans4}
\delta\,S'(y) = \left(\,D_1(\lambda,\delta)\,
+\,\lambda\,\hat R_0(y;\lambda,\delta)\,+\,\delta^2\,\hat R_1(y;\lambda,\delta)\,+\,\delta^4\,\lambda^{-1}\,\hat R_2(y;\lambda,\delta)\right)\ S(y)
\end{equation}
where $\hat R_j$ are uniformly bounded in $(\lambda,\delta)$ on compact subsets of $\CM\times[0,\delta_0)$ and 
$D_1(\lambda,\delta)$ is a constant-coefficient diagonal matrix whose diagonal entries are
\begin{align*}
\mu_0(\lambda,\delta)&=-K_0\,\lambda^{1/3}-\frac{\tau_0^2}{3\nu K_0}\,(2\lambda^{2/3}\,+\,\delta^2\,\lambda^{-1/3}\,\langle\beta(\cdot)\rangle)\\
\mu_+(\lambda,\delta)&=-K_0\,\omega\,\lambda^{1/3}-\frac{\tau_0^2\omega^2}{3\nu K_0}\,(2\lambda^{2/3}\,+\,\delta^2\,\lambda^{-1/3}\,\langle\beta(\cdot)\rangle)\\
\mu_-(\lambda,\delta)&=-K_0\,\omega^2\,\lambda^{1/3}-\frac{\tau_0^2\omega}{3\nu K_0}\,(2\lambda^{2/3}\,+\,\delta^2\,\lambda^{-1/3}\,\langle\beta(\cdot)\rangle).
\end{align*}
In particular, we find
\begin{align*}
\Re(\mu_0(\lambda,\delta))&=\displaystyle-\left(K_0+\frac{\tau_0^2}{3\nu K_0}\,\delta^2\,|\lambda|^{-2/3}\,\langle\beta(\cdot)\rangle\right)\,\Re(\lambda^{1/3})-\frac{\tau_0^2}{3\nu K_0}\,\frac{2\Re(\bar\lambda\,\lambda^{1/3})}{|\lambda|^{2/3}},\\
\Re(\mu_+(\lambda,\delta))&=\displaystyle-\left(K_0+\frac{\tau_0^2}{3\nu K_0}\,\delta^2\,|\lambda|^{-2/3}\,\langle\beta(\cdot)\rangle\right)\,\Re(\omega\lambda^{1/3})-\frac{\tau_0^2}{3\nu K_0}\,\frac{2\Re(\bar\lambda\,\omega\lambda^{1/3})}{|\lambda|^{2/3}},\\
\Re(\mu_-(\lambda,\delta))&=\displaystyle-\left(K_0+\frac{\tau_0^2}{3\nu K_0}\,\delta^2\,|\lambda|^{-2/3}\,\langle\beta(\cdot)\rangle\right)\,\Re(\omega^2\lambda^{1/3})-\frac{\tau_0^2}{3\nu K_0}\,\frac{2\Re(\bar\lambda\,\omega^2\lambda^{1/3})}{|\lambda|^{2/3}},
\end{align*}
so that, when $\Re(\lambda)\geq0$ and $\lambda$ and $\delta^2\,|\lambda|^{-2/3}$ are sufficiently small,
\begin{align*}
\Re(\mu_0(\lambda,\delta))&\leq-C|\lambda|^{1/3},\\
\Re(\mu_+(\lambda,\delta))&\geq
\begin{cases}C|\lambda|^{1/3},&\textrm{if}\quad\Im(\lambda)\geq0\\
C\left(\frac{\Re(\lambda)}{|\lambda|^{2/3}}+\frac{|\Im(\lambda)|}{|\lambda|^{1/3}}\right),&\textrm{if}\quad\Im(\lambda)\leq0
\end{cases},\\
\Re(\mu_-(\lambda,\delta))&\geq
\begin{cases}C|\lambda|^{1/3},&\textrm{if}\quad\Im(\lambda)\leq0\\
C\left(\frac{\Re(\lambda)}{|\lambda|^{2/3}}+\frac{|\Im(\lambda)|}{|\lambda|^{1/3}}\right),&\textrm{if}\quad\Im(\lambda)\geq0
\end{cases}.
\end{align*}
Using an energy estimate as in the proof of Lemma \ref{lemma1} above, it 
follows 
for $\epsilon,\delta_1>0$ sufficiently small and $R>0$ sufficiently large that
for $0<\delta<\delta_1$ 
system \eqref{evans4} 
has no bounded solutions provided that
\begin{equation}\label{evbd1}
\Re(\lambda)\geq0\,,\quad |\lambda|\leq\epsilon\,,\quad \Re(\lambda)\,|\lambda|^{1/3}+|\Im(\lambda)|\,|\lambda|^{2/3}\,\geq R\,\delta^4\,.
\end{equation}
In particular, this shows that as $\delta\to 0^+$ the unstable eigenvalues satisfy $|\lambda|=O(\delta^{12/5})$. 

Next, we refine the above bound by using spectral stability of the limiting cnoidal wave. To do so, we scale unknowns of the system \eqref{spec1} according to
\be\label{e:spectral-scaling}
\tau(\theta)\ =\ \delta^2\tfrac{\tau_0}{3}a\left(\frac{\tau_0^{5/4}\delta}{\nu^{1/2}}\,\theta\right)\,,\qquad
u(\theta)\ =\ \delta^2\tfrac{u_0}{6}b\left(\frac{\tau_0^{5/4}\delta}{\nu^{1/2}}\,\theta\right)\,,\qquad
\lambda\ =\ \frac{\delta^3}{4\tau_0^{1/4}\nu^{1/2}}\,\Lambda\,.
\ee
Our goal is to prove that the rescaled system obtained from \eqref{spec1} has no unstable eigenvalues $\Lambda$ with $|\Lambda|$ sufficiently large.
To this end, notice that by \eqref{evbd1} the possible unstable eigenvalues $\Lambda$ must satisfy the estimate
\begin{equation}\label{evbd2}
\Re(\Lambda)\geq0\,,\quad \Re(\Lambda)\,|\Lambda|^{1/3}\,+\,\delta\,|\Im(\Lambda)|\,|\Lambda|^{2/3}\,=\,O(1),
\end{equation}
so that, in particular, we already know that $\Lambda=O(\delta^{-3/5})$. Rewriting the eigenvalue system \eqref{spec1} in terms  of the unknown $V(Y)=(\delta^{-2}(b(Y)+\bar c\, a(Y)),a(Y),a'(Y))$ results in the system
\begin{equation}\label{evans5}
V'(Y)\ =\ \left(A_{KdV}(Y;\Lambda)+\Lambda\,\delta\,B+\delta\,C(Y)+R(Y;\Lambda,\delta)\right)\,V(Y)
\end{equation}
with\footnote{We don't need here the exact form of $C$ but we specify it for latter use.}
$$
A_{KdV}(\cdot;\Lambda)\ =\ \begin{pmatrix}
0& \Lambda & 0\\
0 & 0 &1\\
-1 &\sigma_0-T_0(\cdot)& 0
\end{pmatrix}\,,
$$
$$
B\ =\ \frac{1}{2\tau_0^{1/4}\nu^{1/2}}
\begin{pmatrix}
0 & 0 & 0\\
0 & 0 &0\\
0 & 1 & 0
\end{pmatrix}\,\qquad 
C(\cdot)\ =\ 
\frac{1}{2\tau_0^{1/4}\nu^{1/2}}
\begin{pmatrix}
0 & 0 & 0\\
0 & 0 &0\\
0 &T_0'(\cdot)-T_1(\cdot)&-1+T_0(\cdot)-\sigma_0
\end{pmatrix}\,,
$$
and
$$
R(\cdot;\Lambda,\delta)\ =\ 
\begin{pmatrix}
0 & 0 & 0\\
0 & 0 &0\\
O(\Lambda\,\delta^3)+O(\delta^2) & O(\Lambda\,\delta^3)+O(\delta^2) & O(\Lambda\,\delta^2)+O(\delta^2)
\end{pmatrix}\,.
$$

To make use of known results about KdV we rewrite the above spectral problem in a more standard way by introducing 
the unknown $W(\cdot)=(V_2(\cdot),V_3(\cdot),-V_1(\cdot)+(\sigma_0-T_0(\cdot))\,V_2(\cdot))^T$. This leads to
\begin{equation}\label{evans6}
W'(Y)\ =\ \left(H_0(Y;\Lambda)+\Lambda\,\delta\,H_1+\delta\,H_2(Y)+R(Y;\Lambda,\delta)\right)\,W(Y)
\end{equation}
with
\be\label{e:evans-H0}
H_0(\cdot;\Lambda)\ =\ \begin{pmatrix}
0& 1 & 0\\
0 & 0 &1\\
-\Lambda-T_0'(\cdot) &\sigma_0-T_0(\cdot)& 0
\end{pmatrix}\,,
\ee
\be\label{e:evans-H12}
H_1\ =\ \frac{1}{2\tau_0^{1/4}\nu^{1/2}}
\begin{pmatrix}
0 & 0 & 0\\
1 & 0 &0\\
0 &0 & 0
\end{pmatrix}\,,\qquad
H_2(\cdot)\ =\ \frac{1}{2\tau_0^{1/4}\nu^{1/2}}
\begin{pmatrix}
0 & 0 & 0\\
T_0'(\cdot)-T_1(\cdot) & -1+T_0(\cdot)-\sigma_0 &0\\
0 &0 & 0
\end{pmatrix}\,,
\ee
and
$$
R(\cdot;\Lambda,\delta)\ =\ 
\begin{pmatrix}
0 & 0 & 0\\
O(\Lambda\,\delta^3)+O(\delta^2)&O(\Lambda\,\delta^2)+O(\delta^2)&O(\Lambda\,\delta^3)+O(\delta^2)\\
0 & 0 &0
\end{pmatrix}\,.
$$
The leading order ``KdV" part may now be changed to a diagonal constant-coefficient matrix through 
an explicit periodic Floquet change of variable $P(\cdot;\Lambda)$ such that
$$
P(\cdot;\Lambda)\ =\ \begin{pmatrix}
1 & 0 & 0\\
0 & \Lambda^{1/3} &0\\
0 & 0 & \Lambda^{2/3}
\end{pmatrix}\quad\left(
\begin{pmatrix}
1 & 1 & 1\\
1 & \omega &\omega^2\\
1 & \omega^2 & \omega
\end{pmatrix}+O(\Lambda^{-2/3})\right),
$$
with inverse 
$$
\left(\frac13\begin{pmatrix}
1 & 1 & 1\\
1 & \omega^2 &\omega\\
1 & \omega & \omega^2
\end{pmatrix}+O(\Lambda^{-2/3})\right)\quad
\begin{pmatrix}
1 & 0 & 0\\
0 & \Lambda^{-1/3} &0\\
0 & 0 & \Lambda^{-2/3}
\end{pmatrix}\,.
$$
Indeed, using that $\Lambda=O(\delta^{-3/5})$, replacing $W$ with $Y(\cdot)=P(\cdot;\Lambda)^{-1}W(\cdot)$ leads to the system
\begin{equation}\label{evans7}
Y'\ =\ \left(D(\Lambda)+\Lambda^{2/3}\,\delta\,Q_1+R(\cdot;\Lambda,\delta)\right)\,Y
\end{equation}
with
$$
D(\Lambda)\ =\ \begin{pmatrix}
\mu_0(\Lambda)&0 & 0\\
0 & \mu_+(\Lambda) &0\\
0 &0&\mu_-(\Lambda)
\end{pmatrix}\,\qquad 
Q_1\ =\ \frac{1}{6\tau_0^{1/4}\nu^{1/2}}
\begin{pmatrix}
1 & 1 & 1\\
\omega^2 & \omega^2 &\omega^2\\
\omega & \omega & \omega
\end{pmatrix}\,,
$$
and $R(\cdot;\Lambda,\delta)\ =\ O(\delta)$,
where the $\mu_0$ and $\mu_\pm$ are smooth functions of $\Lambda$.  In particular, 
we know that when $\Re(\Lambda)\geq0$ we have
$$	
\Re(\mu_0(\Lambda))\geq0\,,\qquad\Re(\mu_+(\Lambda))\leq0\,,\qquad\Re(\mu_-(\Lambda))\leq0\,,
$$
and
$$
\begin{array}{rcl}
\mu_0(\Lambda)&=&\displaystyle \Lambda^{1/3}(1+O(\Lambda^{-2/3}))\\[1em]
\mu_+(\Lambda)&=&\displaystyle \Lambda^{1/3}(\omega+O(\Lambda^{-2/3})) \\[1em]
\mu_-(\Lambda)&=&\displaystyle \Lambda^{1/3}(\omega^2+O(\Lambda^{-2/3}))
\end{array}\,.
$$

Next, provided that $\Lambda^{1/3}\delta$ is sufficiently small, which is guaranteed for $\delta$ sufficiently small since $\Lambda=O(\delta^{-3/5})$, we can use a near-identity change of variables to diagonalize the system \eqref{evans7} to order $O(\Lambda^{2/3}\delta)$, resulting in the
system
$$
Z'(Y)\ =\ \left(\,D_1(\Lambda,\delta)\,
+\,O(\Lambda^{5/3}\,\delta^4)\,+\,O(\delta)\right)\ Z(Y),
$$
where $D_1(\Lambda,\delta)$ is a constant-coefficient diagonal matrix whose diagonal entries are
\begin{align*}
\mu_0(\Lambda,\delta)&=\displaystyle\mu_0(\Lambda)\,+\,\frac{1}{6\tau_0^{1/4}\nu^{1/2}} \Lambda^{2/3}\,\delta,\\
\mu_+(\Lambda,\delta)&=\displaystyle\mu_+(\Lambda)\,+\, \frac{\omega^2}{6\tau_0^{1/4}\nu^{1/2}} \Lambda^{2/3}\,\delta,\\
\mu_-(\Lambda,\delta)&=\displaystyle\mu_+(\Lambda)\,+\, \frac{\omega}{6\tau_0^{1/4}\nu^{1/2}} \Lambda^{2/3}\,\delta.
\end{align*}
%
Hence, when $\Re(\Lambda)\geq0$, $\Lambda$ is large, and $\delta\,|\Lambda|^{1/3}$ is small,
$$
\begin{array}{rcl}
\Re(\mu_0(\Lambda,\delta))&\geq&C|\Lambda|^{1/3}\\
\Re(\mu_+(\Lambda,\delta))&\leq&
-C\,\left(\Re(\Lambda)|\Lambda|^{-2/3}\,+\,\delta\,|\Lambda|^{2/3}\right)\\
\Re(\mu_-(\Lambda,\delta))&\leq&
-C\,\left(\Re(\Lambda)|\Lambda|^{-2/3}\,+\,\delta\,|\Lambda|^{2/3}\right),
\end{array}
$$
which, when combined with previous exclusions, is sufficient to prove by an energy argument similar to that in Lemma \ref{lemma1} above
that if $\delta_1\in(0,\delta_0)$ is sufficiently small and $R>0$ is sufficiently large there are no eigenvalues of \eqref{evans7} when $0<\delta<\delta_1$ and
$$
\Re(\Lambda)\geq0\,,\qquad \dfrac{\Re(\Lambda)}{\delta\,|\Lambda|^{2/3}}\,+\,|\Lambda|^{2/3}\,\geq R\,.
$$
In particular, it follows that any unstable eigenvalue $\lambda$ of the original spectral problem \eqref{spec1} must
satisfy the estimates $|\lambda|=O(\delta^3)$ and $\Re(\lambda)=O(\delta^4)$ as $\delta\to 0^+$, by $|\Lambda|\leq R^{3/2}$ and
$\Re(\Lambda)\leq R\delta |\Lambda|^{2/3}\leq R\delta$, together with the scaling
$\lambda = c\delta^3 \Lambda$, with $c>0$ a real constant.
\end{proof}


\subsection{Connection to the KdV-KS index: proof of Thm. \ref{mains}}\label{evans_expansion}
It follows from Proposition \ref{Prop1} that, in order to complete the proof of Theorem \ref{mains}, it remains to study the eigenvalues of \eqref{spec1}, supplemented
with the appropriate Bloch quasi-periodic boundary conditions, of the form $\lambda=\Lambda\delta^3$ with $\Lambda$ confined to a compact subset of $\CM$
and $0<\delta\ll 1$.  More precisely, using the rescaling \eqref{e:spectral-scaling} from the proof of Proposition \ref{Prop1} and
setting $(\alpha,\beta)=(\delta^{-2}(b+\bar c\, a),a)$, we must study the spectral problem
\be\label{e:rescaled-spectral}
\begin{aligned}
&\Lambda \beta-\alpha'\ =\ 0\\
&\tilde\delta\left(\beta+\tilde\tau \beta+\tilde c \beta-\alpha\right)'
\ =\ \tilde\tau \beta+\tilde c\beta-\alpha-\beta''+O(\tilde\delta^2)f(\alpha,\beta,\beta'),
\end{aligned}
\ee
for some smooth function $f$, 
supplemented with the boundary condition $(\alpha,\beta,\beta')(X)=e^{i\xi}(\alpha,\beta,\beta')(0)$ for some $\xi\in[-\pi/X,\pi/X)$. 
Here, $\tilde \delta\ =\ \tfrac12\tau_0^{-1/4}\nu^{-1/2} \delta$ is as in Theorem \ref{maine}, and the bounds in $O(\cdots)$ are uniform
as $\Lambda$ varies on compact subsets of $\CM$.

To study the above one-parameter family of eigenvalue problems, parametrized by the Bloch frequency~$\xi$, we can define a periodic Evans function, a complex analytic function whose zeros, for each fixed $\xi$, agree in location and algebraic multiplicity with the eigenvalues of the boundary value problem \eqref{e:rescaled-spectral}. 
To proceed, we first recall, from the proof of Proposition \ref{Prop1}, 
that the spectral problem
under scaling \eqref{e:spectral-scaling} may be
written as the dynamical system (Eq. \eqref{evans6})
$$
W'(Y)\ =\ \left(H_0(Y;\Lambda)+\Lambda\,\delta\,H_1+\delta\,H_2(Y)+O(\tilde\delta^2)\right)\,W(Y)
$$
for $W=(\beta,\beta',-\alpha+(\sigma_0-T_0)\,\beta)^T$, where $H_0$, $H_1$, $H_2$ are given by \eqref{e:evans-H0} and \eqref{e:evans-H12}. Introducing the new dependent
variables
$Z=(W_1,W_2,W_3+\tilde\delta((\Lambda-T_0'-T_1)W_1+(-1+T_0+\sigma_0)W_2))^T$, 
the differential system 
becomes
\be\label{e:rescaled-evans}
Z'\ =\ 
\left(\begin{pmatrix}
0& 1 & 0\\
0 & 0 &1\\
-\Lambda-T_0'&\sigma_0-T_0& 0
\end{pmatrix}
+\tilde \delta
\begin{pmatrix}
0& 0 & 0\\
0 & 0 &0\\
T_0''-T_1'&2T_0'-T_1+\Lambda&T_0-1+\sigma_0
\end{pmatrix}+O(\tilde\delta^2)\right)Z\,.
\ee
Let $\Phi(\cdot,\Lambda,\tilde\delta)$ denote the associated $3\times 3$ fundamental solution matrix\footnote{In particular, this guarantees that $\Phi(0,\Lambda,\tilde\delta)=$Id.} associated 
with
\eqref{e:rescaled-evans}. It is an easy consequence of 
the regularity with respect to parameters of the flow associated with the differential system \eqref{e:rescaled-evans} 
that $\Phi(\cdot,\Lambda,\tilde\delta)$ is analytic with respect to $\Lambda\in\CM$ and $\tilde\delta$. 
Moreover,
for any fixed $(\xi,\tilde\delta)$, eigenvalues $\Lambda$ of \eqref{e:rescaled-spectral} agree in location and algebraic multiplicity with roots of the Evans function 
\begin{equation}\label{def:ev}
E_{SV}(\Lambda,\xi,\tilde\delta):=\det\left(\Phi(X,\Lambda,\tilde\delta)-e^{i\xi X}{\rm Id}\right);
\end{equation}
see \cite{G} for details.
To complete the proof of Theorem \ref{mains}, we must study the roots of the function $E_{SV}(\cdot,\xi,\tilde\delta)$ on compact subsets of $\CM$ for all $\xi\in[-\pi/X,\pi/X)$ and $0<\tilde\delta\ll 1$.

In order to connect the stability properties of the small amplitude roll-waves 
described 
by Theorem~\ref{maine} to 
those
of the associated leading
order approximating KdV-KS waves given in Proposition \ref{p:kdvsolnexpand}, we also define an 
%
Evans function $E_{KdV-KS}(\Lambda,\xi,\delta)$ for the eigenvalue boundary value problem 
\be\label{e:spectral-KdV-KS}
\Lambda z+((T_\delta-\sigma_\delta)z)'+z'''+\delta\,(z''+z'''')\ =\ 0
\ee
with $(z,z',z'',z''')(X)=e^{i\xi\,X}(z,z',z'',z''')(0)$. 
The next result shows that the Evans function $E_{KdV-KS}$ is faithfully described, to leading order, by the St. Venant Evans function
$E_{SV}$ for $0<\tilde\delta\ll 1$.

\begin{proposition}\label{p:evansexpand}
Uniformly on compact sets of $\Lambda\in\CM$, the Evans function $E_{KdV-KS}$ can be expanded for $0<\delta\ll 1$ as
\begin{equation}\label{e:SV-to-KdV-KS}
E_{KdV-KS}(\Lambda,\xi,\tilde\delta)
\ =\ -e^{i\xi X}(1+O(\tilde\delta))\exp\left(\frac{X}{\tilde\delta}\right)
\left(E_{SV}(\Lambda,\xi,\tilde\delta)
+O(\tilde\delta^2(|\Lambda|^2+|\xi|^2)+O(\tilde\delta^3(|\Lambda|+|\xi|))\right) .
\end{equation}
\end{proposition}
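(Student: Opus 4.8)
The plan is to compare the two Evans functions by showing that the fourth-order KdV--KS profile ODE, after the singular-perturbation (Fenichel-type) reduction already implicit in the setup, collapses onto the three-dimensional slow system \eqref{e:rescaled-evans} governing $E_{SV}$, with an explicit scalar prefactor accounting for the fast, hyperbolic-in-$\tilde\delta$ direction. Concretely, I would first write the eigenvalue equation \eqref{e:spectral-KdV-KS} as a first-order $4\times4$ system for $(z,z',z'',z''')^T$ and observe that, after dividing the top-order term $\delta\, z''''$ appropriately, the system has a fast direction with characteristic rate $\sim 1/\tilde\delta$ (coming from the $\delta(\partial_Y^2+\partial_Y^4)$ term) and a slow three-dimensional block. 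The determinant defining $E_{KdV-KS}$ then factors, by the standard block-triangularization of singularly perturbed linear ODEs, into (fast Floquet multiplier)\,$\times$\,(slow $3\times3$ determinant); the fast multiplier contributes precisely the $-e^{i\xi X}(1+O(\tilde\delta))\exp(X/\tilde\delta)$ prefactor, and the slow determinant is $E_{SV}$ up to the claimed errors.

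The key steps, in order: (1) Put \eqref{e:spectral-KdV-KS} in first-order form and perform the Fenichel/slow-manifold splitting, producing a conjugating matrix $T(\,\cdot\,;\Lambda,\tilde\delta)$, analytic and periodic in $Y$, that block-diagonalizes the system into a scalar fast equation $\phi' = \tilde\delta^{-1}(\rho_0 + \tilde\delta\,\rho_1(Y) + O(\tilde\delta^2))\phi$ plus a slow $3\times3$ system $\Psi' = (H_0 + \tilde\delta H_1 + \tilde\delta H_2 + O(\tilde\delta^2))\Psi$ with exactly the matrices of \eqref{evans6}/\eqref{e:rescaled-evans}; this is where the identification of the slow reduced flow of KdV--KS with the St.~Venant flow must be made explicit, and it is essentially the computation already carried out in Section~\ref{s:existence} at the profile level, now repeated at the linearized level. (2) Compute the fast Floquet multiplier: $\exp\!\big(\int_0^X \tilde\delta^{-1}(\rho_0+\tilde\delta\rho_1+\cdots)\,dY\big) = e^{X\rho_0/\tilde\delta}\,e^{\int_0^X\rho_1}\,(1+O(\tilde\delta))$, and check $\rho_0 = 1$ (so that $e^{X/\tilde\delta}$ appears) and that $e^{\int_0^X \rho_1} = -e^{i\xi X}(1+O(\tilde\delta))$ matches after also accounting for the $e^{i\xi X}{\rm Id}$ shift in the periodic Evans determinant --- or more cleanly, factor $E_{KdV-KS} = (\mu_{\rm fast} - e^{i\xi X})\cdot\det(\Phi_{\rm slow} - e^{i\xi X}{\rm Id}) \cdot (1+O(\tilde\delta))$ and note $\mu_{\rm fast}-e^{i\xi X} = -e^{i\xi X}(1+O(\tilde\delta))e^{X/\tilde\delta}$ since $\mu_{\rm fast}$ is exponentially large. (3) Identify $\det(\Phi_{\rm slow}-e^{i\xi X}{\rm Id})$ with $E_{SV}(\Lambda,\xi,\tilde\delta)$ up to $O(\tilde\delta^2(|\Lambda|^2+|\xi|^2) + \tilde\delta^3(|\Lambda|+|\xi|))$ by Gronwall/variation-of-parameters estimates on the difference of fundamental matrices, using analyticity in $(\Lambda,\tilde\delta)$ and the polynomial-in-$(\Lambda,\xi)$ structure of the residual terms $R$ in \eqref{evans6} to get the stated orders; the powers of $\tilde\delta$ and $(|\Lambda|,|\xi|)$ should be tracked by noting which entries of $R$ in \eqref{evans6} are $O(\Lambda\tilde\delta^2)$, $O(\tilde\delta^2)$, etc., and that the $\xi$-dependence enters only through the boundary condition, i.e.\ through replacing $\lambda$-independent pieces by Bloch conjugation $e^{-i\xi\cdot}(\,\cdot\,)e^{i\xi\cdot}$.

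The main obstacle I expect is Step~(1): constructing the block-diagonalizing transformation $T$ uniformly in $\tilde\delta$ on a full period while keeping it $Y$-periodic (so that the Floquet/Evans determinant factors cleanly) and controlling its inverse --- this is the genuine singular-perturbation content, since the fast direction has rate $O(1/\tilde\delta)$ and one must show the slow manifold persists with the right asymptotic expansion and that the off-diagonal coupling is pushed to $O(\tilde\delta^2)$ or smaller after finitely many near-identity conjugations, exactly paralleling the iterative diagonalization already performed in the proof of Proposition~\ref{Prop1}. Once $T$ is in hand the remaining steps are Gronwall estimates and bookkeeping of orders in $(\tilde\delta,\Lambda,\xi)$, which are routine. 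I would also take care, in Step~(2), that the $O(\tilde\delta)$ relative error in the prefactor is genuinely multiplicative and does not interfere with the additive $O(\tilde\delta^2(\cdots))$ error on $E_{SV}$ --- this is why the statement is phrased with the prefactor $(1+O(\tilde\delta))$ outside and the additive corrections inside, and the proof should respect that grouping throughout.
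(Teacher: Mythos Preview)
Your Steps (1) and (2) are essentially the paper's starting point: the paper also block-decouples the fourth-order KdV--KS spectral system into a scalar fast equation $w' = -\tilde\delta^{-1}w + O(\tilde\delta)$ and a slow $3\times 3$ block whose leading two orders in $\tilde\delta$ coincide with those of the St.~Venant system \eqref{e:rescaled-evans}. (A minor point: the fast root is $-1/\tilde\delta$, so $\mu_{\rm fast}$ is exponentially \emph{small}, not large; the factor $e^{X/\tilde\delta}$ in the statement does not come from $\mu_{\rm fast}-e^{i\xi X}$ alone but from the full row/column normalizations carried out in \cite{JNRZ1}, which you should not try to reinvent from scratch.)

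The real gap is Step (3). A straight Gronwall comparison of the two slow fundamental matrices gives only that the Evans determinants differ by $O(\tilde\delta^2)$, \emph{uniformly} in $(\Lambda,\xi)$ --- the $O(\tilde\delta^2)$ residuals in \eqref{evans6} and in the KdV--KS slow block contain terms that do \emph{not} vanish at $\Lambda=0$, so tracking ``polynomial-in-$(\Lambda,\xi)$ structure of $R$'' by itself cannot produce the homogeneous bound $O(\tilde\delta^2(|\Lambda|^2+|\xi|^2))+O(\tilde\delta^3(|\Lambda|+|\xi|))$. This refinement is essential: near $(\Lambda,\xi)=(0,0)$ both Evans functions are themselves $O(|\Lambda|^2+|\xi|^2)$ (double zero from translation plus conservation law), so a crude $O(\tilde\delta^2)$ error would swamp the signal and the subsequent analysis in Section~\ref{evans_expansion} would collapse.

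The paper obtains the homogeneous bound by a different mechanism that you do not invoke: it uses (i) the conservation-law identity \eqref{B} satisfied by \emph{both} systems and (ii) the profile derivative as an exact kernel element at $\Lambda=0$, to perform Whitham-type row and column operations on each determinant separately (\`a la Serre \cite{Se}). After these manipulations both Evans functions take the form $(1+O(\tilde\delta))\det(N+\text{correction})$ with \emph{literally the same} structured $3\times 3$ matrix $N$ --- not merely the same orders of magnitude --- whose entries already carry the $(\Lambda,\xi)$-homogeneity. The $O(\tilde\delta^2)$ discrepancies are then confined to the correction matrices $\tilde N$, $\tilde M$, and the claimed error is read off by expanding the $4\times 4$ determinant $\det(M+\tilde M)$ along its last row and column. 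Your proposal needs either this structural step or an independent argument that both slow determinants retain \emph{exact} double zeros at $(\Lambda,\xi)=(0,0)$ for all $\tilde\delta>0$ (which in turn requires an exact, not finite-order, block diagonalization); as written, Gronwall alone does not close.
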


\begin{proof}
A similar expansion has been obtained for $E_{KdV-KS}$ in \cite[Propositions~3.7 \& 4.1]{JNRZ1}. Our proof parallels the arguments there but in a slightly more precise way in order to equate leading order terms with those of $E_{SV}$ .

The starting point is, as in the proof of \cite[Propositions~3.7]{JNRZ1}, that the spectral problem \eqref{e:spectral-KdV-KS} 
may be
equivalently written, through a series of
variables transformations with Jacobian of size $1+\mathcal{O}(\delta)$, as 
$$
\begin{array}{rcl}
Z'&=&\displaystyle 
\left(\begin{pmatrix}
0& 1 & 0\\
0 & 0 &1\\
-\Lambda-T_0'&\sigma_0-T_0& 0
\end{pmatrix}
+\delta
\begin{pmatrix}
0& 0 & 0\\
0 & 0 &0\\
T_0''-T_1'&2T_0'-T_1+\Lambda&T_0-1+\sigma_0
\end{pmatrix}\right)Z\ +\ O(\delta^2)g_1(Z,w)\\[2em]
w'&=&-\frac{1}{\delta}w\ +\ O(\delta)g_2(Z,w)\,
\end{array}
$$
for some smooth functions $g_1,g_2$; see  \cite[Propositions~3.7]{JNRZ1} for details.  
To obtain the expected homogeneity in $(\Lambda,\xi)$ we also use that the above system supports a conservation law
\be\label{B}
\begin{array}{rcl}
\Lambda Z_1&=&\Big[\left(T_0-\sigma_0-1+T_0'+\Lambda+\delta(T_1+T_1'+T_0''-T_0+\sigma_0+1)\right)Z_1\\[1em]
&&+\left(T_0-\sigma_0-1+\delta(T_1+2T_0'+\Lambda))Z_2\right)+\left(-1-\delta(T_0-\sigma_0-1)\right)Z_3\ + O(\delta^2)[Z,w]\Big]'
\end{array}
\ee
and that the derivative $v_\delta'$ of the profile built in Proposition~\ref{p:kdvsolnexpand} provides a special solution of the system when $\Lambda=0$. Elaborating on this, manipulations on lines and columns of the determinant usually performed to validate at the spectral level averaged
(or ``Whitham'') 
equations \cite{Se} then yield that the Evans function $E_{KdV-KS}(\Lambda,\xi,\tilde\delta)$ may be written as
\begin{equation}\label{evans_final}
E_{KdV-KS}(\Lambda,\xi,\tilde\delta)=
-e^{i\xi X}(1+O(\tilde\delta))\exp\left(\frac{X}{\tilde\delta}\right)\det\left(M(\Lambda,\xi,\tilde\delta)+\widetilde{M}(\Lambda,\xi,\tilde\delta)\right),
\end{equation}
where
$$
M(\Lambda,\xi,\tilde\delta)\ =\ 
\begin{pmatrix}
N&\begin{array}{c}0\\0\\0\end{array}\\
\begin{array}{ccc}0&0&0\end{array}&1
\end{pmatrix},~~~
N(\Lambda,\xi,\tilde\delta)\ =\ 
\begin{pmatrix}
O(|\Lambda|+|\xi|)&
O(1)&
O(1)\\[1ex]
O(|\Lambda|+|\xi|)&
O(1)&
O(1)\\[1ex]
O(|\Lambda|^2+|\Lambda||\xi|)&
O(|\Lambda|+|\xi|)&
O(|\Lambda|+|\xi|)
\end{pmatrix}\, ,
$$
and
$$
\widetilde M(\Lambda,\xi,\tilde\delta)\ =\ \begin{pmatrix}
O(\tilde\delta^2(|\Lambda|+|\xi|))&
O(\tilde\delta^2)&
O(\tilde\delta^2)&
O(\tilde\delta^2)\\[1ex]
O(\tilde\delta^2(|\Lambda|+|\xi|))&
O(\tilde\delta^2)&
O(\tilde\delta^2)&
O(\tilde\delta^2)\\[1ex]
O(\tilde\delta^2|\Lambda|(|\Lambda|+|\xi|))&
O(\tilde\delta^2(|\Lambda|+|\xi|))&
O(\tilde\delta^2(|\Lambda|+|\xi|))&
O(\tilde\delta^2)\\[1ex]
O(\tilde\delta|\Lambda|)&
O(\tilde\delta)&
O(\tilde\delta)&
O(\tilde\delta)
\end{pmatrix}.
$$
Again, see \cite[Propositions~3.7]{JNRZ1} for details.  

Now, returning to the St. Venant spectral problem \eqref{e:rescaled-evans}, we notice that
\eqref{e:rescaled-evans} 
also
%
supports a conservation law in form \eqref{B}
%
and, furthermore, the space derivative of the traveling-wave profile provides a special solution when $\Lambda=0$.  
Using calculations completely analogous to those described above for the KdV-KS spectral problem \eqref{e:spectral-KdV-KS},
it follows that we can write
\[
E_{SV}(\Lambda,\xi,\tilde\delta)=\left(1+\mathcal{O}(\delta)\right)\det\left(N(\Lambda,\xi,\tilde\delta)+\tilde{N}(\Lambda,\xi,\tilde\delta)\right),
\]
where $N$ is \emph{exactly} the same matrix as above\footnote{Not just in order of magnitude, but by component-wise identification of the coefficients.} and
%
$$
\tilde N(\Lambda,\xi,\tilde\delta)\ =\ \begin{pmatrix}
O(\tilde\delta^2(|\Lambda|+|\xi|))&
O(\tilde\delta^2)&
O(\tilde\delta^2)\\[1ex]
O(\tilde\delta^2(|\Lambda|+|\xi|))&
O(\tilde\delta^2)&
O(\tilde\delta^2)\\[1ex]
O(\tilde\delta^2|\Lambda|(|\Lambda|+|\xi|))&
O(\tilde\delta^2(|\Lambda|+|\xi|))&
O(\tilde\delta^2(|\Lambda|+|\xi|))
\end{pmatrix}\,.
$$
The expansion \eqref{e:SV-to-KdV-KS} follows conveniently by expanding the determinant $\det(M+\tilde M)$ in \eqref{evans_final}.
\end{proof}

The proof of Theorem \ref{mains} now follows immediately from the proof of Proposition \ref{p:kdvstab} in \cite{JNRZ1} for the stability of KdV-KS waves in the KdV limit $\delta\to 0^+$. Indeed, the proof in \cite{JNRZ1} followed by studying the renormalized KdV-KS Evans function
\[
\overline{E}_{KdV-KS}(\Lambda,\xi,\tilde\delta)
\ =\ e^{-i\xi X}(1+O(\tilde\delta))\exp\left(-\frac{X}{\tilde\delta}\right)E_{KdV-KS}(\Lambda,\xi,\tilde\delta)\,,
\]
and 
using
the asymptotic description of $\overline{E}_{KdV-KS}(\Lambda,\xi,\tilde\delta)$ up to 
$O(\tilde\delta^2(|\Lambda|^2+|\xi|^2))+O(\tilde\delta^3(|\Lambda|+|\xi|))$. Since Proposition \ref{p:evansexpand} implies that 
\[
E_{SV}(\Lambda,\xi,\tilde \delta)=\overline{E}_{KdV-KS}(\Lambda,\xi,\tilde\delta)+O(\tilde\delta^2(|\Lambda|^2+|\xi|^2))+O(\tilde\delta^3(|\Lambda|+|\xi|))
\]
it follows that the same arguments can be applied \emph{without modification} to the Evans function $E_{SV}(\Lambda,\xi,\tilde \delta)$.
For completeness, we briefly sketch the details.

For $k\in(0,1)$ such that condition (A) holds, all the non-zero Bloch eigenvalues of the St. Venant linearized operator admit a smooth expansion in $\tilde\delta$ for $0<\tilde\delta\ll 1$.  In particular, to each pair $(\xi,\Lambda_0)$ such that $\Lambda_0\in\RM i$ is a non-zero eigenvalue of the KdV Bloch operator $\mathcal{L}_\xi[T_0]$ there is a unique root of $E_{SV}(\Lambda,\xi,\tilde\delta)$ for $0<\tilde\delta\ll 1$ that can be expanded in $\tilde\delta$ as
\[
\Lambda(\tilde\delta;\xi,\Lambda_0)=\Lambda_0+\tilde\delta\lambda_1(\xi,\Lambda_0)+\mathcal{O}(\tilde\delta^2)
\]
where $\lambda_1(\cdot,\cdot)$ is the function already involved in definition \eqref{index} ; see \cite[Corollary 3.8]{JNRZ1}. 
As this expansion is uniform in $(\xi,\Lambda_0)$ when $(\xi,\Lambda_0)$ varies in a compact set that does not contain $(0,0)$, 
it follows that for any neighborhood $\mathcal{U}$ of $0$ in the spectral plane the condition $k\in\mathcal{P}$ provides when $\tilde\delta$ is sufficiently small a negative upper bound on the real part of the spectrum in $\C\setminus\{0\}$; see \cite[Corollary 3.10]{JNRZ1} for details.  

On the other hand, when $\xi=0$ the origin is an eigenvalue of the KdV Bloch operator $\mathcal{L}_0[T_0]$ of algebraic multiplicity three and geometric multiplicity two.  To unfold the degeneracy for $0<\tilde\delta\ll 1$ we directly apply the arguments in \cite[Section 4]{JNRZ1}. By studying the three asymptotic regions $0<\tilde\delta\lesssim|\xi|$ \cite[Lemma 4.4]{JNRZ1}, $\tilde\delta\sim|\xi|$ \cite[Lemma 4.6]{JNRZ1}, and 
$0\leq|\xi|\lesssim\tilde\delta$
\cite[Lemma 4.7]{JNRZ1} separately it follows that the condition $k\in\mathcal{P}$ implies that a set of ``subcharacteristic conditions" hold which, in turn, are shown to imply that all the roots of 
$E_{SV}(\Lambda,\xi,\tilde\delta)$, with 
$0<|(\Lambda,\xi)|\ll 1$ and $0<\tilde\delta\ll 1$
all have negative real parts.  More precisely, under these conditions it is shown that the (diffusive) spectral stability conditions (D1)-(D3) and the non-degeneracy hypothesis (H1) are satisfied
when $\delta$ is sufficiently small.
Finally, we note as in the Introduction
that, by the description of profiles in Theorem \ref{maine}, slope condition
\eqref{e:Eslope} (or, equivalently, \eqref{e:Lslope}) is always satisfied.
This completes the proof of Theorem \ref{mains}.

\section{Stability of large amplitude roll-waves}\label{s:infty}

In the previous section, we rigorously justified the KdV-KS equation \eqref{kdv-ks} as a correct description for the weak hydrodynamic instability in inclined thin film flow.  In particular, in the weakly nonlinear regime $F\to 2^+$ we saw that the KdV-KS equation accurately predicts the  stability of the associated small amplitude roll-wave solutions of the shallow-water equations \eqref{swe}.  We now complement this study by continuing our analysis into the large amplitude regime, far from the weakly-unstable limit $F\to 2^+$, performing a systematic stability analysis for roll-waves with Froude number on the entire range of existence $F>2$, including the distinguished limit $F\to\infty$.  We begin by considering the limit $F\to\infty$, identifying a one-parameter family of limiting systems approachable by various scaling choices in the shallow-water equations. We will then numerically study the influence of intermediate Froude numbers $2<F<\infty$ on the range of stability of periodic waves.

\subsection{Scaling as $F\to\infty$}\label{s:per}

In our $F\to\infty$ studies, we investigate the stability of periodic traveling wave solutions of some asymptotic systems obtained from \eqref{swl} via scaling arguments.  To begin, performing the change of variables $(x,t)\mapsto(\tilde x,\tilde t)=(k(x-ct),t)$ in \eqref{swl} and erasing tildes yields the equivalent system
\begin{equation*}
\displaystyle   
\d_t \tau-k\,c\,\d_x\tau-k\d_x u=0,\quad \d_t u-k\,c\,\d_xu+k\,\d_x\left(\frac{\tau^{-2}}{2F^2}\right)=1-\tau\,u^2+\nu k^2\d_x(\tau^{-2}\partial_x u).
\end{equation*}
To prepare for sending $F\to\infty$ above, we first scale the dependent quantities independently via 
$\tau=F^{\atau}\,a$, $u=F^{\au}\,b$, $k=F^{\ak}\,k_0$ and $c=F^{\ac}\,c_0$.  This transforms the above shallow-water system to
\begin{equation}\label{swl:rescaleF->infty}
\begin{array}{rcl}
\displaystyle   
F^{\atau}\d_ta-\,k_0c_0\,F^{\atau+\ak+\ac}\,\d_xa&-&k_0\,F^{\au+\ak}\d_x b\ =\ 0\,,\\\displaystyle 
F^{\au}\d_tb-k_0c_0\,F^{\au+\ak+\ac}\,\d_xb&+&k_0\,F^{\ak-2-2\atau}\,\d_x\left(\frac{a^{-2}}{2}\right)\\\displaystyle
&=&1-F^{\atau+2\au}a\,b^2+\nu k_0^2\,F^{\au+2\ak-2\atau}\,\d_x(a^{-2}\d_x b)\,.
\end{array}
\end{equation}
Seeking stationary $1$-periodic solutions of \eqref{swl:profileF->infty} leads to an ODE system
\ba\nonumber
\displaystyle   
-\,c_0\,F^{\atau+\ac}\,a'-F^{\au}b'\ =\ 0,&&\\\displaystyle 
k_0c_0^2\,F^{\atau+\ak+2\ac}\,a'+k_0\,F^{\ak-2-2\atau}\,\left(\frac{a^{-2}}{2}\right)'&=&1-F^{\atau+2\au}a\,b^2-\nu k_0^2\,c_0\,F^{\ac+2\ak-\atau}\,(a^{-2}a')'\, ,
\ea
governing the traveling wave solutions of \eqref{swl} in the appropriate moving frame.  Note that, 
by its expression as $\int_0^X \bar \tau(x)dx$, the period in the physical Eulerian variables scales as $F^{\atau-\ak}$ under the above transformations. To balance the terms of the \qut{elliptic-constant} right-hand side above, we choose now to impose $\atau+2\au=0$ and $\ac+2\ak-\atau=0$. This effectively  leaves two free parameters, say $\alpha=\atau$ and $\beta=\ak$, with $\au=-\alpha/2$ and $\ac=\alpha-2\beta$. Under this choice, the above profile system now reads
\ba\label{swl:profileF->infty}
\displaystyle   
-\,c_0\,F^{\frac52\alpha-2\beta}\,a'-b'\ =\ 0,&&\\\displaystyle 
k_0c_0^2\,F^{3\alpha-3\beta}\,a'+k_0\,F^{-2-2\alpha+\beta}\,\left(\frac{a^{-2}}{2}\right)'&=&1-a\,b^2-\nu k_0^2\,c_0\,(a^{-2}a')'\,.
\ea

The first equation may be integrated to yield $b=q_0-F^{\frac52\alpha-2\beta}\,c_0\,a$ where $q_0$ is a constant of integration.  Substituting this into the second equation above, the rescaled profile system is reduced to the scalar equation
\be
k_0c_0^2\,F^{3\alpha-3\beta}\,a'+k_0\,F^{-2-2\alpha+\beta}\,\left(\frac{a^{-2}}{2}\right)'
\ =\ 1-a\,(q_0-F^{\frac52\alpha-2\beta}\,c_0\,a)^2-\nu k_0^2\,c_0\,(a^{-2} a')'\,.
\ee
In order to ensure that the ``elliptic part" $(a^{-2}a')'$ is not asymptotically negligible as $F\to\infty$, we restrict ourselves to the parameter regimes where $\tfrac54\alpha\leq\beta\leq2\alpha+2$, $\beta\geq\alpha$.  In particular, this latter choice requires $\alpha\geq -2$. With the above choices, the full rescaled shallow-water equations \eqref{swl:rescaleF->infty} now read as
\begin{equation}\label{swl:rescaleF->infty2}
\begin{array}{rcl}
\displaystyle   
F^{\alpha}\d_ta-\,k_0c_0\,F^{2\alpha-\beta}\,\d_xa&-&k_0\,F^{\beta-\alpha/2}\d_x b\ =\ 0\,,\\\displaystyle 
F^{-\alpha/2}\d_tb-k_0c_0\,F^{\alpha/2-\beta}\,\d_xb&+&k_0\,F^{-2-2\alpha+\beta}\,\d_x\left(\frac{a^{-2}}{2}\right)\\\displaystyle 
&=&1-a\,b^2+\nu k_0^2\,F^{2\beta-5\alpha/2}\,\d_x(a^{-2}\d_x b)\,,
\end{array}
\end{equation}
which is a two-parameter family of systems, parametrized by $\alpha\geq -2$ and $\tfrac54\alpha\leq\beta\leq2\alpha+2$.

Finally, when sending $F\to\infty$ in \eqref{swl:rescaleF->infty2}, we balance the first-order terms of the reduced profile equation \eqref{swl:profileF->infty} by requiring $3\alpha-3\beta=-2-2\alpha+\beta$, i.e. that $\beta=\frac{1}{2}+5\alpha/4$, effectively reducing our scalings to a one parameter family indexed by $\alpha\geq -2$.  In particular, under this choice the full one-parameter family of rescaled shallow-water systems reads as
\begin{equation}\label{swl:rescaleF->inftyFinal}\begin{array}{rcl}
\displaystyle   
F^{\alpha}\d_ta-\,k_0c_0\,F^{3\alpha/4-1/2}\,\d_xa&-&k_0\,F^{1/2+3\alpha/4}\d_x b\ =\ 0\,,\\\displaystyle 
F^{-\alpha/2}\d_tb-k_0c_0\,F^{-3\alpha/4-1/2}\,\d_xb&+&k_0\,F^{-3/2-3\alpha/4}\,\d_x\left(\frac{a^{-2}}{2}\right)\\\displaystyle 
&=&1-a\,b^2+\nu k_0^2\,F\,\d_x(a^{-2}\d_x b),
\end{array}
\end{equation}
and the associated rescaled profile system is equivalent to  $b=q_0-F^{-1}\,c_0\,a$, $q_0$ constant and
\begin{equation}\label{profileF->inftyFinal}
F^{-3/2-3\alpha/4}\,\left(k_0c_0^2\,a'+k_0\,\left(\frac{a^{-2}}{2}\right)'\right)
\ =\ 1-a\,(q_0-F^{-1}\,c_0\,a)^2-\nu k_0^2\,c_0\,(a^{-2} a')'\,.
\end{equation}
By the discussion above Eq. \eqref{swl:profileF->infty},
the Eulerian period of the periodic profiles satisfying \eqref{profileF->inftyFinal} scale as $F^{-\alpha/4-1/2}$. Notice that \eqref{profileF->inftyFinal} is equivalent to the profile equation \eqref{rprof} claimed in the introduction.  

As mentioned in the introduction, taking $F\to\infty$ in \eqref{profileF->inftyFinal} produces different limiting profile equations depending on whether $\alpha=-2$ or if $\alpha>-2$.  Next, we discuss both limiting profile equations and the spectral problems governing the stability of the profiles.

\subsubsection{Case $\alpha=-2$}\label{s:scaling}

Taking $\alpha=-2$ in the above discussion corresponds to rescaling \eqref{swl:profileF->infty} via
$$
\tau=F^{-2}\,a,\quad u=F\,b,\quad k=k_0\,F^{-2},\quad c=c_0\,F^2
$$
Note that in this case the Eulerian period of the profile is held constant as $F\to\infty$.  
With this choice, the profile equation \eqref{profileF->inftyFinal} reads as
\be
\left(k_0c_0^2\,a'+k_0\,\left(\frac{a^{-2}}{2}\right)'\right)
\ =\ 1-a\,(q_0-F^{-1}\,c_0\,a)^2-\nu k_0^2\,c_0\,(a^{-2} a')',
\ee
and $b=q_0-F^{-1}\,c_0\,a$, $q_0$ constant.  Linearizing \eqref{swl:rescaleF->inftyFinal} with $\alpha=-2$ 
about such an $X_0$-periodic profile $(\bar{a},\bar{b})$ yields the associated rescaled spectral problem
\begin{equation}\label{eval2}
\begin{aligned}
F^{-2}\lambda\, a-\,k_0c_0\,F^{-2}\, a'&-k_0\,F^{-1} b '\ =\ 0 \\
F\lambda\, b-k_0c_0\,F\, b'-&3k_0\,\left(\bar a^{-3}\, a\right)'\\
&=- a\,\bar b^2-2\bar a\bar  b+\nu k_0^2\,F\,(-2\bar a \bar b' \,a+\bar a^{-2} b')'\,
\end{aligned}
\end{equation}
to be considered for 
$(a, b)$ satisfying suitable Bloch boundary conditions. 
Sending $F\to\infty$ above, it follows that the profile equation is a regular perturbation of the limiting system 
\be\label{profile_crit}
\left(k_0c_0^2\,a'+k_0\,\left(\frac{a^{-2}}{2}\right)'\right)
\ =\ 1-a\,q_0^2-\nu k_0^2\,c_0\,(a^{-2}a')',
\ee
while the above spectral problem is a regular perturbation of 
\ba\label{eval}
\lambda  a - k_0 c_0  a' -k_0\check b'&=0,\\
\lambda \check b -k_0 c_0 \check b'- k_0 (\bar a_\infty^{-3}  a)'&=\
- a q_0^2 + k_0^2\nu (\bar a_\infty^{-2}\check b'+ 2c_0 \bar a_\infty^{-3} \bar a_\infty' \,a)',
\ea
where $(\bar a_\infty,\bar b_\infty)$, necessarily solutions of \eqref{profile_crit}, denote the limiting profiles of  $(\bar a,\bar b)$ as  $F\to\infty$, and $\check b =F\tilde b$.  The limiting profile equation \eqref{profile_crit} is numerically seen to admit periodic orbits existing in a two parameter family, parametrized by the period $X_0$ and the discharge rate $q_0$. The stability of these profiles may then be investigated by means of the spectral problem
\eqref{eval}: this is discussed in Section \ref{s:numinf} below.

\subsubsection{Case $\alpha>-2$}\label{s:size}

When $\alpha>-2$, it is readily seen that the profile equation \eqref{profileF->inftyFinal} is a regular perturbation
of the ODE
\begin{equation}\label{profile-hamorig}
0\ =\ 1-q_0^2 a-\nu k_0^2\,c_0\,(a^{-2} a')'\,,
\end{equation}
which is Hamiltonian in the unknown $1/a$.  Indeed, denoting $h:=q_0^{-2} a^{-1}$ 
and rescaling space via $x=s/\sqrt{\nu k_0c_0q_0^2}$, the above ODE reads as
\begin{equation}\label{profile-ham}
0=1-h^{-1}+h'',
\end{equation}
where ${}'$ denotes differentiation with respect to $s$.
This is clearly seen to be Hamiltonian and, upon integrating, is equivalent to
\begin{equation}\label{profile-hamquad}
\mu=h-\ln(h)+\frac{1}{2}(h')^2,
\end{equation}
where $\mu$ is a constant of integration.  Elementary phase plane analysis shows that \eqref{profile-ham} admits a one-parameter family of periodic orbits parametrized by the constant $\mu$.  Indeed, for each $\mu>1$ equation \eqref{profile-hamquad} admits a unique (up to spatial translations) periodic solution $h_\mu$, whose period we denote $X_\mu$.  Returning to the original variables then, we see that, choosing $k_0$ to satisfy $\nu k_0^2c_0q_0^2X_\mu^2=1$, the profile equation \eqref{profile-hamorig} admits a three parameter family of periodic orbits with unit period parametrized by $\mu$, $c_0$, and $q_0$.  Clearly, a necessary condition for such a $1$-periodic orbit of \eqref{profile-hamorig} to persist as a solution of \eqref{profileF->inftyFinal} for $F\gg 1$ is that the ($\alpha$-independent) orthogonality condition
$$
0\ =\ \int_0^1\quad\left(\frac1a\right)'\times\left(k_0c_0^2\,a'+k_0\,\left(\frac{a^{-2}}{2}\right)'\right)dx
$$
be satisfied. Note that this yields a selection principle for the wavespeed via
\[
c_0^2\ =\ -\dfrac12\,\dfrac{\displaystyle\int_0^1(a^{-1})'\times(a^{-2})'dx}{\displaystyle\int_0^1(a^{-1})'\times a'dx}
\ =\ \dfrac{\displaystyle\int_0^1a^{-5}\times(a')^2dx}{\displaystyle\int_0^1a^{-2}\times(a')^2dx}
\ =\ \dfrac{\displaystyle\int_0^1a^{-1}\times((a^{-1})')^2dx}{\displaystyle\int_0^1((a^{-1})')^2dx}\,.
\]
Generically zeros of the above selection function are simple and, in this case, as  
in Section~\ref{s:existence} it follows from elementary bifurcation analysis 
that the Hamiltonian profile equation \eqref{profile-hamorig} admits a two-parameter family of periodic orbits, parametrized by $\mu$ and $q_0$ that persist as periodic orbits of \eqref{profileF->inftyFinal} for $F\gg 1$. In particular, note that for $\alpha>-2$, the specific value of $\alpha$ does not enter into either the limiting profile equation nor the selection principle.

Finally, taking $F\to\infty$ in \eqref{swl:rescaleF->inftyFinal}, we see that the spectral stability of the $1$-periodic traveling wave solutions $(\bar{a},q_0)$ of \eqref{swl:rescaleF->inftyFinal} constructed above are determined via the spectral problem
$$\begin{array}{rcl}
\displaystyle   
\Lambda\, a&-&k_0\,\check b '\ =\ 0\,,\\\displaystyle 
0&=&\displaystyle 
- a\,q_0^2+\nu k_0^2\,(2c_0\bar a^{-3}\bar a'\, a+\bar a^{-2}\check b')'\,
\end{array}
$$
where here $(a,b)$ denotes perturbation, $\Lambda=F^{1/2}\lambda$, and $\check b =F b$. The former system may also be written as
$$\begin{array}{rcl}
\displaystyle   
\Lambda\, a&-&k_0\,\check b '\ =\ 0\,,\\\displaystyle 
0&=&\displaystyle 
- a\,q_0^2+\Lambda \nu k_0^2c_0 (\bar a^{-2}\, a)'- \nu k_0^2c_0\,(\bar a^{-2}\, a)''\,.
\end{array}
$$
Therefore, in this case we investigate the spectral problem
\be\label{eig_inf2}
0\ =\  h_\mu^{-2}\,\check a\,-\,\check\Lambda\ \check a'\,+\,\check a''\,.
\ee
for $\check\Lambda=(X_\mu)^{-1}\,\Lambda$ and
$\check a(\cdot)\ =\ q_0^{-2}\left(\bar a^{-2}\  a\right) \left((X_\mu)^{-1}\,\cdot\,\right)\,$
where $h_\mu$ and $X_\mu$ are associated to $\bar a$ as described above.  As above, we point out that for $\alpha>-2$,  the spectral problem governing the spectral stability of the limiting $F=\infty$ profiles is independent of the value of $\alpha$.  As a consequence, spectral instability of the limiting periodic traveling wave solutions constructed above implies spectral instability of the ($\alpha$-dependent) large-$F$ profiles of the system \eqref{swl:rescaleF->inftyFinal} for any $\alpha>-2$.

\subsection{Numerical investigation as $F\to\infty$}\label{s:numinf}
With the above preparations, we report our numerical results concerning the existence and spectral stability of the profiles 
introduced 
in the previous section for the limiting systems in the $F\to\infty$ limit.  

In the case $\alpha>-2$, elementary phase plane analysis indicates that for each $h_-\in(0,1)$ there exists a unique (up to translations) periodic solution
with $h(0)=h_-$.  These profiles were numerically computed for 1000 equally spaced values of $h_-$ in $[0.05,0.95]$ using the Matlab functions
bvp5c and bvp6c, with absolute and relative error tolerances both set to $10^{-8}$ in the bvp solver.  
To this end, we utilized a bisection method to approximate the value $h_+>h_-$ such that $h_+-\log(h_+)=\mu$ and then
approximated the corresponding period by computing $\sqrt{2}\int_{h_--10^{-13}}^{h_++10^{-13}}\left(\mu-x+\log(x)\right)^{-1/2}dx$.

As a first attempt to study the $L^2(\RM)$-spectrum associated to \eqref{eig_inf2}, we utilized a Galerkin truncation method
known as Hill's method.  For each $\xi\in[-\pi/X_\mu,\pi/X_\mu)$, Hill's method proceeds by expanding 
both the unknown $\check{a}$ as well as the background wave $h_\mu$ in the associated
Bloch eigenvalue problem as a Fourier series,  and  then truncating all expansions at some finite order to 
reduce the problem to finding, for each $\xi\in[-\pi/X_\mu,\pi/X_\mu)$ the eigenvalues of a finite-dimensional matrix; see Appendix \ref{s:compmethod} for more details.
For each of the profiles numerically constructed above, unstable spectra were present; see Figure \ref{comparison}(d) for an example.  In addition, we verified the existence
of unstable spectra of \eqref{eig_inf2} by numerically computing winding number for the associated periodic Evans function on a closed contour in the open right half complex
plane verifying that the winding number is indeed greater than zero for some Bloch frequency $\xi$.  Employing Theorem \ref{t:limstab}, his study suggests
that all periodic traveling wave solutions of \eqref{swl:profileF->infty} are \emph{spectrally unstable} for $F\gg 1$.
Note, in our study of spectral problem 
\eqref{eig_inf2} via Hill's method, we used 41 Fourier modes and 1000 Floquet parameters.

\begin{figure}[htbp]
 \begin{center}
$
\begin{array}{lccr}
(a) \includegraphics[scale=0.17]{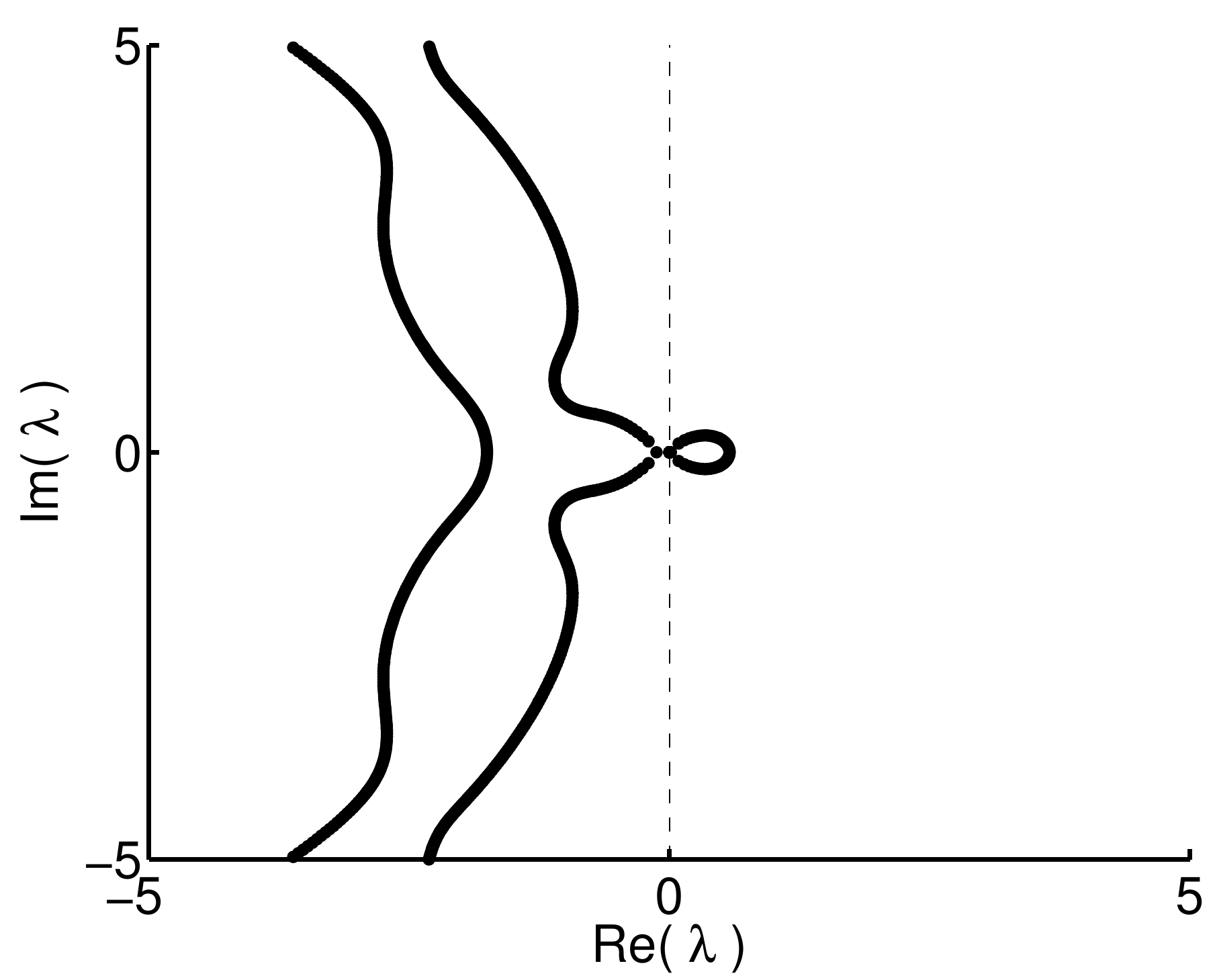}&(b) \includegraphics[scale=0.17]{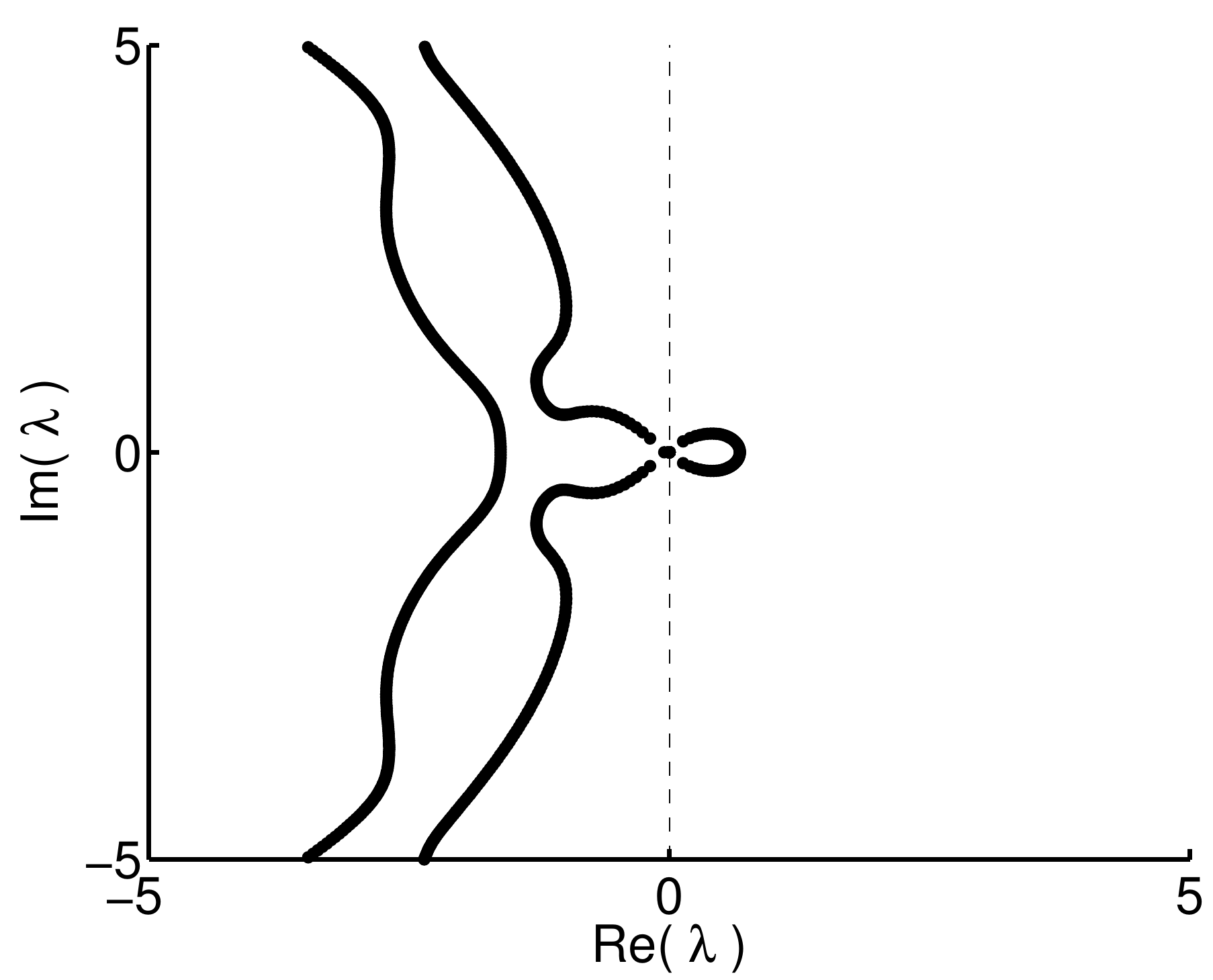} & (c) \includegraphics[scale=0.17]{pix/BJPRZ1fig473}
&(d) \includegraphics[scale=0.17]{pix/BJPRZ1fig303}
\end{array}
$
\end{center}
\caption{In (a) and (b) we plot, respectively for $F = 38$ and $F = 100$, the spectrum 
corresponding to eigenvalue problem
\eqref{eval2} with $\alpha = -2$, 
and in (c) the spectrum corresponding to the limiting spectral
problem
\eqref{eval}. Here 
$\nu  = 0.1$, $q_0 = 0.4$,  
$X = 437.6$, $c = 84.0$, and (a) $X_0 = 0.303$, (b) $X_0 = 0.044$.  In (d), we plot a numerical sampling (via Hill's method) of the unstable spectrum
corresponding to the spectral problem \ref{eig_inf2}, 
corresponding to the case $\alpha>-2$, for a representative periodic stationary solution of \eqref{profile-ham} .
}
\label{comparison}\end{figure}



Concerning the case $\alpha=-2$, the profile equation \eqref{profile_crit} was solved by using the 
Matlab functions bvp5c and bvp6c, where we treated $c_0$ as a free parameter in the bvp solver
and used numerical continuation to solve the profile as $k_0$ was varied.  Our numerical investigation of the associated spectral problem
\eqref{eval} covered $q_0\in[1.6,2.2]$ for $\nu = 2$ and $q_0\in[1.2,2.7]\cup [0.3,0.45]$ for $\nu = 0.1$ with 
step size $0.1$ in $q_0$ and varying, but smaller, steps in $k_0$ in the region where we could solve profiles numerically. 
Again, we found that all these waves have unstable spectra; see Figure \ref{comparison}(c) for an example. In Figure \ref{comparison}, we also plot the spectrum as determined using the \qut{$\alpha = -2$} scaling given in Section \ref{s:scaling} and show that in this scaling the spectrum is unstable for large $F$ as well as in the $F = \infty$ eigenvalue problem \eqref{eval}. Instability in the $F=\infty$ case was thus confirmed by multiple numerical checks, providing strong evidence of instability.

In conclusion, our numerical calculations strongly indicate that for $F$ sufficiently large, spectrally stable periodic traveling wave solutions of the St. Venant system \eqref{swl} do not exist. This justifies Numerical Observation 1 described in \ref{s:intro_infty}.

\subsection{Numerical investigation for intermediate $F$}\label{numerical:intermediate}

In order to better understand the stability of periodic traveling wave solutions of \eqref{swl} away from the distinguished limits $F\to 2^+$ and $F\to\infty$, we also carried out a numerical investigation for some``intermediate" Froude numbers.

To solve the profile equation numerically, we expand \eqref{profile} to obtain the profile ODE equation,
\begin{equation}
\tau'' = \left(\frac{-\tau^2}{c\nu}\right)\left(c^2\tau' - \frac{\tau'}{F^2\tau^3} - 1+\tau(q-c\tau)^2-2c\nu(\tau')^2/\tau^3\right),
\label{profile_expanded}
\end{equation}
and then proceed the same way as described 
in the studies of the limiting $F\to\infty$ systems described above,
this time using a relative error tolerance between $10^{-6}$ and $10^{-10}$ in the bvp solver.
Checking the slope condition \eqref{e:Lslope} across numerically determined profiles, we found that it
is satisfied for profiles with $F\lessapprox 3.5$ but violated for those with
larger $F$.

To study the spectrum, we worked with the original (unrescaled) 
eigenvalue system
\eqref{spec1},
associating parameters with those under the rescaling \eqref{scales}. We found that the results for Hill's method were more accurate using these original coordinates as opposed to the rescaled coordinates \eqref{scales}, even for large $F$. For $q_0 = 0.4$, $k_0$ ranging in the parameter space where profiles could be solved, and for $F$ ranging from 10 to either 20 or 30 by 1, we examined, via Hill's method and the Taylor expansion of the Evans function, the cases $\alpha = -0.7, -1, -1.4, -1.5, -1.6, -1.7, -1.8, -1.9, -2$. We used 201-3000 Fourier modes and 21-31 Floquet parameters in Hill's method and 33-201 Chebyshev nodes for the integral in the Taylor expansion. For each value of $\alpha$, we found that a lower stability boundary curve and an upper high-frequency instability curve meet at some value $F^*(\alpha)$ after which no waves are stable. See Figures \ref{fig476}. 
In addition, we find that the upper and lower stability boundaries appear to have a linear relationship of the form $\log(X/\nu) = b_1\log(F)+b_2\log(q)+ b_3$; see Figure \ref{fig462}. We used a combination of Hill's method and Evans function computations
to determine stability or instability. However, as the period $X$ increases, a small loop of spectra parameterized by the whole interval of Floquet parameters 
$\xi \in [-\pi/X,\pi/X)$ 
shrinks until eventually neither the Evans function nor Hill's method can definitively determine small frequency stability at which point we rely on Hill's method for the overall behavior of the spectrum; 
in this region, 
analytic verification of the stability regions as $F\to \infty$ would be beneficial. However, for the lower intermediate $F$ region where the period is smaller, the stability picture appears clear. In particular, for $\alpha = -2$ our numerical study, though not a numerical proof, strongly suggest the stability region shown in Figure \ref{fig476} (a). As $F$ continues to increase, it becomes easier again to compute the spectrum with Hill's method as the spectrum approaches that of the limiting system given in equation \eqref{profile_crit}; see Figure \ref{comparison} (a)-(c) for an illustration.  For $F = 38$, we examined the full set of periods corresponding to those examined in Figure \ref{fig476}, and found no stability region,
confirming that by this point the upper and lower stability boundaries have met.


\begin{figure}[htbp]
 \begin{center}
$
\begin{array}{lcr}
(a) \includegraphics[scale=0.35]{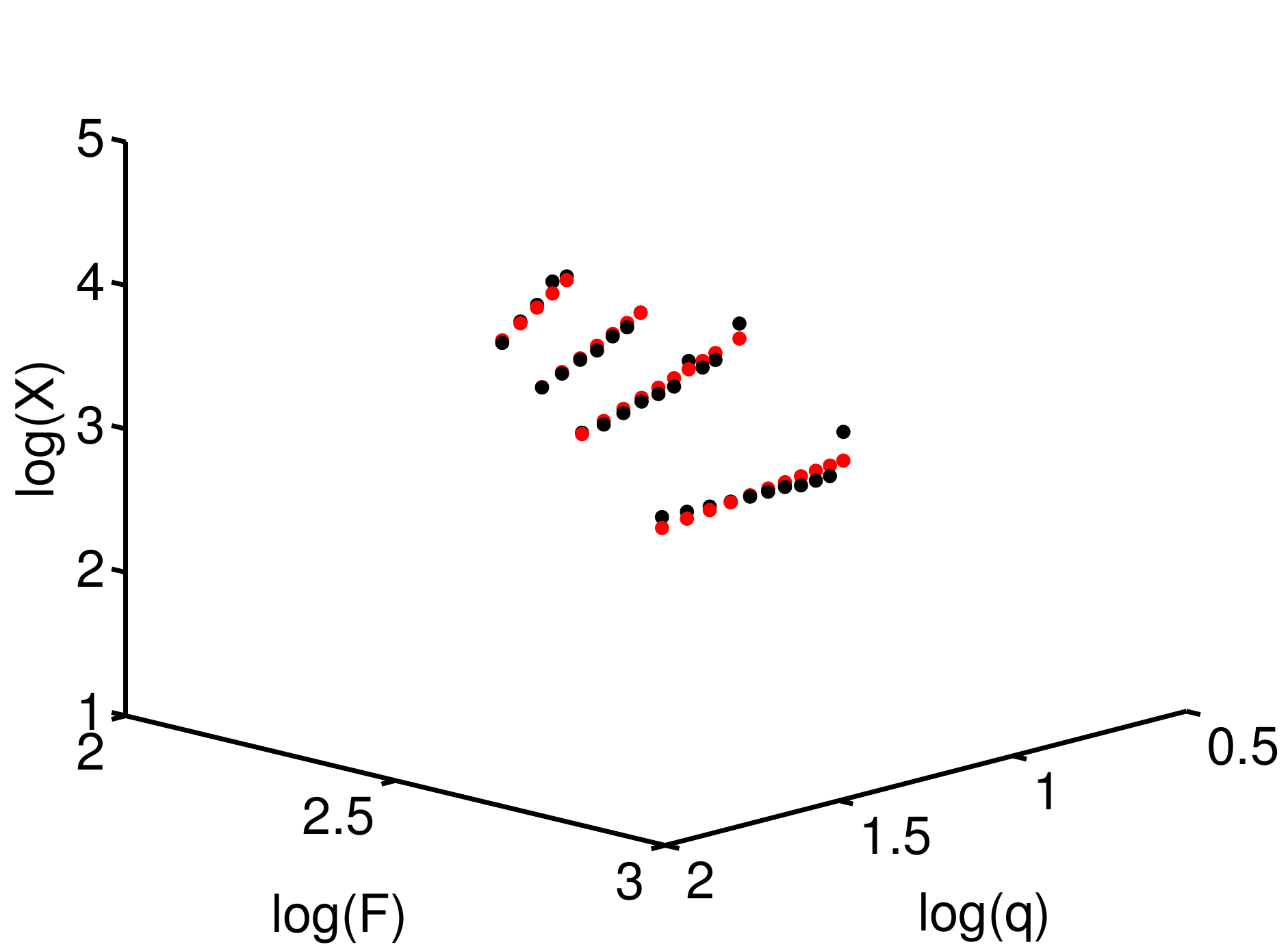}&\quad&(b) \includegraphics[scale=0.35]{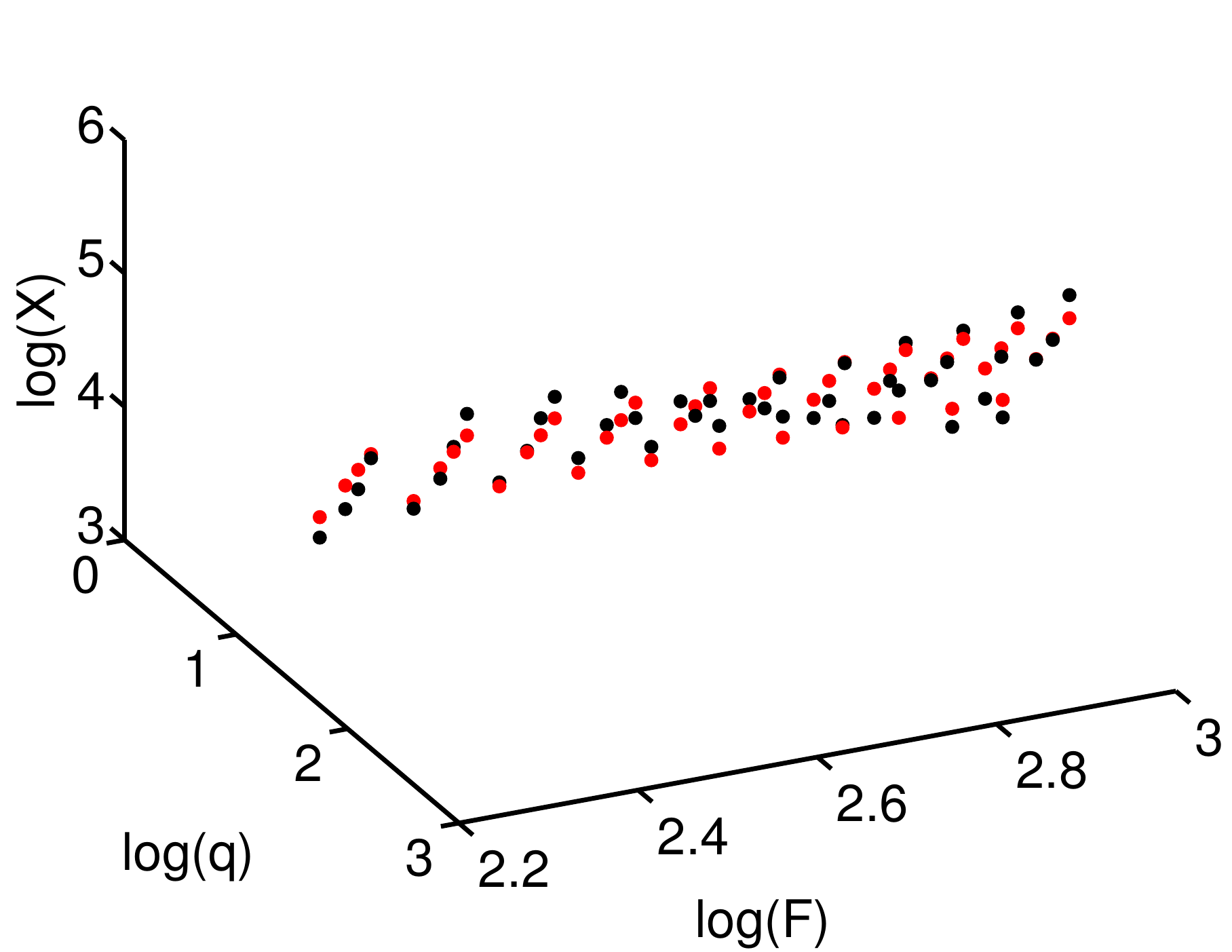}
\end{array}
$
\end{center}
\caption{In (a) and (b), black dots mark the computed boundary and pale dots (red in color plates) mark the best least curve fit. (a) Lower stability boundary. We have $\log(X) \approx = b_1\log(F)+b_2\log(q) + b_3$  where $b_1 = -0.692$, $b_2 = 3.46$, and $b_3 = 0.3$. Here $\alpha = -1.6, -1.8, -1.9, -2$. The maximum error is 0.200 and the maximum relative error is 0.056.  The average relative error is $0.012$ and the average absolute error is $0.041$. (b) High frequency stability boundary. We have $\log(X) \approx b_1\log(F)+b_2\log(q) + b_3$  where $b_1 = -0.791$, $b_2 = 1.73$, and $b_3 = 3.92$. Here $\alpha = -1.6, -1.8, -1.9, -2$.  The maximum error is 0.228 and the maximum relative error is 0.052. The average relative error is $0.024$ and the average absolute error is $0.103$. }
\label{fig462}\end{figure}

\begin{figure}[htbp]
 \begin{center}
$
\begin{array}{ll}
 (a) \includegraphics[scale=0.25]{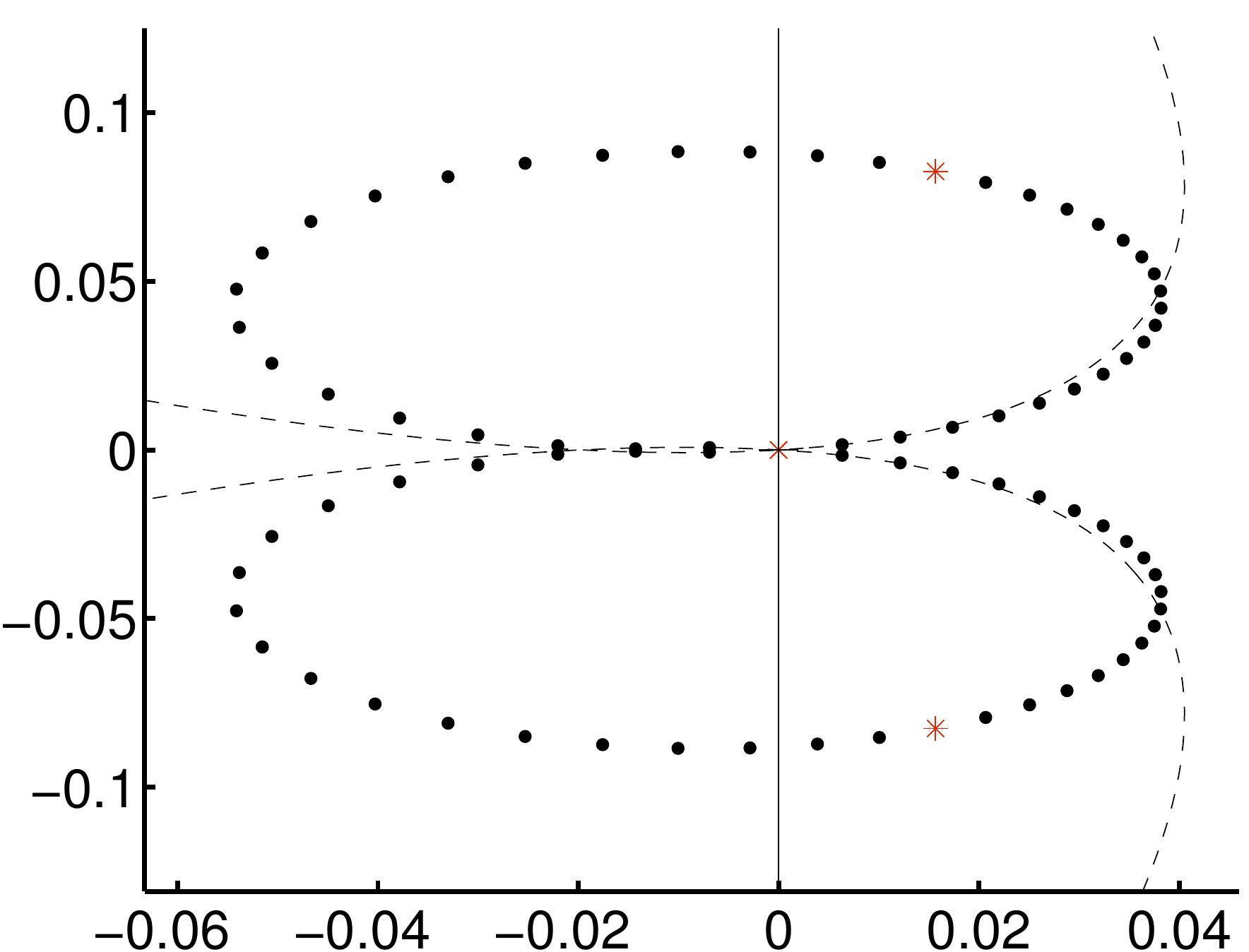} & (b) \includegraphics[scale=0.25]{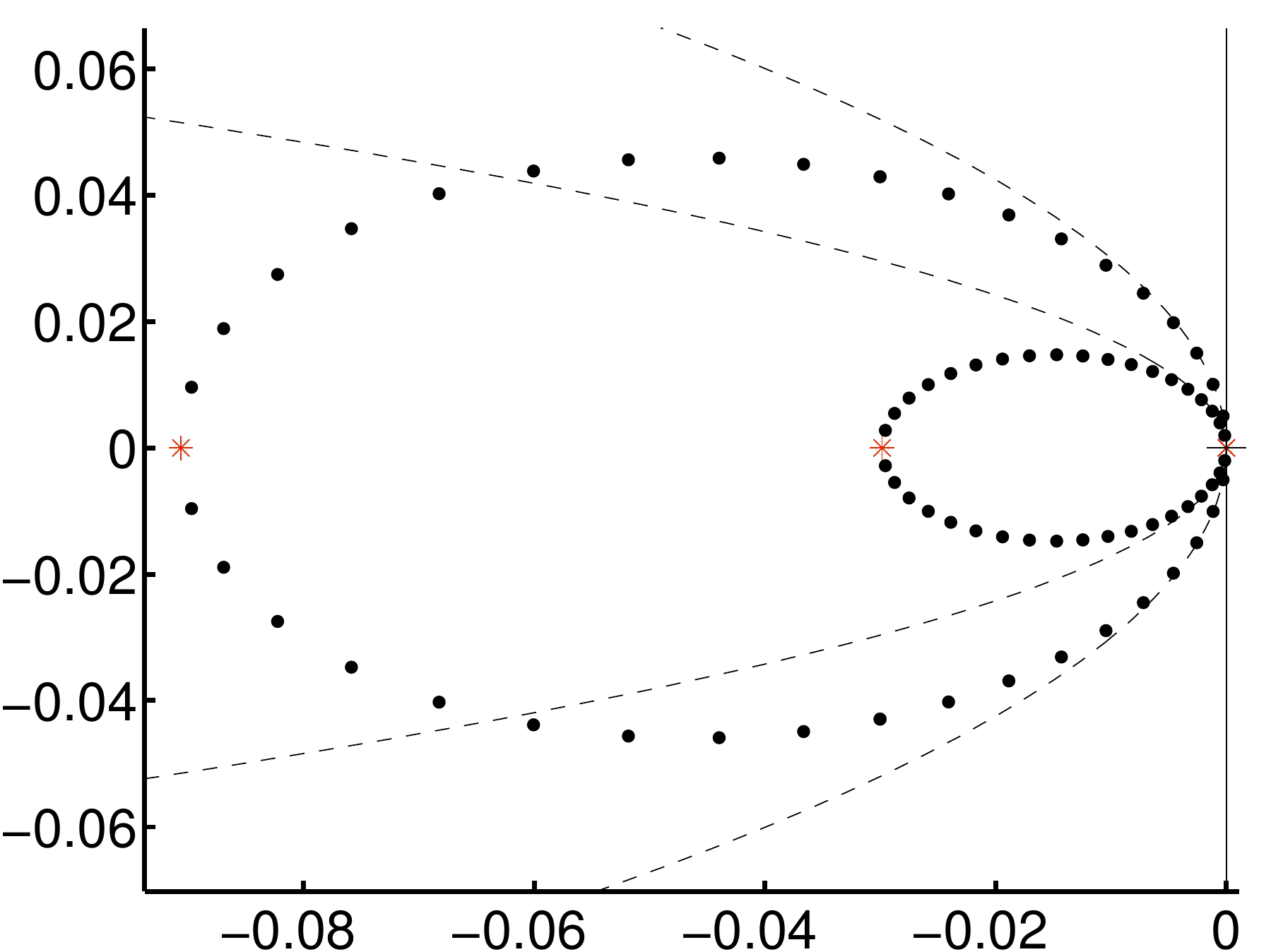} \\
 (c) \includegraphics[scale=0.25]{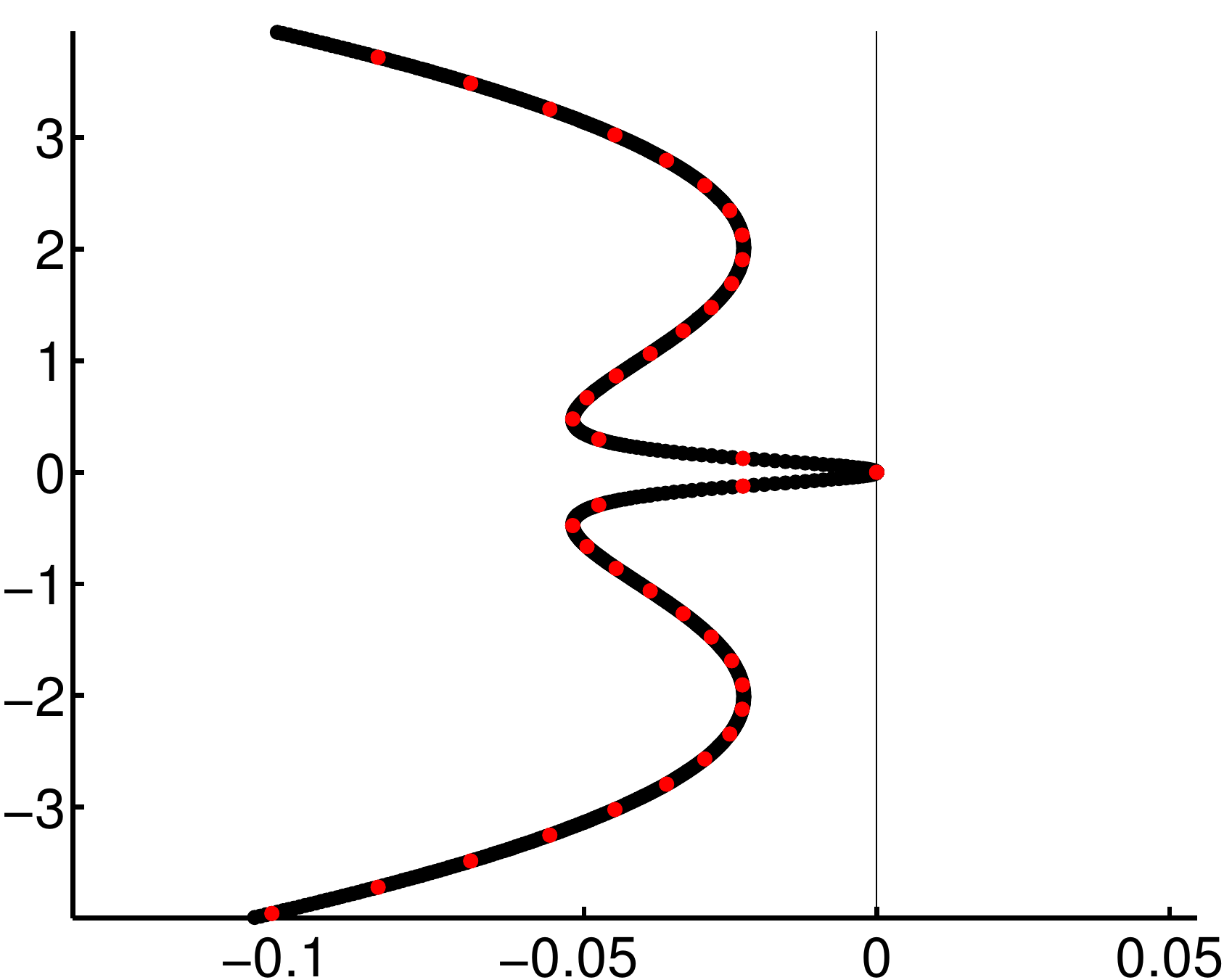} & (d) \includegraphics[scale=0.25]{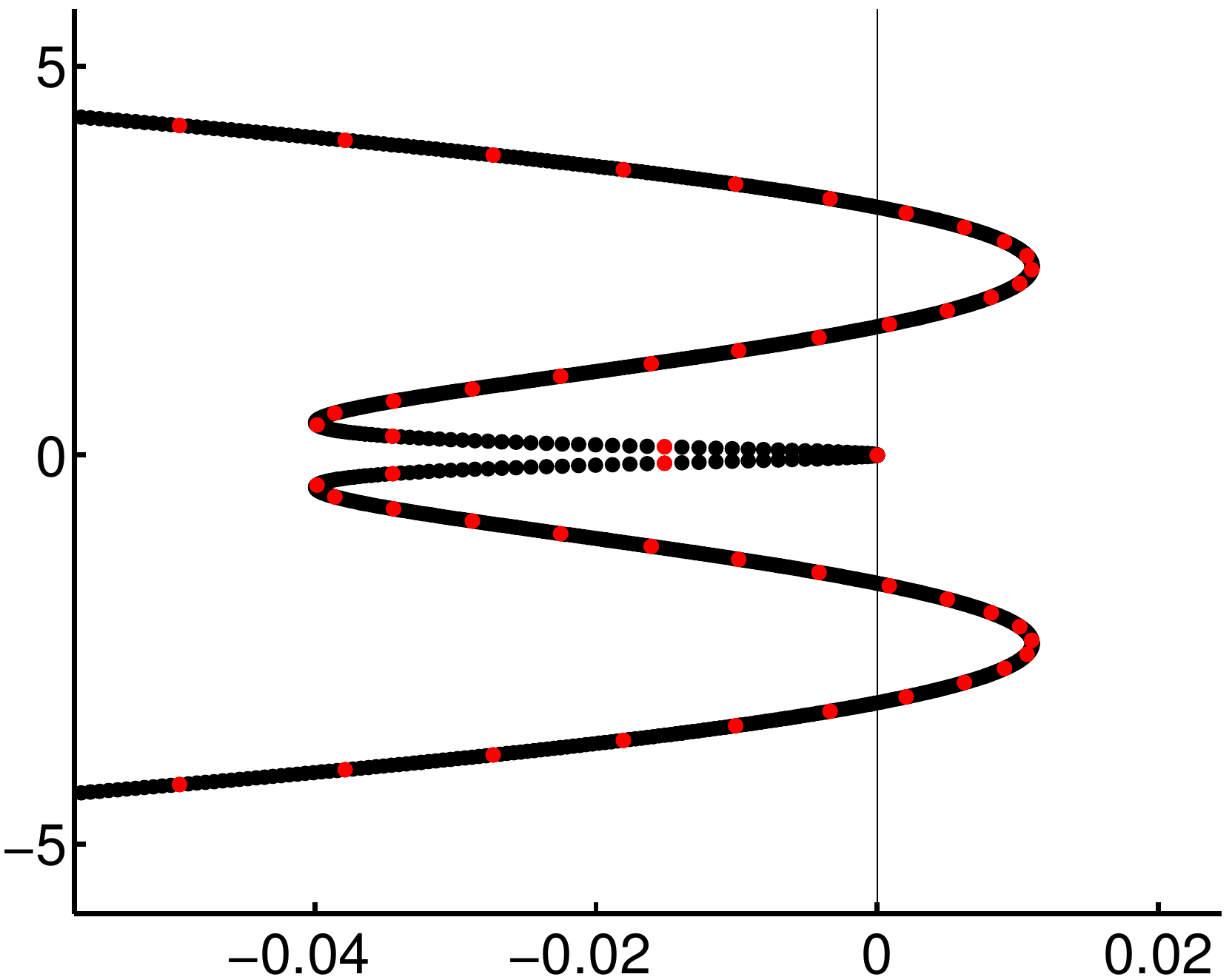}\\

\end{array}
$
\end{center}
\caption{Plot of spectra for the intermediate eigenvalue problem. We plot the 
	approximations returned by Hill's method with black dots and 
	those 
	obtained by Taylor expanding the Evans function with 
	light dashes.
	We plot the spectra corresponding 
	to
	Floquet parameter zero with a pale star (red in color plates). 
	The solid line indicates the imaginary axis. In all cases, $\alpha = -2$, $\nu = 0.1$, and  $q_0 = 0.4$. 
Other parameters 
are 
	(a) $F = 6$, $ X = 7.83$, (b)  $F = 6$, $X = 8.78$, (c) $F = 10$, $X = 76.9$, (d) $F = 10$, $X = 90.9$.
}
\label{showspec}
 \end{figure}

\medskip
{\bf Acknowledgement}:
The numerical stability computations in this paper were carried out
using the STABLAB package developed by Jeffrey Humpherys and
the first and last authors; see \cite{BHZ} for documentation. 
We gratefully acknowledge his contribution. We thank also
Indiana University Information Technology Service for
the use of the Quarry computer with which some of
our computations were carried out. Authors in various combinations also acknowledge the kind hospitality 
of \'ENS Paris, Indiana University, INSA Toulouse and Universit\'e Lyon 1
that hosted them during parts of the preparation of the present paper.
Finally, we thank Sergey Gavrilyuk for a helpful exchange
regarding the experimental literature.

\appendix

\section{Notation for High Frequency Bounds}\label{s:HFB}

The proof of Lemma \ref{lemma1} in Section \ref{s:cvgzero} relied heavily on computations previously carried out in detail in Section 4.1 
of 
\cite{BJRZ}. There, the authors were concerned with the stability analysis of traveling \emph{solitary} wave solutions of the viscous St. Venant equation \eqref{swl}, i.e. those traveling wave solutions that 
decay
exponentially fast to zero as $x-\bar ct\to\pm\infty$.  By a straightforward 
adaptation of this analysis to the periodic case,\footnote{
Namely, using formally identical coordinate changes depending on the profile and
its derivatives.}
it follows that, for each $k\in\mathcal{P}$, there exists an $X_\delta$-periodic change of variables $W(\cdot)=P(\cdot;\lambda,\delta)Z(\cdot)$ that transforms the spectral problem \eqref{evans1} into an equivalent system of the form $W'(x)=\left(D(x,\lambda)+\Theta(x,\lambda)\right)W(x)$, 
supplemented with the boundary conditions $W(X_\delta)=e^{i\xi}W(0)$ for some $\xi\in[-\pi/X_\delta,\pi/X_\delta)$, where the $3\times 3$ matrix-valued $D$ is defined via
	$
D(\cdot,\lambda)=\diag\left(\frac{\lambda}{\bar c}+\theta_0+\frac{\theta_1}{\lambda},~~\bar\tau\sqrt\frac{\lambda}{\nu},~~-\bar\tau\sqrt\frac{\lambda}{\nu}\right),
$
where $\theta_0=\frac{\bar\alpha \bar\tau^2}{\bar c\nu}$ and
$\theta_1=-\frac{\bar\tau^2(q-\bar c\bar\tau)^2}{\nu}-\frac{\bar c\bar\alpha}{\sqrt\nu}\frac{\bar\tau^3}{\nu^{3/2}}$
with $\bar \alpha:=\bar\tau^{-3}(F^{-2}+2\bar c\nu\bar\tau')$.
Moreover, the $3\times 3$ matrix $\Theta(x,\lambda)$ has the block structure
$
\Theta=\left(\begin{array}{cc}
	\Theta_{++} & \Theta_{+-}\\
	\Theta_{-+} & \Theta_{--}
	\end{array}\right),
$
where $\Theta_{++}$ is a $2\times 2$ matrix.  The individual blocks of the matrix $\Theta$ can be expanded as cubic polynomials
in $\lambda^{-1/2}$ with matrix valued coefficients.  More precisely, they expand as
\[
\left\{\begin{aligned}
\Theta_{++}(\cdot,\lambda)&=\Theta_{++}^0 + \lambda^{-1/2}\Theta_{++}^1 + \lambda^{-1}\Theta_{++}^2 +\lambda^{-3/2}\Theta_{++}^3\\
\Theta_{+-}(\cdot,\lambda)&=\Theta_{+-}^0 + \lambda^{-1/2}\Theta_{+-}^1 + \lambda^{-1}\Theta_{+-}^2 +\lambda^{-3/2}\Theta_{+-}^3\\
\Theta_{-+}(\cdot,\lambda)&=\Theta_{-+}^0 + \lambda^{-1/2}\Theta_{-+}^1 + \lambda^{-1}\Theta_{-+}^2 +\lambda^{-3/2}\Theta_{-+}^3\\
\Theta_{--}(\cdot,\lambda)&=\Theta_{--}^0 + \lambda^{-1/2}\Theta_{--}^1 + \lambda^{-1}\Theta_{--}^2 +\lambda^{-3/2}\Theta_{--}^3,
\end{aligned}\right.
\]
with
\begin{align*}
\Theta^0_{++}&=  \bp 0 &-\frac{\bar\tau^2}{\nu}\\
-\frac{\bar\alpha\bar\tau^2}{2\nu}&\frac{\bar\tau'}{\bar\tau}-\frac{\bar c\bar\tau^2}{2\nu}\ep,
\qquad \Theta^1_{+-}= \bp
-\frac{2\bar\tau'}{\sqrt{\nu}}+\frac{\bar\tau^3}{\nu^{3/2}}(\frac{\bar\alpha}{\bar c}+\bar c)\\
-\frac{\bar\tau^{2}\left(q-\bar c\bar\tau\right)}{\sqrt{\nu}}-\frac{\bar\alpha}{2}\frac{\bar\tau^3}{\nu^{3/2}}
\ep,\\
\Theta^1_{++}&= \bp 0&
-\frac{2\bar\tau'}{\sqrt{\nu}}+\frac{\bar\tau^3}{\nu^{3/2}}(\frac{\bar\alpha}{\bar c}+\bar c)\\
\frac{\bar\tau}{2\sqrt{\nu}}\left(\left(q-\bar c\bar\tau\right)^2+c\bar\alpha\frac{\bar\tau^2}{\nu}\right)&
\frac{\bar\tau^{2}\left(q-\bar c\bar\tau\right)}{\sqrt{\nu}}+\frac{\bar\alpha}{2}\frac{\bar\tau^3}{\nu^{3/2}}
\ep,\\
\Theta^2_{++}&=\bp  0 &
-\frac{2\bar\tau^{3}\left(q-\bar c\bar\tau\right)}{\nu}\\
0&\frac{\bar\tau^{2}\left(q-\bar c\bar\tau\right)^2}{2\nu}+\frac{\bar c\bar\alpha}{2\sqrt{\nu}}\frac{\bar\tau^3}{\nu^{3/2}}
\ep,\\
\Theta^3_{++}&=\bp 0 &
-\frac{\bar\tau^{3}\left(q-\bar c\bar\tau\right)^2}{\nu^{3/2}}-\frac{\bar c\bar\alpha}{\sqrt{\nu}}\frac{\bar\tau^4}{\nu^2}\\
0&0\ep,
\qquad \Theta^0_{+-}= \bp \frac{\bar\tau^2}{\nu}\\
\frac{\bar\tau'}{2\bar\tau}-\frac{\bar c}{2}\frac{\bar\tau^2}{\nu}\ep,\\
\Theta^2_{+-}&= \bp
\frac{2\bar\tau^{3}\left(q-\bar c\bar\tau\right)}{\nu}\\
\frac{\bar\tau^{2}\left(q-\bar c\bar\tau\right)^2}{2\nu}+\frac{\bar c\bar\alpha}{2\sqrt{\nu}}\frac{\bar\tau^3}{\nu^{3/2}}\ep,
\qquad \Theta^3_{+-}= \bp
-\frac{\bar\tau^{3}\left(q-\bar c\bar\tau\right)^2}{\nu^{3/2}}-\frac{\bar c\bar\alpha}{\sqrt{\nu}}\frac{\bar\tau^4}{\nu^2}\\
0\ep,\\
\Theta^0_{-+}&=  \bp \frac{\bar\alpha\bar\tau^2}{2\nu}&
\frac{\bar\tau'}{\bar\tau}-\frac{\bar c\bar\tau^2}{2\nu}  \ep,
\qquad \Theta^2_{-+}= \bp
0&\frac{\bar\tau^{2}\left(q-\bar c\bar\tau\right)^2}{2\nu}+\frac{\bar c\bar\alpha}{2\sqrt{\nu}}\frac{\bar\tau^3}{\nu^{3/2}}
\ep,\\
\Theta^1_{-+}&=  \bp
\frac{\bar\tau}{2\sqrt{\nu}}\left(\left(q-\bar c\bar\tau\right)^2+\bar c\bar\alpha\frac{\bar\tau^2}{\nu}\right)&
\frac{\bar\tau^{2}\left(q-\bar c\bar\tau\right)}{\sqrt{\nu}}+\frac{\bar\alpha}{2}\frac{\bar\tau^3}{\nu^{3/2}}\ep,
\qquad \Theta^0_{--}=\bp\frac{\bar\tau'}{\bar\tau}-\frac{\bar c\bar\tau^2}{2\nu}
\ep,\\
\Theta^1_{--}&= \bp-\frac{\bar\tau^{2}\left(q-\bar c\bar\tau\right)^2}{2\nu}+\frac{\bar c\bar\alpha}{2\sqrt{\nu}}\frac{\bar\tau^3}{\nu^{3/2}}\ep,
\qquad \Theta^2_{--}=\bp\frac{\bar\tau^{2}\left(q-\bar c\bar\tau\right)^2}{2\nu}+\frac{\bar c\bar\alpha}{2\sqrt{\nu}}\frac{\bar\tau^3}{\nu^{3/2}}\ep.
\end{align*}
%
From this, the matrices $N_{D,D},N_{H,D},N_{D,H}$ in \eqref{Nmatrix} are obtained through the identification
$$
\left(
\begin{array}{cc}
\Theta_{++}&\Theta_{+-}\\
\Theta_{-+}&\Theta_{--}\\
\end{array}
\right)
\ =\ \left(
\begin{array}{cc}
 0 &N_{H,D}\\
N_{D,H}&N_{D,D}\\
\end{array}
\right),
$$
where $N_{DD}$ is a $2\times 2$ matrix.

\section{Computational Methods}\label{s:compmethod}

For completeness, we very briefly describe the computational methods utilized in our investigations reported in Section \ref{s:numinf}  and \ref{numerical:intermediate} above.
For more details, the interested reader is referred to \cite{BJNRZ2} where an analogous numerical study is performed on the generalized Kuramoto-Sivashinsky equation.

%
%

\subsection{Hill's method} To determine the global picture of spectrum of a linear 
$X$-periodic operator $L$, 
we use Hill's method. The linear operator $L$ takes the form $L_{j,k}= \sum_{q=1}^{m_{jk}}f_{j,k,q}(x)\frac{\partial^q}{\partial x^q}$ where the $f_{j,k,q}(x)$ are $X$ periodic. Following \cite{DKCK}, we represent the coefficient functions $f_{j,k,q}(x)$ as a Fourier series $f_{j,k,q}(x)=\sum_{j=-\infty}^{\infty} \hat \phi_{j,k,q}e^{i2\pi jx/X}$. We use Matlab's fast Fourier Transform to determine the coefficients $\hat \phi_{j,k,q}$. The generalized eigenfunctions are represented 
as\footnote{Mark that in the standard implementation of Hill's method a periodic wave is treated as a periodic function of twice its fundamental period. As recalled in \cite[Section~3.1, p.67]{R}, this is originally motivated by the fact that in applications to self-adjoint second-order scalar operators, the Floquet-zero spectrum will then provide edges of spectral bands.} 
$v(x) = e^{i\xi x}\sum_{j=-\infty}^{\infty} \hat v_j e^{i\pi jx/X}$, where $\xi \in [-\pi/2X,\pi/2X)$ is the Floquet exponent. Upon substituting the Fourier expansions into the eigenvalue problem, fixing $\xi$, and equating the coefficients of the resulting Fourier series, we arrive at the eigenvalue problem $\hat L^{\xi} \hat v = \lambda \hat v$ where $\hat L^{\xi}$ is an infinite dimensional matrix. The spectrum of the operator $L$ is given by 
$\sigma(L) = \bigcup_{\xi \in[-\pi/2X,\pi/2X)} \sigma(L_{\xi})$. 
Truncating the Fourier series at $N$ terms leads to a finite dimensional eigenvalue problem $L_N^{\xi}\hat v = \lambda \hat v$. The matrix $L_N^{\xi}$ is of the form $M_2^{-1}M_1$ where $M_1\hat v = \lambda M_2\hat v$ is the original eigenvalue problem. 
Typically $M_2$ is the identity, but in \eqref{eig_inf2}, $M_2$ is diagonal with $j$th diagonal entry $i(j+\xi)$, hence we avoid $\xi = 0$ in that case so that $M_2$ is invertible. We compute $\sigma(L_N^{\xi})$ on a mesh to approximate the spectral curves of $L$. 
For our numerical studies, we used the implementation of Hill's method built into STABLAB \cite{BHZ}. 
For discussion of Hill's method and its convergence, see \cite{CDe, DK,JZ4}.

\subsection{Evans function} Our results for Hill's method are augmented by use of the Evans function.
To this end, note all the spectral problems we study, such as theone given in \eqref{eig_inf2}, may be written as a first order
system of the form $W'(x)=\mathbb{A}(x;\lambda)W(x)$ and that, further, $\lambda\in\CM$ belongs to the essential
spectrum of the associated linearized operator $L$ if and only if this 1st order system admits a non-trivial solution
satisfying
\[
Y(x+X;\lambda)=e^{i\xi X}Y(x;\lambda),\quad\forall x\in\RM
\]
where here $X$ denotes the period of the coefficients of $L$.
Following Gardner \cite{G}, the Evans function is defined as 
$D(\lambda,\xi):= \det\left(\Psi(X,\lambda)-e^{i\xi X} \right)$ 
where the matrix $\Psi(x,\lambda)$ satisfies $\d_x\Psi(x,\lambda) = \mathbb{A}\Psi(x,\lambda)$ and 
$\Psi(0,\lambda) = \Id$. 
%
By construction then, the roots of $D(\cdot;\xi)$
%
agree in location and algebraic multiplicity with the eigenvalues of the associated $\xi$-dependent spectral problem. 
Unfortunately, the Evans function as described here is poorly conditioned for numerical computation.
To remedy this, as in \cite{BJNRZ2}, we use the observation of Gardner \cite{G} that
$$
D(\lambda,\xi):=\det(\Psi(X)-e^{i\xi X}\Id)=\det \bp \Psi(X)& e^{i\xi X}\Id\\ \Id & \Id \ep,
$$
to express the Evans function as an exterior product of solutions of
$$
\bp Y\\\alpha\ep'= \bp \mathbb{A}(\cdot,\lambda)Y \\0\ep,
$$
with data $( \Id ,\Id)^T$ at $x=0$ and $( e^{i\xi X}\Id,\Id)^T$
at $x=X$; for details see \cite{BJNRZ2}. We then use the {\it polar coordinate method} of \cite{HuZ} to evolve the solutions. This algorithm is numerically well-conditioned \cite{Z1}. All computations were carried out using STABLAB \cite{BHZ}. 

As mentioned above, Hill's method is ideal for obtaining a global picture of the spectrum and the Evans function can be evaluated on contours and the winding number evaluated to determine the presence of zeros. However, neither method determines definitively whether unstable spectra of arbitrarily small size exist, due to numerical error. In particular, such methods can not be used to determine the spectrum of the associated linearized operators in a sufficiently small neighborhood of the origin, i.e. they can not resolve the modulational instability problem.    A strategy for rigorously computing \emph{stability}, which we utilize in our intermediate $F$ numerical studies
reported in \ref{numerical:intermediate} above, involves a Taylor expansion of the Evans function as we now briefly describe; see \cite{BJNRZ2} for details. 

Due to the presence of a conservation law in the governing system (see \cite{Se,JZN,JNRZ2,R}) the Evans function has a double root at the origin when $\xi = 0$.  As such, the Taylor expansion of the Evans function about the origin $(\lambda,\xi)=(0,0)$  takes the form
$D(\lambda,\xi)=c_{2,0}\lambda^2+c_{1,1}\lambda\xi+c_{0,2}\xi^2+c_{3,0}\lambda^3+c_{2,1}\lambda^2\xi+c_{1,2}\lambda\xi^2+c_{0,3}\xi^3+O(|\lambda|^4+|\xi|^4)$
where the coefficients $c_{k,j}$ may be determined via Cauchy's integral formula,
\be\label{coefint}
c_{k,j}=-\frac{1}{4\pi^2}\oint_{\partial B(0,r)}\oint_{\partial B(0,r)}D(\lambda,\xi)\lambda^{-k-1}\xi^{-j-1}d\lambda~d\xi
\ee
with $r>0$ sufficiently small. Setting
$\alpha_j=\frac{-c_{1,1}+(-1)^{j+1}\sqrt{c_{1,1}^2-4c_{2,0}c_{0,2}}}{2c_{2,0}}$,
$\beta_j=-\frac{c_{3,0}\alpha_j^3+c_{2,1}\alpha_j^2 +c_{1,2}\alpha_j+c_{0,3}}
{2c_{2,0}\alpha_j+c_{1,1}}$,
one readily checks that the roots of the Evans function near $(\lambda,\xi)=(0,0)$ may be continued for $|\xi|\ll 1$ as
\[
\lambda_j(\xi)= \alpha_j \xi +\beta_j\xi^2+\frac{\xi^3}{2}\int_0^1(1-s)^2\lambda_j'''(s\xi)ds.
\]
The spectral curves at the origin are thus approximated by $\alpha_j \xi + \beta_j \xi^2$ with spectral stability corresponding to the case $\alpha_j\in\RM i$ and $\Re(\beta_j)<0$; see \cite{BJNRZ2} for details.

In practice, to compute the Taylor expansion coefficients, rather than compute the Evans function on the contour integral in the variable $\lambda$ for fixed $\xi$ given in \eqref{coefint}, we interpolate with $\sum_{k=0}^K e^{i k\xi x}$ ($K =3$ is the largest power of $e^{i\xi x}$ that appears) and then use the Taylor expansion $e^{ik\xi x} = 1+ (ik\xi x)+(ik\xi x)^2/2 + ...$ yielding $D(\lambda,\xi) = \sum_{k=0}^{\infty} c_k \xi^k$, from which the contour integral can be determined simply by reading off the corresponding coefficient. Calling the quantity just determined $\tilde D$, we see
\begin{equation}
\frac{1}{2\pi i} \oint_{|\lambda|= R_1} \frac{\tilde D(\lambda)}{\lambda^{r+1}} d\lambda= \frac{1}{2\pi} \int_{-\pi}^{\pi} \frac{\tilde D(Re^{i\theta})}{R^re^{ir\theta}}d\theta
= \frac{1}{2R^r}\int_{-1}^{1} \tilde D(Re^{i\pi \theta}) e^{-ir\pi \theta} d\theta,
\notag
\end{equation}
which we compute by approximating the integrand with Chebyshev interpolation and integrating. 


\section{Computational effort}\label{s:comp}
\subsection{Computational environment}

In carrying out our numerical investigations, we used a MacBook laptop with 2GB memory and a duo core Intel processor with 2GHz processing speed, a 2009 Mac Pro with 16GB memory and two quad-core intel processors with 2.26 GHz processing speed, and Quarry, a supercomputer at Indiana University consisting of 140 IBM HS21 Blade servers with two 2GH quad-core Intel Zeon 5335 processors per node and delivering 8.96 teraflops processing speed.
All computations were done using Matlab and the Matlab based stability package STABLAB.

\subsection{Computational time}

We begin by providing computational statistics for the representative parameter set $\alpha = -2$, $q_0 = 0.4$, $F = 10$, and $X = 50$. We compute the Evans function, $D(\lambda,\xi)$, on a semicircular contour, $\Omega$, of radius $R = 0.2$ with 42 evenly spaced Floquet parameters $\xi\in[-\pi/X,-\pi/(10X)]\cup [\pi/(10X),\pi/X]$. We require the relative error between contour points of the image contour, $Y_{\xi}$, $D(\cdot,\xi): \Omega\to Y_{\xi}$, not exceed 0.2 so that Rouch\'e's theorem implies the winding number of $Y_{\xi}$ corresponds to the number of roots of $D(\cdot,\xi)$ in $\Omega$.  We use 277 points in $\Omega$, chosen adaptively, to achieve 0.2 relative error in each $Y_{\xi}$ at a computational cost of 56.0 seconds using 8 Matlab workers on Quarry to determine the winding number is zero. Computing the Taylor expansion of the Evans function at the origin requires 61.7 seconds on Quarry, while computing the spectrum via Hill's method using 603 Fourier modes and 21 Floquet parameters comes at a computational cost of 143 seconds. 

As the period $X$ increases or as $F$ increases, the number of Fourier modes needed in Hill's method increases. Using 600 Fourier modes typically takes around 3 minutes on the Mac Pro, while using 1600 Fourier modes takes about 77 minutes, and using 3000 Fourier modes requires approximately 8 hours. 

In creating Figure \ref{fig476} (a) it took 4.36 days of computation time to evaluate the Taylor coefficients and 34.5 days to compute the spectrum using Hill's method, while the Evans function required an estimated 58 hours. 
A typical profile requires only a few seconds to compute, but we must use continuation whereby we use a nearby profile as an initial guess in the boundary value solver, so that computing the profiles also required a great computational effort. Overall, taking into account the use of parallel computing and all values of $\alpha$ investigated, we estimate that total computational time for the project exceeds a year.

\bibliographystyle{alpha}
\bibliography{Ref_fto2} 

\newcommand{\etalchar}[1]{$^{#1}$}
\def\cprime{$'$} \def\cprime{$'$}
\begin{thebibliography}{DKCK07}

\bibitem[AeM91]{A}
Mina~B. Abd-el Malek.
\newblock Approximate solution of gravity-affected flow from planar sluice gate
  at high froude number.
\newblock {\em J. Comp. \& Appl. Math.}, 35(1–3):83 -- 97, 1991.

\bibitem[Bar14]{B}
Blake Barker.
\newblock Numerical proof of stability of roll waves in the small-amplitude
  limit for inclined thin film flow.
\newblock {\em J. Differential Equations}, 257(8):2950--2983, 2014.

\bibitem[BD09]{BD}
Nate Bottman and Bernard Deconinck.
\newblock Kd{V} cnoidal waves are spectrally stable.
\newblock {\em Discrete Contin. Dyn. Syst.}, 25(4):1163--1180, 2009.

\bibitem[BGMR]{BMR}
Sylvie Benzoni-Gavage, Colin Mietka, and L.~Miguel Rodrigues.
\newblock Co-periodic stability of periodic waves in some {H}amiltonian {PDE}s.
\newblock In preparation.

\bibitem[BGNR14]{BNR}
Sylvie Benzoni-Gavage, Pascal Noble, and L.~Miguel Rodrigues.
\newblock Slow modulations of periodic waves in {H}amiltonian {PDE}s, with
  application to capillary fluids.
\newblock {\em J. Nonlinear Sci.}, 24(4):711--768, 2014.

\bibitem[BHZ]{BHZ}
Blake Barker, Jeffrey Humpherys, and Kevin Zumbrun.
\newblock {\em STABLAB: A {MATLAB}-based numerical library for {E}vans function
  computation}.
\newblock Available at: http://impact.byu.edu/stablab/.

\bibitem[BJN{\etalchar{+}}10]{BJNRZ3}
Blake Barker, Mathew~A. Johnson, Pascal Noble, L.~Miguel Rodrigues, and Kevin
  Zumbrun.
\newblock Whitham averaged equations and modulational stability of periodic
  traveling waves of a hyperbolic-parabolic balance law.
\newblock {\em Journées \'Equations aux d\'eriv\'ees partielles}, pages 1--24,
  6 2010.
\newblock Available as http://eudml.org/doc/116384.

\bibitem[BJN{\etalchar{+}}12]{BJNRZ1}
Blake Barker, Mathew~A. Johnson, Pascal Noble, L.~Miguel Rodrigues, and Kevin
  Zumbrun.
\newblock Stability of periodic {K}uramoto-{S}ivashinsky waves.
\newblock {\em Appl. Math. Lett.}, 25(5):824--829, 2012.

\bibitem[BJN{\etalchar{+}}13]{BJNRZ2}
Blake Barker, Mathew~A. Johnson, Pascal Noble, L.~Miguel Rodrigues, and Kevin
  Zumbrun.
\newblock Nonlinear modulational stability of periodic traveling-wave solutions
  of the generalized {K}uramoto-{S}ivashinsky equation.
\newblock {\em Phys. D}, 258(0):11 -- 46, 2013.

\bibitem[BJRZ11]{BJRZ}
Blake Barker, Mathew~A. Johnson, L.~Miguel Rodrigues, and Kevin Zumbrun.
\newblock Metastability of solitary roll wave solutions of the {S}t. {V}enant
  equations with viscosity.
\newblock {\em Phys. D}, 240(16):1289--1310, 2011.

\bibitem[BL05]{BL}
Abdellatif Boudlal and V.~Yu Liapidevskii.
\newblock {Stability of regular roll waves}.
\newblock {\em {J. Computational Technologies}}, 10 (2):3--14, 2005.

\bibitem[BM04]{BM}
Neil~J. Balmforth and Shreyas~D. Mandre.
\newblock Dynamics of roll waves.
\newblock {\em J. Fluid Mech.}, 514:1--33, 2004.

\bibitem[BN95]{BN}
Doron~E. Bar and Alexander~A. Nepomnyashchy.
\newblock Stability of periodic waves governed by the modified {K}awahara
  equation.
\newblock {\em Phys. D}, 86(4):586--602, 1995.

\bibitem[Bro69]{Br1}
Robert~R. Brock.
\newblock Development of roll-wave trains in open channels.
\newblock {\em J. Hydraul. Div.}, 95(4):1401--1428, 1969.

\bibitem[Bro70]{Br2}
Robert~R. Brock.
\newblock Periodic permanent roll waves.
\newblock {\em J. Hydraul. Div.}, 96(12):2565--2580, 1970.

\bibitem[CD02]{CD}
Hsueh-Chia Chang and Evgeny~A. Demekhin.
\newblock {\em Complex wave dynamics on thin films}, volume~14 of {\em Studies
  in Interface Science}.
\newblock Elsevier Science B.V., Amsterdam, 2002.

\bibitem[CD10]{CDe}
Christopher~W. Curtis and Bernard Deconinck.
\newblock On the convergence of {H}ill's method.
\newblock {\em Math. Comp.}, 79(269):169--187, 2010.

\bibitem[CDK93]{CDK}
Hsueh-Chia Chang, Evgeny~A. Demekhin, and Dmitry~I. Kopelevich.
\newblock Laminarizing effects of dispersion in an active-dissipative nonlinear
  medium.
\newblock {\em Physical Review D}, 63(3-4):299--320, 1993.

\bibitem[CE90]{CE}
Pierre Collet and Jean-Pierre Eckmann.
\newblock {\em Instabilities and fronts in extended systems}.
\newblock Princeton Series in Physics. Princeton University Press, Princeton,
  NJ, 1990.

\bibitem[DK06]{DK}
Bernard Deconinck and J.~Nathan Kutz.
\newblock Computing spectra of linear operators using the
  {F}loquet-{F}ourier-{H}ill method.
\newblock {\em J. Comput. Phys.}, 219(1):296--321, 2006.

\bibitem[DKCK07]{DKCK}
Bernard Deconinck, Firat Kiyak, John~D. Carter, and J.~Nathan Kutz.
\newblock Spectr{UW}: a laboratory for the numerical exploration of spectra of
  linear operators.
\newblock {\em Math. Comput. Simulation}, 74(4-5):370--378, 2007.

\bibitem[Dre49]{Dr}
Robert~F. Dressler.
\newblock Mathematical solution of the problem of roll-waves in inclined open
  channels.
\newblock {\em Comm. Pure Appl. Math.}, 2:149--194, 1949.

\bibitem[EMR93]{EMR}
Nicholas~M. Ercolani, David~W. McLaughlin, and Heinz Roitner.
\newblock Attractors and transients for a perturbed periodic {K}d{V} equation:
  a nonlinear spectral analysis.
\newblock {\em J. Nonlinear Sci.}, 3(4):477--539, 1993.

\bibitem[FSMA03]{FSMA}
B.~Freeze, Sergey Smolentsev, Neil Morley, and Mohamed~A. Abdou.
\newblock Characterization of the effect of froude number on surface waves and
  heat transfer in inclined turbulent open channel water flows.
\newblock {\em Int. J. Heat \& Mass Transfer}, 46(20):3765 -- 3775, 2003.

\bibitem[FST86]{FST}
Uriel Frisch, Zhen-Su She, and Olivier Thual.
\newblock Viscoelastic behaviour of cellular solutions to the
  {K}uramoto-{S}ivashinsky model.
\newblock {\em J. Fluid Mech.}, 168:221--240, 1986.

\bibitem[Gar93]{G}
Robert~A. Gardner.
\newblock On the structure of the spectra of periodic travelling waves.
\newblock {\em J. Math. Pures Appl. (9)}, 72(5):415--439, 1993.

\bibitem[Gar97]{G2}
Robert~A. Gardner.
\newblock Spectral analysis of long wavelength periodic waves and applications.
\newblock {\em J. Reine Angew. Math.}, 491:149--181, 1997.

\bibitem[GH90]{GH}
John Guckenheimer and Philip Holmes.
\newblock {\em Nonlinear oscillations, dynamical systems, and bifurcations of
  vector fields}, volume~42 of {\em Applied Mathematical Sciences}.
\newblock Springer-Verlag, New York, 1990.
\newblock Revised and corrected reprint of the 1983 original.

\bibitem[GZ98]{GZ}
Robert~A. Gardner and Kevin Zumbrun.
\newblock The gap lemma and geometric criteria for instability of viscous shock
  profiles.
\newblock {\em Comm. Pure Appl. Math.}, 51(7):797--855, 1998.

\bibitem[HC87]{HC}
Shyh~Hong Hwang and Hsueh-Chia Chang.
\newblock Turbulent and inertial roll waves in inclined film flow.
\newblock {\em Phys. Fluids}, 30(5):1259--1268, 1987.

\bibitem[HZ06]{HuZ}
Jeffrey Humpherys and Kevin Zumbrun.
\newblock An efficient shooting algorithm for {E}vans function calculations in
  large systems.
\newblock {\em Phys. D}, 220(2):116--126, 2006.

\bibitem[Jef25]{Je}
Harold Jeffreys.
\newblock Lxxxiv. the flow of water in an inclined channel of rectangular
  section.
\newblock {\em Philos. Mag. Ser. 6}, 49(293):793--807, 1925.

\bibitem[JNRZ14]{JNRZ2}
Mathew~A. Johnson, Pascal Noble, L.~Miguel Rodrigues, and Kevin Zumbrun.
\newblock Behavior of periodic solutions of viscous conservation laws under
  localized and nonlocalized perturbations.
\newblock {\em Invent. Math.}, 197(1):115--213, 2014.

\bibitem[JNRZ15]{JNRZ1}
Mathew~A. Johnson, Pascal Noble, L.~Miguel Rodrigues, and Kevin Zumbrun.
\newblock Spectral stability of periodic wave trains of the {K}orteweg-de
  {V}ries/{K}uramoto-{S}ivashinsky equation in the {K}orteweg-de {V}ries limit.
\newblock {\em Trans. Amer. Math. Soc.}, 367(3):2159--2212, 2015.

\bibitem[JZ10]{JZ2}
Mathew~A. Johnson and Kevin Zumbrun.
\newblock Nonlinear stability of periodic traveling wave solutions of systems
  of viscous conservation laws in the generic case.
\newblock {\em J. Differential Equations}, 249(5):1213--1240, 2010.

\bibitem[JZ11]{JZ1}
Mathew~A. Johnson and Kevin Zumbrun.
\newblock Nonlinear stability of periodic traveling-wave solutions of viscous
  conservation laws in dimensions one and two.
\newblock {\em SIAM J. Appl. Dyn. Syst.}, 10(1):189--211, 2011.

\bibitem[JZ12]{JZ4}
Mathew~A. Johnson and Kevin Zumbrun.
\newblock Convergence of {H}ill's method for nonselfadjoint operators.
\newblock {\em SIAM J. Numer. Anal.}, 50(1):64--78, 2012.

\bibitem[JZN11]{JZN}
Mathew~A. Johnson, Kevin Zumbrun, and Pascal Noble.
\newblock Nonlinear stability of viscous roll waves.
\newblock {\em SIAM J. Math. Anal.}, 43(2):577--611, 2011.

\bibitem[KSF84]{KSF}
Evegeni\c i~A. Kuznetsov, M.~D. Spector, and Gregory~E. Fal{\cprime}kovich.
\newblock On the stability of nonlinear waves in integrable models.
\newblock {\em Phys. D}, 10(3):379--386, 1984.

\bibitem[KT75]{KT}
Yoshiki Kuramoto and Toshio Tsuzuki.
\newblock On the formation of dissipative structures in reaction-diffusion
  systems, reductive perturbation approach.
\newblock {\em Prog. Theor. Phys.}, 54(3):687--699, 1975.

\bibitem[Kur84]{K}
Yoshiki Kuramoto.
\newblock {\em Chemical oscillations, waves, and turbulence}, volume~19 of {\em
  Springer Series in Synergetics}.
\newblock Springer-Verlag, Berlin, 1984.

\bibitem[Mie97a]{M1}
Alexander Mielke.
\newblock Instability and stability of rolls in the {S}wift-{H}ohenberg
  equation.
\newblock {\em Comm. Math. Phys.}, 189(3):829--853, 1997.

\bibitem[Mie97b]{M2}
Alexander Mielke.
\newblock Mathematical analysis of sideband instabilities with application to
  {R}ayleigh-{B}\'enard convection.
\newblock {\em J. Nonlinear Sci.}, 7(1):57--99, 1997.

\bibitem[Mie02]{M3}
Alexander Mielke.
\newblock The {G}inzburg-{L}andau equation in its role as a modulation
  equation.
\newblock In {\em Handbook of dynamical systems, {V}ol. 2}, pages 759--834.
  North-Holland, Amsterdam, 2002.

\bibitem[Nob06]{N}
Pascal Noble.
\newblock On the spectral stability of roll-waves.
\newblock {\em Indiana Univ. Math. J.}, 55(2):795--848, 2006.

\bibitem[NR13]{NR2}
Pascal Noble and L.~Miguel Rodrigues.
\newblock Whitham's modulation equations and stability of periodic wave
  solutions of the {K}orteweg-de {V}ries-{K}uramoto-{S}ivashinsky equation.
\newblock {\em Indiana Univ. Math. J.}, 62(3):753--783, 2013.

\bibitem[OZ10]{OZ3}
Myunghyun Oh and Kevin Zumbrun.
\newblock Stability and asymptotic behavior of periodic traveling wave
  solutions of viscous conservation laws in several dimensions.
\newblock {\em Arch. Ration. Mech. Anal.}, 196(1):1--20, 2010.
\newblock Erratum: \emph{Arch. Ration. Mech. Anal.}, 196(1):21-23, 2010.

\bibitem[PSU07]{PSU}
Robert~L. Pego, Guido Schneider, and Hannes Uecker.
\newblock Long-time persistence of {K}orteweg-de {V}ries solitons as transient
  dynamics in a model of inclined film flow.
\newblock {\em Proc. Roy. Soc. Edinburgh Sect. A}, 137(1):133--146, 2007.

\bibitem[PZ04]{PZ}
Ramon Plaza and Kevin Zumbrun.
\newblock An {E}vans function approach to spectral stability of small-amplitude
  shock profiles.
\newblock {\em Discrete Contin. Dyn. Syst.}, 10(4):885--924, 2004.

\bibitem[RG12]{RG1}
Ga\"el~L. Richard and Sergey~L. Gavrilyuk.
\newblock A new model of roll waves: comparison with {B}rock's experiments.
\newblock {\em J. Fluid Mech.}, 698:374--405, 2012.

\bibitem[RG13]{RG2}
Ga\"el~L. Richard and Sergey~L. Gavrilyuk.
\newblock The classical hydraulic jump in a model of shear shallow-water flows.
\newblock {\em J. Fluid Mech.}, 725:492--521, 2013.

\bibitem[Rod13]{R}
L.~Miguel Rodrigues.
\newblock {\em Asymptotic stability and modulation of periodic wavetrains,
  general theory \& applications to thin film flows}.
\newblock Habilitation {\`a} diriger des recherches, Universit\'e Lyon 1, 2013.

\bibitem[RZ]{RZ}
L.~Miguel Rodrigues and Kevin Zumbrun.
\newblock Periodic-coefficient damping estimates, and stability of
  large-amplitude roll waves in inclined thin film flow.
\newblock In preparation.

\bibitem[Sch96]{S2}
Guido Schneider.
\newblock Diffusive stability of spatial periodic solutions of the
  {S}wift-{H}ohenberg equation.
\newblock {\em Comm. Math. Phys.}, 178(3):679--702, 1996.

\bibitem[Sch98]{S1}
Guido Schneider.
\newblock Nonlinear diffusive stability of spatially periodic
  solutions---abstract theorem and higher space dimensions.
\newblock In {\em Proceedings of the {I}nternational {C}onference on
  {A}symptotics in {N}onlinear {D}iffusive {S}ystems ({S}endai, 1997)},
  volume~8 of {\em Tohoku Math. Publ.}, pages 159--167, Sendai, 1998. Tohoku
  Univ.

\bibitem[Ser05]{Se}
Denis Serre.
\newblock Spectral stability of periodic solutions of viscous conservation
  laws: large wavelength analysis.
\newblock {\em Comm. Partial Differential Equations}, 30(1-3):259--282, 2005.

\bibitem[Siv77]{Si1}
Gregory~I. Sivashinsky.
\newblock Nonlinear analysis of hydrodynamic instability in laminar flames.
  {I}. {D}erivation of basic equations.
\newblock {\em Acta Astronaut.}, 4(11-12):1177--1206, 1977.

\bibitem[Siv83]{Si2}
Gregory~I. Sivashinsky.
\newblock Instabilities, pattern formation, and turbulence in flames.
\newblock {\em Ann. Rev. Fluid Mech.}, 15(1):179--199, 1983.

\bibitem[Spe88]{Sp}
M.~D. Spektor.
\newblock Stability of conoidal [cnoidal] waves in media with positive and
  negative dispersion.
\newblock {\em Zh. \`Eksper. Teoret. Fiz.}, 94(1):186--202, 1988.

\bibitem[SS01]{SS}
Bj{\"o}rn Sandstede and Arnd Scheel.
\newblock On the stability of periodic travelling waves with large spatial
  period.
\newblock {\em J. Differential Equations}, 172(1):134--188, 2001.

\bibitem[Win93]{Wi}
Htay~Aung Win.
\newblock Model equation of surface waves of viscous fluid down an inclined
  plane.
\newblock {\em J. Math. Kyoto Univ.}, 33(3):803--824, 1993.

\bibitem[YY03]{YY}
Jun Yu and Yi~Yang.
\newblock Evolution of small periodic disturbances into roll waves in channel
  flow with internal dissipation.
\newblock {\em Stud. Appl. Math.}, 111(1):1--27, 2003.

\bibitem[ZH98]{ZH}
Kevin Zumbrun and Peter Howard.
\newblock Pointwise semigroup methods and stability of viscous shock waves.
\newblock {\em Indiana Univ. Math. J.}, 47(3):741--871, 1998.

\bibitem[Zum07]{Z2}
Kevin Zumbrun.
\newblock Planar stability criteria for viscous shock waves of systems with
  real viscosity.
\newblock In {\em Hyperbolic systems of balance laws}, volume 1911 of {\em
  Lecture Notes in Math.}, pages 229--326. Springer, Berlin, 2007.

\bibitem[{Zum}09]{Z1}
Kevin {Zumbrun}.
\newblock {Numerical error analysis for Evans function computations: a
  numerical gap lemma, centered-coordinate methods, and the unreasonable
  effectiveness of continuous orthogonalization}.
\newblock {\em ArXiv e-prints}, April 2009.

\end{thebibliography}

\end{document}